\numberwithin{equation}{section}
\newcommand{\CC}{\mathbb{C}}
\newcommand{\PP}{\mathbb{P}}
\newcommand{\ZZ}{\mathbb{Z}}
\newcommand{\bP}{\mathbf{P}}
\newcommand{\cal}{\mathcal}
\def\cA{{\cal A}}
\def\cB{{\cal B}}
\def\cC{{\cal C}}
\def\cE{{\cal E}}
\def\cF{{\cal F}}
\def\cK{{\cal K}}
\def\cL{{\cal L}}
\def\cO{{\cal O}}
\def\cP{{\cal P}}
\def\and{\quad{\rm and}\quad}
 \DeclareMathOperator{\Ext}{Ext}
  \DeclareMathOperator{\Hom}{Hom}
\newtheorem{prop}{Proposition}[section]
\newtheorem{theo}[prop]{Theorem}
\newtheorem{lemm}[prop]{Lemma}
\newtheorem{coro}[prop]{Corollary}
\newtheorem{rema}[prop]{Remark}
\newtheorem{defi}[prop]{Definition}
\newtheorem{conj}[prop]{Conjecture}
\def\beq{\begin{equation}}
\def\eeq{\end{equation}}
\def\PP{\mathbb{P}}
\def\CC{\mathbb{C}}
\def\cO{\mathcal{O}}
\def\cExt{\mathcal{E}xt}
\def\Sym{\mathrm{Sym}}
\def\Ext{\mathrm{Ext}}
\def\H{\mathrm{H}}
\def\coker{\mathrm{coker} }
\def\Bl{\mathrm{Bl}}
\def\ad{\mathrm{ad}}
\def\det{\mathrm{det}}
\def\ch{\mathrm{ch}}
\def\td{\mathrm{td}}
\def\D{\mathrm{D}}
\title[Symmetric products and moduli of vector bundles of curves]{Symmetric products and moduli spaces of vector bundles of curves}
\author{Kyoung-Seog Lee and M. S. Narasimhan}
\address{Institute of the Mathematical Sciences of the Americas, University of Miami, 1365 Memorial Drive, Ungar 515, Coral Gables, FL 33146, USA}
\address{National Mathematics Initiative, Indian Institute of Science, Bangalore, India}
\address{Tata Institute of Fundamental Research, Bangalore Centre, India}
\begin{document}

\begin{abstract}
Let $X$ be a smooth projective curve of genus $g \geq 2$ and $M$ be the moduli space of rank 2 stable vector bundles on $X$ whose determinants are isomorphic to a fixed odd degree line bundle $L.$ There has been a lot of works studying the moduli and recently the bounded derived category of coherent sheaves on $M$ draws lots of attentions. It was proved that the derived category of $X$ can be embedded into the derived category of $M$ (cf. \cite{FK, Narasimhan1, Narasimhan2}). In this paper we prove that the derived category of the second symmetric product of $X$ can be embedded into derived category of $M$ when $X$ is non-hyperelliptic and $g \geq 16.$
\end{abstract}

\maketitle

\section{Introduction}

Let $X$ be a smooth projective curve of genus $g \geq 2$ and $M$ be the moduli space of rank 2 stable vector bundles on $X$ whose determinants are isomorphic to a fixed odd degree line bundle $L.$ The moduli space $M$ is a smooth projective Fano variety of dimension $3g-3,$ index $2$ (cf. \cite{Ramanan}). \\

There has been a lot of works studying the moduli and recently the bounded derived category of $M$ draws lots of attentions. It seems natural to expect that the derived category of $M$ will be closely related to $X.$ Let $E$ be a Poincar\'e bundle on $X \times M.$ Then one can define the Fourier-Mukai transform $\Phi_E : D(X) \to D(M)$ with kernel $E$ and a natural question is whether the functor $\Phi_E$ is fully-faithful or not. Indeed, it was proved that the derived category of $X$ can be embedded into the derived category of $M$ via $\Phi_E$ (cf. \cite{FK, Narasimhan1, Narasimhan2}). See \cite{BM, LM} for investigations of similar questions about moduli spaces of higher rank vector bundles on $X.$ Then a next task is to understand full semiorthogonal decomposition of $D(M).$ The second named author conjectured that the derived category of $M$ will have the following semiorthogonal decomposition. We were informed that Belmans, Galkin and Mukhopadhyay stated the same conjecture independently. See \cite{BGM, Lee} for more details.

\begin{conj}
The derived category of coherent sheaves on $M$ has the following semiorthogonal decomposition
$$ \D(M) = \langle \D(pt), \D(pt), \D(X), \D(X), \cdots, \D(X_k), \D(X_k), \cdots, \D(X_{g-1}) \rangle, $$
i.e., two copies of $\D(X_k)$ for $0 \leq k \leq g-2$ and one copy of $\D(X_{g-1}).$ Here $X_k$ means the $k$-th symmetric product of $X.$
\end{conj}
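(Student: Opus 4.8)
The plan is to split the conjecture into two logically independent problems: first, constructing for each $k$ a fully faithful Fourier--Mukai functor $\Phi_k\colon \D(X_k)\to\D(M)$ (together with a second copy obtained by twisting the kernel with the ample generator of $\Pic M$), and second, verifying that the resulting admissible subcategories are mutually semiorthogonal in the prescribed order and together generate $\D(M)$. The embeddings are the concrete building blocks, and I would attack them in increasing $k$, taking the known case $k=1$, namely the embedding $\Phi_E\colon \D(X)\hookrightarrow \D(M)$ of \cite{FK, Narasimhan1, Narasimhan2}, as the model to imitate for $k=2$, which is the first genuinely new case and the one isolated in the abstract.

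For the kernel of $\Phi_2$, let $\pi\colon X\times X\to X_2$ be the degree-two quotient, ramified along the diagonal. On $X\times X\times M$ I would form the two pullbacks $E^{(1)},E^{(2)}$ of the Poincar\'e bundle $E$ from the two copies of $X\times M$ and consider the sheaf $E^{(1)}\otimes E^{(2)}$, which is $\ZZ/2$-equivariant for the swap of the two factors of $X$. Taking invariants of its pushforward along $\pi\times\id_M$ produces an object $\cK=\bl(\pi\times\id_M)_*(E^{(1)}\otimes E^{(2)})\br^{\ZZ/2}\in\D(X_2\times M)$ whose fibre over a reduced divisor $[x+y]$ with $x\neq y$ is the rank-four bundle $E_x\otimes E_y$ on $M$, with a prescribed degeneration along the diagonal $X\hookrightarrow X_2$. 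By construction the associated transform satisfies $\Phi_\cK(\cO_{[x+y]})\cong E_x\otimes E_y$.

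To check full faithfulness I would invoke Bridgeland's criterion: it suffices that for all divisor classes $s=[x+y]$ and $s'=[x'+y']$ in $X_2$ one has $\Hom_{\D(M)}(\Phi_\cK\cO_s,\Phi_\cK\cO_{s'}[i])=0$ unless $s=s'$ and $0\le i\le \dim X_2=2$, together with $\Hom(\Phi_\cK\cO_s,\Phi_\cK\cO_s)=\CC$. Concretely this means controlling $\Ext^\bullet_M(E_x\otimes E_y,\,E_{x'}\otimes E_{y'})=H^\bullet\bl M,(E_x\otimes E_y)^\vee\otimes E_{x'}\otimes E_{y'}\br$. I would compute these groups by the mechanism used for $k=1$: express the universal bundles through the Hecke correspondence and push the computation down to $X$ and $X_2$, where the four point-constraints become Brill--Noether type conditions. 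For the simplicity statement, since $\End(E_x\otimes E_y)\cong\End E_x\otimes\End E_y$ and the $E_x$ are simple, the claim $H^0(M,\End(E_x\otimes E_y))=\CC$ reduces to showing that the product of the trace-free parts carries no extra global sections, which is a genuine global computation on $M$.

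The hard part will be exactly these cohomology estimates for the rank-four bundles $E_x\otimes E_y$ and their duals: unlike the $k=1$ case, the relevant sheaves on $M$ are not simple for formal reasons, and the required vanishing only becomes available once the curve is general enough. This is where the hypotheses enter. The non-hyperelliptic assumption guarantees that $X_2$ carries no contracted rational curve --- equivalently that the Abel--Jacobi map $X_2\to\Pic^2(X)$ is an embedding --- so that distinct divisor classes remain distinct after transform, while the explicit bound $g\ge 16$ is what finally forces the off-diagonal $\Ext$-groups to vanish and concentrates the on-diagonal ones in degrees $[0,2]$ with one-dimensional $\Hom$. For the full conjecture two further obstacles remain, both of which I expect to be substantially harder than any single embedding: proving semiorthogonality among the distinct blocks $\D(X_k)$ and between the two copies of each, which requires comparing the kernels for different $k$ simultaneously, and proving generation, i.e. that the images exhaust $\D(M)$ --- a statement whose only evident support is the matching of the Hodge numbers of $M$ with $\sum_k h^{\bullet,\bullet}(X_k)$, and which seems to demand a full analysis of the derived category of the Hecke correspondence rather than the pointwise criterion used above.
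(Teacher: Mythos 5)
Your proposal cannot be judged against a proof in the paper for a simple reason: the statement you were given is stated there as a \emph{conjecture}, and the paper does not prove it (nor claim to). What the paper proves is the two fragments your plan correctly isolates as the tractable part: (i) a fully faithful embedding $\Phi_F\colon \D(X_2)\to \D(M)$ for $X$ non-hyperelliptic of genus $g\ge 16$, and (ii) the compatibility statement $\D(M)=\langle \cA,\cB\rangle$ with $\cA\simeq\langle \D(pt),\D(X),\D(X_2)\rangle$. For (i) your route is essentially the paper's. Your kernel $\cK=\bigl((\pi\times\id_M)_*(E^{(1)}\otimes E^{(2)})\bigr)^{S_2}$ agrees with the paper's bundle $F$: the paper defines $F$ as the descent of $\widetilde F=\ker\bigl(p_{13}^*E\otimes p_{23}^*E\to \wedge^2 E|_{\Delta\times M}\bigr)$ and shows (Proposition 2.3, via the decomposition $\D([X^2/S_2]\times M)=\langle \D(X\times M)\otimes\zeta,\,\D(X_2\times M)\rangle$) that this is exactly the projection of the equivariant object onto the $\D(X_2\times M)$ component, which is what your invariant pushforward computes. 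The fully-faithfulness criterion (Bondal--Orlov, which is what you are calling Bridgeland's criterion) and the computation of $\Ext^\bullet_M(F_{x,y},F_{z,w})$ via the Hecke correspondence $\varphi\colon \PP E_x\to M_0$ are likewise the paper's strategy. One step your sketch glosses over: on the diagonal the kernel is \emph{not} of the form $E_x\otimes E_x$ but a nonsplit extension $0\to\wedge^2E_x\to F_{x,x}\to \Sym^2E_x\to 0$, so simplicity of $F_{x,x}$ does not reduce to a statement about trace-free parts; the paper proves it by a separate extension argument in the style of Narasimhan--Ramanan, and the hardest cohomology computation in the paper is precisely the diagonal one, $\H^i(M,E_x\otimes E_x\otimes E_x^*\otimes E_x^*)$.

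Two concrete corrections to your framing of the hypotheses, plus one miscalibration. First, the non-hyperelliptic assumption has nothing to do with $X_2\to\Pic^2(X)$ being an embedding or with distinguishing the bundles $E_x\otimes E_y$ (that is handled by the $\Ext$-vanishing itself); it is needed because all the cohomology computations are run on the modular desingularization $\psi\colon H_0\to M_0$ of \cite{NR;78} and its conic bundle over $Z=\psi^{-1}(M_0\setminus\cK_0)$, which is constructed only for non-hyperelliptic curves. Second, the bound $g\ge 16$ is not a generic ``large genus'' vanishing condition but a window argument: Sommese vanishing for the $(g-1)$-ample bundles $E_x\otimes E_y\otimes E_z\otimes E_w$ kills $\H^i$ for $i\ge g+15$, while the Hecke/desingularization analysis computes $\H^i$ only for $i\le 2g-2$ because the excluded locus $\varphi^{-1}(\cK_0)$ has codimension $2g-1$; one needs the two ranges to cover all degrees, i.e.\ $g+14\le 2g-2$. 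Finally, you are too pessimistic about cross-block semiorthogonality: the paper proves, by the same vanishing techniques (namely $\H^i(M,F_{x,y}^*)=\H^i(M,F_{x,y}^*\otimes E_z)=0$ for all $i$), that $\langle \cO,\Phi_E(\D(X))\rangle$ lies in the orthogonal of $\Phi_F(\D(X_2))$, which is how (ii) above is obtained; it is only the semiorthogonality involving the conjectural higher blocks $\D(X_k)$, $k\ge 3$, the second ($\theta$-twisted) copies, and the generation statement that remain genuinely open --- there your assessment is accurate, and no proof exists in the paper or elsewhere.
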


It turns out that the motive of $M$ has a motivic decomposition which is compatible with the above conjectural semiorthogonal decomposition. See \cite{GL, Lee} for precise statement and more details. Therefore a natural question is whether derived categories of symmetric products of $X$ can be embedded into the derived category of $M.$ In this paper we prove the following result.

\begin{theo}
If $X$ be a non-hyperelliptic curve with genus $g \geq 16,$ then $\D(X_2)$ can be embedded into $\D(M).$
\end{theo}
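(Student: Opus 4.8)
The plan is to construct an explicit Fourier--Mukai type functor from $\D(X_2)$ to $\D(M)$ and to verify fully-faithfulness by means of Bondal--Orlov type cohomological vanishing. Recall that fully-faithfulness of a Fourier--Mukai transform with kernel $P$ on $X_2 \times M$ is equivalent to the conditions
\[
\Hom_{\D(M)}\!\bl \Phi_P(\cO_{p}),\, \Phi_P(\cO_{q})[i] \br =
\begin{cases} \CC & \text{if } p=q \text{ and } i=0,\\ 0 & \text{otherwise},\end{cases}
\]
ranging over points $p,q \in X_2$ and all $i \in \ZZ$. The natural kernel to use is built from the Poincar\'e bundle $E$ on $X \times M$: a point of $X_2$ is an effective divisor $x+y$ of degree $2$ on $X$, and the associated object on $M$ should be (the restriction to $M$ of) the rank $4$ bundle $E_x \oplus E_y$, or more canonically the sheaf $E|_{(x+y)\times M}$ obtained by pulling back $E$ along the universal degree-$2$ subscheme $Z \subset X \times X_2$. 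Concretely, I would let $q\colon Z \to X_2$ and $r\colon Z \to X$ be the two projections from the universal subscheme, form $\sF \defeq (r \times \id_M)^{*} E$ on $Z \times_{X_2}(X_2\times M)$, and take as the Fourier--Mukai kernel the pushforward of $\sF$ to $X_2 \times M$ along $q \times \id_M$.

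First I would reduce the point-computation to a computation on $X$ itself. For a point $p = x+y \in X_2$ (with $x\neq y$ generic), $\Phi_P(\cO_p)$ is the rank $4$ bundle $E_x \oplus E_y$ on $M$, where $E_x = E|_{\{x\}\times M}$. Hence the $\Ext$-groups decompose as a sum of four terms $\Ext^{\bullet}_M(E_{x}, E_{x'})$ etc., and one is reduced to understanding $\Ext^{\bullet}_M(E_a, E_b)$ for points $a,b \in X$. These groups are exactly what control the fully-faithfulness of the original embedding $\Phi_E\colon \D(X)\to \D(M)$, so I would extract the needed vanishing and $\Hom$-computations from the proofs in \cite{Narasimhan1, Narasimhan2}: namely that $\Ext^0_M(E_a,E_a)=\CC$, that $\Ext^i_M(E_a,E_a)=0$ for $i>0$ up to the relevant range, and that the off-diagonal groups $\Ext^{\bullet}_M(E_a,E_b)$ for $a\neq b$ govern $\Hom_{\D(X)}(\cO_a,\cO_b[\bullet])$, which vanishes. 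The genus hypothesis $g\geq 16$ enters precisely to guarantee enough vanishing: the relevant $\Ext$-groups on $M$ are computed via the universal bundle's cohomology on fibres, and the estimates on these cohomology groups (via Hirzebruch--Riemann--Roch on $M$ of dimension $3g-3$, or via restriction to Hecke curves as in the original fully-faithfulness proofs) only give the sharp vanishing when $g$ is large.

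The genuinely new difficulty, and what I expect to be the main obstacle, is the behaviour along the diagonal $\Delta \subset X_2$, i.e.\ the non-reduced divisors $2x$. There the kernel sheaf $\sF$ is not simply $E_x \oplus E_x$ but a non-split extension encoding the first-order data of $E$ at $x$, so $\Phi_P(\cO_{2x})$ is a length-$2$ family whose endomorphisms and higher $\Ext$'s must be computed using the jet bundle of $E$ along $X$. Separating diagonal from off-diagonal contributions, and proving that the self-$\Hom$ of $\Phi_P(\cO_{2x})$ is exactly $\CC$ while all higher $\Ext$'s vanish, requires a careful analysis of the extension class together with a spectral-sequence or filtration argument relating $\Ext^{\bullet}_M(E_x^{(1)}, E_x^{(1)})$ (where $E_x^{(1)}$ is the first jet) to the already-understood groups $\Ext^{\bullet}_M(E_x,E_x)$. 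I would handle this by filtering $\sF|_{2x}$ with graded pieces $E_x$ and $E_x \otimes T^*_x X$ and chasing the resulting long exact sequences, using the self-duality and cohomological vanishing of the Poincar\'e bundle on $M$ to kill the potentially obstructing terms; it is exactly here that the bound $g\geq 16$ must be large enough to absorb the extra jet contributions beyond what the rank-one (symmetric product $X_1 = X$) case required. Finally, I would assemble the diagonal and off-diagonal computations to verify the Bondal--Orlov criterion in full and conclude that $\Phi_P$ is fully-faithful, hence an embedding $\D(X_2)\hookrightarrow \D(M)$.
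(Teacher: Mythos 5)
Your choice of Fourier--Mukai kernel is where the argument breaks, and it breaks irreparably. Taking the pushforward of $(r\times \id_M)^*E$ along the universal degree-$2$ subscheme produces, at a reduced point $p=x+y\in X_2$, the \emph{direct sum} $E_x\oplus E_y$. This object is decomposable, hence never simple: $\Hom_M(E_x\oplus E_y,\,E_x\oplus E_y)\supseteq \Hom_M(E_x,E_x)\oplus\Hom_M(E_y,E_y)=\CC^2$, so the first Bondal--Orlov condition $\Hom_M(\Phi_P(\cO_p),\Phi_P(\cO_p))=\CC$ already fails. Worse, orthogonality fails for distinct points of $X_2$ sharing a point of $X$: for $p=x+y$ and $q=x+w$ with $y\neq w$ one has $\Hom_M(\Phi_P(\cO_p),\Phi_P(\cO_q))\supseteq \Hom_M(E_x,E_x)=\CC\neq 0$. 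No amount of vanishing extracted from the rank-one case, and no genus bound, can repair this; the direct-sum kernel simply cannot give a fully faithful functor, so your main reduction (to the groups $\Ext^\bullet_M(E_a,E_b)$ that controlled $\Phi_E$) is a reduction of the wrong problem.

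The paper's kernel is instead built from the \emph{tensor} product: one takes $\widetilde F\subset p_{13}^*E\otimes p_{23}^*E$ on $X\times X\times M$ defined by the elementary modification $0\to\widetilde F\to p_{13}^*E\otimes p_{23}^*E\to \bigwedge^2E|_{\Delta\times M}\to 0$, and descends it through the $S_2$-action to a bundle $F$ on $X_2\times M$, so that $F_{x,y}\cong E_x\otimes E_y$ for $x\neq y$ and $F_{x,x}$ is an extension of $\Sym^2E_x$ by $\bigwedge^2E_x$. The tensor structure couples the two points: now $\Hom_M(F_{x,y},F_{x,w})=\H^0(M,E_x\otimes E_w\otimes E_x^*\otimes E_y^*)$, and this vanishes because one of the four points ($y$) differs from all the others. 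Establishing this --- i.e.\ computing $\H^i(M,E_x\otimes E_y\otimes E_z^*\otimes E_w^*)$ for all point configurations and all $i$ --- is the real work of the paper, and it is substantially harder than the jet/filtration analysis you sketch for the diagonal: it requires Hecke transforms, the Narasimhan--Ramanan desingularization of $M_0$ (whence the non-hyperellipticity hypothesis), conic-bundle geometry over it, and Sommese's vanishing theorem for $(g-1)$-ample bundles (whence $g\geq 16$). Your instinct that the diagonal of $X_2$ is the delicate locus is reasonable, but in the correct framework the diagonal object is $F_{x,x}$ (graded pieces $\bigwedge^2E_x$ and $\Sym^2E_x$), not the first jet of $E$ (graded pieces $E_x$ and $E_x\otimes T_x^*X$), and simplicity of $F_{x,x}$ is proved from the nontriviality of that extension together with the fourth-tensor-power cohomology computations.
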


In order to prove the above theorem, we construct a vector bundle $F$ on $X_2 \times M$ which becomes a Fourier-Mukai kernel. On the way of proving the above theorem, we can see that $F_{x,y}$ and $F_{z,w}$ are distinct vector bundles on $M$ when $(x,y)$ and $(z,w)$ are distinct points in $X_2.$

\begin{prop}
There exist a vector bundle $F$ on $X_2 \times M$ such that $X_2$ is a parameter space of a family of vector bundle $F_{x,y}$ on $M$ where $(x,y) \in X_2.$
\end{prop}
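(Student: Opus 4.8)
The plan is to realise $F$ as a pushforward of the Poincaré bundle along the universal family of length-two subschemes of $X$. For a smooth curve the symmetric product $X_2$ is canonically the Hilbert scheme $\Hilb^2(X)$, and as such carries a universal subscheme $\cZ \subset X \times X_2$ that is finite and flat of degree $2$ over $X_2$; set-theoretically the fibre of $\cZ \to X_2$ over a point $(x,y)$ is the divisor $x+y$. Writing $q \colon X \times X_2 \times M \to X \times M$ and $\pi \colon X \times X_2 \times M \to X_2 \times M$ for the two projections and $E$ for a Poincaré bundle on $X \times M$, I would set
\[
F \defeq \pi_*\bigl( (q^{*}E)|_{\cZ \times M} \bigr),
\]
where the pushforward is taken along the finite flat degree-two morphism $\cZ \times M \to X_2 \times M$ obtained from $\cZ \to X_2$ by base change.

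First I would check that $F$ is locally free of rank $4$. Since $E$ is locally free of rank $2$, its restriction to $\cZ \times M$ is locally free of rank $2$, and the pushforward of a locally free sheaf along a finite flat morphism is again locally free, with rank multiplied by the degree; hence $F$ is locally free of rank $4$ on $X_2 \times M$. In particular $F$ is flat over $X_2$, so the restrictions $F_{x,y} \defeq F|_{\{(x,y)\} \times M}$ constitute a flat family of rank-$4$ bundles on $M$ parametrised by $X_2$, which is the assertion. Computing fibres makes the family explicit: over a reduced divisor $(x,y)$ with $x \neq y$ one finds $F_{x,y} \cong E_x \oplus E_y$, where $E_x \defeq E|_{\{x\}\times M}$, while over a point of the diagonal $\Delta \subset X_2$ corresponding to $2x$ the bundle $F_{2x}$ is the restriction of $E$ to the first-order neighbourhood of $x$, sitting in a (generally non-split) self-extension $0 \to E_x \to F_{2x} \to E_x \to 0$.

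Having the family in hand, I would next confirm that $X_2$ really is its parameter space, that is, that $F_{x,y} \cong F_{z,w}$ forces $(x,y) = (z,w)$; this is the distinctness recorded in the remark before the statement, and it carries the genuine content. The inputs I would draw on are the injectivity of $x \mapsto E_x$ and the simplicity, hence indecomposability, of each $E_x$ as a bundle on $M$, both of which follow from the fully faithfulness of $\Phi_E \colon \D(X) \to \D(M)$ proved in \cite{FK, Narasimhan1, Narasimhan2} (a fully faithful functor is injective on isomorphism classes, and it identifies $\Hom_M(E_x,E_x)$ with $\Hom_X(\cO_x,\cO_x) = \CC$). Away from the diagonal both bundles are direct sums of indecomposables, so Krull--Schmidt reduces the comparison to the equality $\{E_x,E_y\} = \{E_z,E_w\}$ of unordered pairs, and injectivity then yields $\{x,y\} = \{z,w\}$.

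The main obstacle is the diagonal. There $F_{2x}$ is a self-extension rather than a direct sum, so Krull--Schmidt does not apply directly, and I must show separately that $F_{2x}$ is indecomposable, that it is never isomorphic to a split bundle $E_y \oplus E_z$ coming from a reduced divisor, and that $F_{2x} \cong F_{2x'}$ forces $x = x'$. I expect to control all three through the endomorphism algebra of $F_{2x}$ and the extension class of the sequence above: this class is the image of a tangent vector under the Kodaira--Spencer map $T_xX \to \Ext^1_M(E_x,E_x)$, which full faithfulness identifies with the isomorphism $\Ext^1_X(\cO_x,\cO_x) \cong \CC$, so it is nonzero. Non-splitting forces the endomorphism algebra to be local (no nontrivial idempotents), giving indecomposability and distinguishing $F_{2x}$ from the split bundles by the dimension of their endomorphism spaces; and the injectivity of $x \mapsto F_{2x}$ will follow from that of $x \mapsto E_x$ together with the injectivity of the Kodaira--Spencer assignment.
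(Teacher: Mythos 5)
Your construction is correct as a proof of the literal existence statement, but it produces a genuinely different bundle from the paper's, and the difference matters. The paper obtains $F$ by $S_2$-equivariant descent: on $X \times X \times M$ it takes $\widetilde F = \ker\bigl(p_{13}^*E \otimes p_{23}^*E \to \bigwedge^2 E|_{\Delta \times M}\bigr)$, checks that the stabilizers of the $S_2$-action act trivially on $\widetilde F$ (the involution acts by $-1$ on the conormal bundle of the diagonal), and descends; the fibers are $F_{x,y} \cong E_x \otimes E_y$ for $x \neq y$ and an extension $0 \to \bigwedge^2 E_x \to F_{x,x} \to \Sym^2 E_x \to 0$ on the diagonal, and the distinctness of fibers is later extracted from the $\Ext$-vanishing computations of Sections 4--5. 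Your pushforward along the universal length-two subscheme instead has fibers $E_x \oplus E_y$ and a self-extension of $E_x$; equivalently $F_{x,y} \cong \Phi_E(\cO_{x+y})$, an observation that would let you compress your whole distinctness argument, since a fully faithful functor is injective on isomorphism classes and $\cO_{x+y} \cong \cO_{z+w}$ forces $x+y = z+w$ as divisors---this subsumes the Krull--Schmidt step and the diagonal endomorphism-algebra analysis (non-splitting of $F_{2x}$ is immediate because $\Phi_E$ induces an isomorphism $\Ext^1_X(\CC(x),\CC(x)) \to \Ext^1_M(E_x,E_x)$ and $\cO_{2x}$ is non-split on $X$). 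What your route buys is elementarity (no equivariant descent) and a cleaner distinctness proof; what it loses is the object the rest of the paper needs. The Proposition refers to the bundle that ``becomes a Fourier--Mukai kernel'' for the embedding $\D(X_2) \to \D(M)$, and your $F$ cannot play that role: for distinct points of $X_2$ sharing a point of $X$ one has $\Hom_M(E_x \oplus E_y,\, E_y \oplus E_z) \supseteq \End_M(E_y) \neq 0$, violating the Bondal--Orlov orthogonality, whereas it is exactly the tensor-product fibers $E_x \otimes E_y$ of the paper's $F$ whose cohomology is computed later to verify that criterion.
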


We also need to compute various cohomology groups in order to prove the embedding.

\begin{theo}
Let $X$ be a non-hyperelliptic curve with genus $g \geq 16.$ The we have the following. \\
(1) If one of the four points $x,y,z,w \in X$ is different from all the others, then we have
$$ \H^i(M, E_x \otimes E_y \otimes E^*_z \otimes E^*_w)=0 $$
for every $i \in \ZZ.$ \\
(2) For two distinct points $x,z \in X$ we have
\begin{displaymath}
\H^i(M, E_x \otimes E_x \otimes E^*_z \otimes E^*_z)=\left \{ {\begin{array}{ll} 
\CC & \textrm{if $i=0$,} \\ 
\CC^2 & \textrm{if $i=1$,} \\ 
\CC & \textrm{if $i=2$,} \\ 
0 & \textrm{if $i \geq 3.$} 
\end{array}}
\right.
\end{displaymath}
(3) For $x \in X$ we have 
\begin{displaymath}
\H^i(M, E_x \otimes E_x \otimes E^*_x \otimes E^*_x)=\left \{ {\begin{array}{ll} 
\CC^2 & \textrm{if $i=0$,} \\ 
\CC^3 & \textrm{if $i=1$,} \\ 
\CC & \textrm{if $i=2$,} \\ 
0 & \textrm{if $i \geq 3.$} 
\end{array}}
\right.
\end{displaymath}
\end{theo}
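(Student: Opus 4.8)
The plan is to reduce all three statements to the cohomology of adjoint ($PGL_2$-)bundles on $M$, where the single-point case is already known, and to treat the genuinely new multi-point cohomology by the Hecke correspondence. The basic input is the rank-two decomposition $E_x\otimes E_x^*\cong\sO_M\oplus\ad E_x$ together with $\ad E_x\cong\Sym^2E_x\otimes(\det E_x)^{-1}$ and the self-duality $\ad E_x\cong(\ad E_x)^*$. A crucial normalization is that $\det E_x$ is independent of $x$: since $M$ is Fano we have $\Pic(X\times M)=\pi_X^*\Pic X\oplus\pi_M^*\Pic M$, hence $\det E=\pi_X^*L\otimes\pi_M^*\cN$ for some $\cN\in\Pic M$, so $\det E_x\cong\cN$ for every $x$. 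Expanding $E_x^{\otimes2}\otimes(E_z^*)^{\otimes2}=(\Sym^2E_x\oplus\det E_x)\otimes(\Sym^2E_z^*\oplus\det E_z^*)$ and using $\det E_x\cong\det E_z\cong\cN$ to collapse every determinant twist gives
\[
E_x\otimes E_x\otimes E_z^*\otimes E_z^*\;\cong\;(\ad E_x\otimes\ad E_z)\oplus\ad E_x\oplus\ad E_z\oplus\sO_M ,
\]
and, in the diagonal case $x=z$, the splitting $(\End E_x)^{\otimes2}\cong 2\,\sO_M\oplus 3\,\ad E_x\oplus W_x$, where $W_x$ is the rank-$5$ summand of $\Sym^2\ad E_x$ (using $\wedge^2\ad E_x\cong\ad E_x$ and $\Sym^2\ad E_x\cong\sO_M\oplus W_x$).

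I would then quote the single-point values from \cite{Narasimhan1, Narasimhan2}: $M$ Fano gives $\H^*(M,\sO_M)=\CC$ in degree $0$, and full-faithfulness of $\Phi_E$ gives $\H^*(M,\End E_x)=\Ext^*_X(\sO_x,\sO_x)=\CC\oplus\CC[-1]$, whence $\H^*(M,\ad E_x)=\CC$ concentrated in degree $1$. Feeding these into the decompositions above, statement (2) becomes equivalent to the single assertion that $\H^i(M,\ad E_x\otimes\ad E_z)$ is $\CC$ for $i=2$ and $0$ otherwise (for $x\neq z$), and, after cancelling the known $\sO_M$ and $\ad E_x$ contributions, statement (3) reduces to the twin assertion $\H^i(M,W_x)=\CC$ for $i=2$ and $0$ otherwise. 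Thus the whole theorem is powered by two parallel "$\CC$ in degree $2$" computations, plus the vanishing in (1).

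For the two degree-$2$ computations I would use the Hecke correspondence. Fixing $p$, let $q\colon\PP(E_p)\to M$ be the associated $\PP^1$-bundle with tautological sequence $0\to\sO(-1)\to q^*E_p\to Q\to0$, and recall that $\PP(E_p)$ is simultaneously a $\PP^1$-bundle over the moduli space $M'$ of determinant $L(-p)$, under which $Q$ is governed by the universal bundle on $M'$. Pulling $\ad E_x\otimes\ad E_z$ (resp. $W_x$) back, filtering by the tautological sequence and pushing forward along the fibres kills the $\sO(-1)$-pieces and transfers the cohomology to explicit twists on $M'$; iterating reduces it to cohomology on $X$ and $X\times X$, where a term of type $\H^1(X,\omega_X)\otimes\H^1(X,\omega_X)$ is what places the surviving class in degree $2$ and makes it one-dimensional. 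For part (1) I would isolate the distinguished point, say $x\notin\{y,z,w\}$, and apply the Hecke correspondence at $x$ to peel off the factor $E_x$; after filtering, the surviving term transfers to $M'$, where the distinctness of $x$ from $y,z,w$ forces the relevant twisted bundle to be cohomologically trivial in every degree. (Equivalently, $\H^*(M,E_x\otimes E_y\otimes E_z^*\otimes E_w^*)=\Ext^*_X(\sO_x,\Psi)$ for $\Psi$ the image of $E_y\otimes E_z^*\otimes E_w^*$ under the right adjoint of the fully faithful functor with kernel $E^*$, which exhibits the answer as governed by a complex on $X$ and reduces the vanishing to distinctness.)

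The hard part will be precisely these three items: controlling the (iterated) Hecke spectral sequences well enough to prove that all cohomology outside the predicted slots vanishes and that the surviving groups are exactly one-dimensional. This is where the hypotheses enter. The non-hyperelliptic assumption guarantees that the relevant evaluation and multiplication maps (and the Petri-type maps on $X$) have maximal rank, so that no spurious classes survive and $E_x\not\cong E_y$ for $x\neq y$; the bound $g\ge16$ makes the degrees of the auxiliary line bundles on the $\PP^1$-fibres and on $X$ large enough (respectively negative enough) to force the vanishing of the intermediate cohomology that would otherwise obstruct the collapse of the spectral sequences. I expect the bookkeeping of these twists across the iterated correspondence, rather than any single conceptual point, to be the main technical obstacle.
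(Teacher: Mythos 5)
Your opening reductions are sound and in fact parallel the paper: using the normalization $\det E_x\cong\theta$ for all $x$, the paper likewise rewrites $E_x\otimes E_x\otimes E_z^*\otimes E_z^*$ as $(E_x\otimes E_x^*)\otimes(E_z\otimes E_z^*)$ and splits off $\cO_M$ and $\ad$-summands, quoting $\H^*(M,\cO_M)$ and $\H^*(M,\ad E_x)$ from the Narasimhan--Ramanan computations. The genuine gap is in the engine you propose for the remaining pieces (the vanishing in (1) and the two ``$\CC$ in degree $2$'' computations). You assert that $\PP(E_p)$ is \emph{simultaneously a $\PP^1$-bundle} over $M$ and over the moduli space $M'$ of determinant $L(-p)$, and that one can iterate this transfer down to $X$ and $X\times X$. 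This is false: the Hecke morphism $\varphi:\PP E_p\to M_0$ is a $\PP^1$-fibration only over the \emph{stable} locus $M_0^s$; over the strictly semistable locus $\cK$ its fibers jump in dimension (up to $g-1$), $M_0$ is singular along $\cK$, and by Ramanan's theorem there is no Poincar\'e bundle on any open subset of $X\times M_0$, so there is no tautological sequence on the $M'$ side to filter by and push forward. Since $\cK$ has positive-dimensional preimage of codimension only $g-1$ in $\PP E_p$, the contribution of this locus cannot be ignored in the degree range you need; controlling it \emph{is} the theorem, not bookkeeping. Your fallback remark for (1) via the right adjoint of $\Phi_E$ is circular for the same reason: identifying $\Ext^*_X(\cO_x,\Phi^R(E_y\otimes E_z^*\otimes E_w^*))$ requires computing a pushforward of a triple tensor product of Poincar\'e bundles, which is exactly the quantity at issue.

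What the paper actually does to close this gap is: pass to the Narasimhan--Ramanan modular desingularization $\psi:H_0\to M_0$ (this is where the non-hyperelliptic hypothesis enters --- it is needed for this desingularization to exist, not for any Petri-type genericity), work on the associated conic bundle $\widetilde{\varphi}:\cC\to Z$ over $Z=\psi^{-1}(M_0\setminus\cK_0)$, and compute $\widetilde{\varphi}_*$ and $R^1\widetilde{\varphi}_*$ of the pulled-back bundles by explicit Mayer--Vietoris arguments on the degenerate conics $l_1\cup l_2$ lying over $\cK\setminus\cK_0$, using a concrete description of this locus via projective bundles $\PP(\cE_\pm^*)$, $\PP(\cF_\pm^*)$ over the Jacobian; the degree-$2$ classes in (2) and (3) arise from relative duality on the conic bundle (e.g.\ $\H^{i}(\cC,\omega_{\widetilde{\varphi}}^{\otimes 2})\cong\H^{i-1}(Z,W)$ with $W=R^1\widetilde{\varphi}_*\omega_{\widetilde{\varphi}}^{\otimes 2}$, pinned down by a Grothendieck--Riemann--Roch computation of $c_1(W)$), not from an $\H^1(X,\omega_X)\otimes\H^1(X,\omega_X)$ term. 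Finally, since the desingularization only covers $M_0\setminus\cK_0$, the paper needs the codimension of $\varphi^{-1}(\cK_0)$, which is $2g-1$, to be large enough that cohomology extension across it (valid for $i\leq 2g-3$) overlaps with the Sommese vanishing range $i\geq g+15$; this overlap is the actual role of $g\geq16$, again different from the role you assign it. Without an argument handling the semistable boundary, your plan does not produce a proof.
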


From the above computation and construction of $F,$ we can show the Fourier-Mukai transform with kernel $F$ gives us the desired embedding hence obtain Theorem 1.2. 

\begin{rema}
In the above Theorem, we assume that $X$ to be a non-hyperelliptic curve with genus $g \geq 16$ due to some technical reasons. However we do not need these conditions when we construct the vector bundle $F$ and the cohomology computation above is valid for many $i \in \ZZ$ for any curve $X$ with $g \geq 3.$ We conjecture that the cohomology computation above will be the same and the vector bundle $F$ will induce an embedding $\Phi_F : D(X_2) \to D(M)$ for any smooth projective curve $X$ with $g \geq 3.$
\end{rema}

Moreover, we can prove that the embedded copy of $D(X_2)$ form a part of semiorthogonal decomposition of $D(M)$ which is compatible with the copies constructed in \cite{Narasimhan1, Narasimhan2}.

\begin{theo}
Let $X$ be a non-hyperelliptic curve with $g \geq 16,$ there exist a semiorthogonal decomposition $\D(M)=\langle \mathcal{A}, \mathcal{B} \rangle$ whose component $\mathcal{A}$ has the following semiorthogonal decomposition. 
$$ \mathcal{A} = \langle \D(pt), \D(X), \D(X_2) \rangle $$
\end{theo}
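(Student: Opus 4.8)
The plan is to realize the three summands of $\mathcal{A}$ as images of three Fourier--Mukai functors and then verify the two semiorthogonality conditions that are not already contained in \cite{Narasimhan1, Narasimhan2}. Let $\Phi_0 \colon \D(pt) \to \D(M)$ be the functor $\CC \mapsto \cO_M$, let $\Phi_E \colon \D(X) \to \D(M)$ be the Fourier--Mukai functor with Poincar\'e kernel $E$, and let $\Phi_F \colon \D(X_2) \to \D(M)$ be the functor with kernel $F$ from Proposition 1.3. The functor $\Phi_0$ is fully faithful because $\cO_M$ is exceptional: $M$ is Fano, so $\H^i(M, \cO_M) = 0$ for $i > 0$ by Kodaira vanishing and $\Hom_M(\cO_M, \cO_M) = \CC$. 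The functor $\Phi_E$ is fully faithful by \cite{FK, Narasimhan1, Narasimhan2}, and $\Phi_F$ is fully faithful by Theorem 1.2. Since each kernel lives on a product of two smooth projective varieties, each functor admits both a left and a right adjoint, so the three images are admissible subcategories of $\D(M)$, which I denote $\D(pt), \D(X), \D(X_2)$ by abuse of notation.

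Next I would reduce the statement to a pair of cohomological vanishings. Writing $\mathcal{A} = \langle \D(pt), \D(X), \D(X_2)\rangle$ with the convention $\Hom^\bullet(\mathcal{A}_j, \mathcal{A}_i) = 0$ for $j > i$, the compatibility with \cite{Narasimhan1, Narasimhan2} means $\langle \D(pt), \D(X)\rangle$ is already a semiorthogonal pair, so the only new conditions are that the rightmost summand be left-orthogonal to the other two:
\[
\Hom^\bullet_M(\Phi_F(a), \cO_M) = 0, \qquad \Hom^\bullet_M(\Phi_F(a), \Phi_E(b)) = 0
\]
for all $a \in \D(X_2)$ and $b \in \D(X)$. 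Because skyscraper sheaves generate $\D(X_2)$ and $\D(X)$, and Hom-vanishing on generators propagates through the triangulated closures by d\'evissage, it suffices to test these on $a = \cO_{(x,y)}$ and $b = \cO_z$, where $\Phi_F(\cO_{(x,y)}) = F_{x,y}$ and $\Phi_E(\cO_z) = E_z$. Thus the two conditions become
\[
\H^\bullet(M, F_{x,y}^{*}) = 0 \qquad\text{and}\qquad \H^\bullet(M, F_{x,y}^{*} \otimes E_z) = 0
\]
for every $(x,y) \in X_2$ and every $z \in X$.

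The final step is to evaluate these groups using the explicit description of $F$. From the construction in Proposition 1.3 the fibre $F_{x,y}$ is assembled from the bundles $E_x$ and $E_y$ (with the special behaviour along the diagonal that produces the three cases of Theorem 1.4), so $F_{x,y}^{*}$ is an extension built from $E_x^{*} \otimes E_y^{*}$ and $F_{x,y}^{*} \otimes E_z$ from $E_x^{*} \otimes E_y^{*} \otimes E_z$. I would therefore reduce the two displayed vanishings to the cohomology of the two- and three-fold tensor products $\H^\bullet(M, E_x^{*} \otimes E_y^{*})$ and $\H^\bullet(M, E_x^{*} \otimes E_y^{*} \otimes E_z)$ on $M$, and compute these by the same Hecke-correspondence and Leray spectral sequence techniques that underlie Theorem 1.4, pushing the computation down to $X$, $X_2$ and products thereof. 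Once both groups vanish in all degrees, $\mathcal{A}$ is an admissible subcategory of the saturated category $\D(M)$, and setting $\mathcal{B} := {}^{\perp}\mathcal{A}$ yields the decomposition $\D(M) = \langle \mathcal{A}, \mathcal{B}\rangle$ with $\mathcal{A} = \langle \D(pt), \D(X), \D(X_2)\rangle$.

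I expect the main obstacle to be this last reduction: controlling $F_{x,y}^{*}$ and $F_{x,y}^{*}\otimes E_z$ as extensions precisely enough to turn the required vanishings into the lower-order analogues of Theorem 1.4, and then establishing those analogues in every cohomological degree. The delicate cases are the diagonal locus $x = y$ in $X_2$ and the coincidences $z \in \{x,y\}$, where the tensor products degenerate just as in cases (2) and (3) of Theorem 1.4, so that the naive vanishing must be verified against possible nonzero contributions exactly where the cohomology is most sensitive.
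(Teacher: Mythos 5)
Your proposal follows essentially the same route as the paper: the paper likewise takes the component $\langle \cO_M, \Phi_E(\D(X))\rangle$ of the semiorthogonal decomposition from \cite{Narasimhan1, Narasimhan2}, reduces the new orthogonality conditions to the vanishings $\H^\bullet(M, F_{x,y}^*)=0$ and $\H^\bullet(M, F_{x,y}^*\otimes E_z)=0$ tested on skyscraper sheaves, and proves exactly these (in its Propositions 6.1 and 6.2, splitting into the off-diagonal case $F_{x,y}^*=E_x^*\otimes E_y^*$ and the diagonal case $F_{x,x}^*$ handled via the extension by $\Sym^2 E_x^*$ and $\bigwedge^2 E_x^*$) using Sommese vanishing, Serre duality, and the restriction-to-$\PP E_w$ argument from the proof of its Proposition 4.11. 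The only difference is that you leave those cohomology computations as a sketch, but the reduction, the key vanishings, and the concluding admissibility argument are the same as in the paper.
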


And each component of the above decomposition does not admit a nontrivial semiorthogonal decomposition.

\begin{rema}
When $g \geq 2,$ it was proved that $\D(X)$ does not admit a nontrivial semiorthogonal decomposition in \cite{Okawa}. When $g \geq 3,$ it was proved that $\D(X_2)$ does not admit a nontrivial semiorthogonal decomposition in \cite{BGL}.
\end{rema}

\bigskip

\noindent
\textbf{Conventions}.
We will work over $\CC.$ For a variety $Y,$ we will use $\D(Y)$ to denote the bounded derived category of coherent sheaves on $Y.$ We often use the same notation to denote a sheaf on $Y$ (or a morphism from $Y$) and its restriction to an open subest of $Y$ if the meaning is clear from the context. In this paper, $X$ denotes a smooth projective curve of genus $g \geq 2$ and $X_k$ denotes the $k$-th symmetric product of $X.$ Let $\theta$ be the ample divisor on $M$ which generates the Picard group of $M.$ We will fix a normalized Poincar\'e bundle $E$ on $X \times M$ such that $\det(E_x) \cong \theta$ for each $x \in X.$

\bigskip

\noindent
\textbf{Acknowledgements}.
Part of this work was done when the first named author was a research fellow of KIAS, Young Scientist Fellow of IBS-CGP and visiting IISc. He was partially supported by IBS-R003-Y1. He thanks Gadadhar Misra for kind hospitality during his stay in IISc. He also thanks Tomas Gomez and Han-Bom Moon for helpful discussions about related projects. Last but not least, he thanks Ludmil Katzarkov and Simons Foundation for partially supporting this work via Simons Investigator Award-HMS.

\bigskip

\section{Construction of functors}

Let us consider the following diagram.

\begin{displaymath}
\xymatrix{ 
 & & X \times X \times M \ar[ld]_{p_{12}} \ar[d]^{p_{13}} \ar[rd]^{p_{23}} & & \\
 & X \times X \ar[ld] \ar[rd] & X \times M \ar[lld] \ar[rrd] & X \times M \ar[ld] \ar[rd] & \\
X & & X & & M }
\end{displaymath}

Then we have the following exact sequence.
$$ 0 \to \widetilde{F} \to p_{13}^*E \otimes p_{23}^*E \to {\bigwedge^2 E|}_{\Delta \times M} \to 0 $$

\begin{lemm}
The above vector bundle $\tilde{F}$ on $X \times X \times M$ descents to a vector bundle $F$ on $X_2 \times M.$
\end{lemm}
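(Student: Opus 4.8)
The plan is to realize $F$ as the descent of $\widetilde F$ along the quotient map. Since $X_2=(X\times X)/(\ZZ/2\ZZ)$ with the involution $\sigma$ that interchanges the two copies of $X$, we have $X_2\times M=(X\times X\times M)/(\ZZ/2\ZZ)$, where $\sigma$ swaps the two $X$-factors and fixes $M$. By Kempf's descent lemma, a $\ZZ/2\ZZ$-equivariant vector bundle on $X\times X\times M$ descends to a vector bundle on $X_2\times M$ precisely when, at every point, the stabilizer acts trivially on the fibre. The action of $\sigma$ is free away from the ramification locus $\Delta\times M$, so the whole problem reduces to producing a linearization of $\widetilde F$ and checking that $\sigma$ acts trivially on the fibre of $\widetilde F$ at each point $p=(x,x,m)\in\Delta\times M$.

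First I would linearize $\widetilde F$. Because $p_{13}\circ\sigma=p_{23}$, swapping the tensor factors gives a canonical isomorphism $\sigma^{*}(p_{13}^{*}E\otimes p_{23}^{*}E)\cong p_{13}^{*}E\otimes p_{23}^{*}E$, i.e.\ a $\ZZ/2\ZZ$-linearization of $V:=p_{13}^{*}E\otimes p_{23}^{*}E$. On the fixed locus the induced action on $V_{p}=E_{(x,m)}\otimes E_{(x,m)}$ is the honest transposition, acting by $+1$ on $\Sym^{2}E_{(x,m)}$ and by $-1$ on $\bigwedge^{2}E_{(x,m)}$. Since $v\wedge w=-w\wedge v$, the surjection $V\to\bigwedge^{2}E|_{\Delta\times M}$ is equivariant once the target carries the sign-twisted linearization; hence $\widetilde F=\ker(V\to\bigwedge^{2}E|_{\Delta\times M})$ is a $\ZZ/2\ZZ$-subsheaf and inherits a linearization. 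A local computation (an elementary modification of a rank-$4$ bundle along the smooth divisor $\Delta\times M$) shows that $\widetilde F$ is locally free of rank $4$.

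The crux is the fibre computation at $p=(x,x,m)$. Tensoring the defining sequence with $\kappa(p)$ and using $\Tor_{1}(V,\kappa(p))=0$ yields an exact sequence of $\ZZ/2\ZZ$-representations
$$ 0\to\Tor_{1}\!\big(\textstyle\bigwedge^{2}E|_{\Delta\times M},\kappa(p)\big)\to\widetilde F\otimes\kappa(p)\to\ker\!\big(V_{p}\to\textstyle\bigwedge^{2}E_{(x,m)}\big)\to 0. $$
The quotient is $\ker(V_{p}\to\bigwedge^{2}E_{(x,m)})=\Sym^{2}E_{(x,m)}$, on which $\sigma$ acts by $+1$. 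For the subspace, $\Tor_{1}(i_{*}N,\kappa(p))\cong N_{p}\otimes(I_{\Delta\times M}/I_{\Delta\times M}^{2})_{p}$ for $N=\bigwedge^{2}E$ with the sign-twisted linearization; here $\sigma$ acts by $-1$ on $N_{p}$ and, because the conormal direction is spanned by the leading term of $u_{1}-u_{2}$, by $-1$ on the conormal line, so the product is $+1$. Thus $\sigma$ acts by $+1$ on both the sub and the quotient, and by semisimplicity in characteristic $0$ it acts by $+1$ on all of $\widetilde F\otimes\kappa(p)$.

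Consequently the stabilizer acts trivially on every fibre over $\Delta\times M$, and Kempf's lemma produces the desired vector bundle $F$ on $X_2\times M$ with $q^{*}F\cong\widetilde F$, where $q=\pi\times\id_{M}$. The main obstacle is precisely the fibre computation of the previous paragraph: a priori the $(-1)$-eigenspace $\bigwedge^{2}E_{(x,m)}\subset V_{p}$ threatens descent, and the point is that this eigenspace is exactly what the surjection onto $\bigwedge^{2}E|_{\Delta\times M}$ removes, while the compensating $\Tor_{1}$ term re-enters with a $+1$ thanks to the cancellation of the two signs coming from $N$ and from the conormal bundle.
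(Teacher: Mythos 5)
Your proof is correct and takes essentially the same route as the paper: both verify Kempf's descent criterion over the diagonal by exhibiting the fibre of $\widetilde{F}$ at a point of $\Delta \times M$ as an extension of $\Sym^2 E_{(x,m)}$ (where $\sigma$ acts by $+1$) by $\bigwedge^2 E_{(x,m)} \otimes N^\vee_{\Delta}$ (where the two signs, from the wedge and from the conormal direction, cancel to give $+1$). Your $\Tor_1$ computation simply makes the paper's restriction sequence on $\Delta \times M$ explicit fibrewise, and your semisimplicity remark tightens the final step.
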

\begin{proof}
First, let us consider the point $(x_1, x_2) \in X \times X \setminus \Delta.$ Then the natural $S_2$-action on $X \times X \setminus \Delta$ lifts to $\tilde{F}.$ On the diagonal, the above sequence leads the following exact sequence. 
$$ 0 \to \bigwedge^2 E|_{\Delta \times M} \otimes N^\vee_{\Delta | X \times X \times M} \to \widetilde{F}|_{\Delta \times M} \to Sym^2 E |_{\Delta \times M} \to 0 $$
The permutation group $S_2$ acts on the conormal bundle by multiplying $(-1)$ and hence the stabilizer groups of the $S_2$-action on $\widetilde{F}$ are trivial. Therefore $\tilde{F}$ descents to a vector bundle $F$ on $X_2 \times M.$
\end{proof}

In general, we can use equivariant derived categories in order to construct functors. We know that there exist a fully faithful embedding $\D(X_{k}) \subset \D([X^{k}/S_k])$ from \cite{PVdB}. Then let us consider the following diagram.

\begin{displaymath}
\xymatrix{ 
 & & & X \times \cdots \times X \times M \ar[lld]_{p_{1,\cdots,k}} \ar[ld]_{p_{1,{k+1}}} \ar[d]^{p_{l,{k+1}}} \ar[rd]^{p_{k,{k+1}}} & & \\
& X^k \ar[ld] \ar[rrd] & X \times M \ar[lld] \ar[rrrd] & \cdots \ar[ld] \ar[rrd] & X \times M \ar[ld] \ar[rd] & \\
X &  & \cdots & X & & M }
\end{displaymath}

Let us consider the bundle $\tilde{E} = p_{1,{k+1}}^*E \otimes \cdots \otimes p_{k,{k+1}}^*E$ on $X^k \times M.$ There is a natural $S_k$-action on $X^k \times M$ and the bundle $\tilde{E}$ is a $S_k$-equivariant bundle. Therefore we have a natural functor $\D([X^{k}/S_k]) \to \D(M)$ which is the descent of the Fourier-Mukai transform $\Phi_{\tilde{E}} : \D(X^{k}) \to \D(M)$ (cf. \cite{KS}). By composing these two functors we obtain the functor $\D(X_{k}) \to \D(M).$

\begin{conj}
When $1 \leq k \leq g-1,$ the functor constructed above induce a fully faithful embedding $\D(X_k) \to \D(M).$
\end{conj}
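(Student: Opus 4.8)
The plan is to verify full faithfulness through the Bondal--Orlov criterion. Since $X_k$ and $M$ are smooth projective and the functor $\Phi \colon \D(X_k) \to \D(M)$ is of Fourier--Mukai type (the composite of the Polishchuk--Van den Bergh embedding $\D(X_k) \hookrightarrow \D([X^k/S_k])$ of \cite{PVdB} with the descent of $\Phi_{\tilde E}$, and equal to $\Phi_F$ for the bundle $F$ of Lemma 2.1 when $k=2$), it admits both adjoints and the skyscrapers $\sO_\xi$ at effective divisors $\xi \in X_k$ form a spanning class. Writing $F_\xi \defeq \Phi(\sO_\xi)$, which over the open stratum $\xi = x_1 + \cdots + x_k$ with distinct $x_i$ is the rank $2^k$ bundle $E_{x_1} \otimes \cdots \otimes E_{x_k}$ on $M$, full faithfulness is equivalent to the identifications
\begin{displaymath}
\Ext^i_M(F_\xi, F_{\xi'}) \cong \Ext^i_{X_k}(\sO_\xi, \sO_{\xi'}) = \begin{cases} \wedge^i T_\xi X_k & \xi = \xi', \\ 0 & \xi \neq \xi', \end{cases}
\end{displaymath}
for all $i$ and all pairs $\xi, \xi'$. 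Concretely one must show that $F_\xi$ is simple, that the $\Ext$-groups between distinct $F_\xi$ vanish in every degree, and that the self-$\Ext$ is concentrated in degrees $0, \dots, k$ with the binomial dimensions $\binom{k}{i}$.

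Next I would reduce each $\Ext^i_M(F_\xi, F_{\xi'}) = \H^i(M, F_\xi^\vee \otimes F_{\xi'})$ to the cohomology of tensor products of Poincar\'e fibres. Away from the diagonals this is immediate, and on the deeper strata of $X_k \times X_k$ one uses the $S_k$-equivariant filtration of $\tilde E$ by $\Sym$- and $\wedge$-pieces (exactly the mechanism of Lemma 2.1, where the $\wedge^2 E$ correction made descent possible) to express $F_\xi^\vee \otimes F_{\xi'}$ through iterated extensions whose graded pieces have the form $E_{x_1} \otimes \cdots \otimes E_{x_k} \otimes E_{y_1}^* \otimes \cdots \otimes E_{y_k}^*$ twisted by powers of $\theta$. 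Theorem 1.4 is precisely the $k=2$ instance of the resulting computation, and it already shows why the reduction is delicate: the naive groups there (the $\CC^2$ in degree $0$ in case (3), or the nonzero groups of case (2)) do not match the target, and the correct $\Ext_M(F_\xi, F_{\xi'})$ emerges only after running the filtration spectral sequences, whose differentials encode the nontrivial extension class of $F_\xi$ and cut the naive cohomology down to the values above (in particular they kill the spurious $\sO_M$-type summand responsible for the off-diagonal non-vanishing). Carrying out this bookkeeping $S_k$-equivariantly for all $k$ is the first substantial task.

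For the graded pieces I would build a computational engine by induction on $k$ via the Hecke correspondence $M \xleftarrow{q} \PP(E_x) \xrightarrow{h} M'$, where $M'$ is a Hecke-modified moduli space (bundles of determinant $L(-x)$). Since $q$ is a $\PP^1$-bundle, cohomology on $M$ is computed on $\PP(E_x)$, and the elementary-modification exact sequence relating $q^* E$ to $h^* E'$ lets one trade one factor $E_x$ for tautological and $\theta$-twists pushed down along $h$, reducing a $2k$-fold product to products with fewer factors. The base cases are Narasimhan's theorem (\cite{Narasimhan1, Narasimhan2}), which is exactly the $k=1$ statement that $\Ext^\bullet_M(E_x, E_y)$ matches $\Ext^\bullet_X(\sO_x, \sO_y)$, together with Theorem 1.4 for $k=2$. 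Known vanishing for the line bundles $\theta^j$ and the rationality $\H^{>0}(\sO_M) = 0$ feed the induction at each step.

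The hard part will be the cohomology computation itself rather than the formal structure. Three difficulties compound as $k$ grows: the combinatorics of the diagonal strata and of the $\Sym/\wedge$ decompositions of $F_\xi$ multiply the number of graded pieces and differentials one must control; the concentration of the self-$\Ext$ in the narrow window $[0,k] \subset [0, 3g-3]$ is a strong vanishing statement requiring genuine positivity input; and each additional factor appears to cost more genus, so that instead of the uniform bound $g \geq 16$ of the $k=2$ case one should expect a threshold $g \geq g_0(k)$ (together with non-hyperellipticity) growing with $k$, the clean range $1 \leq k \leq g-1$ of the conjecture being attainable only after these estimates are optimised. Establishing the vanishing of $\Ext^\bullet_M(F_\xi, F_{\xi'})$ for $\xi \neq \xi'$ uniformly across all stratum types --- that is, verifying that the Hecke induction genuinely cancels every diagonal contribution --- is where I expect the main effort to lie.
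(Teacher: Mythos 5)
The statement you were asked to prove is a conjecture, and the paper itself does not prove it: it establishes only the case $k=2$, and only for non-hyperelliptic $X$ with $g \geq 16$ (the embedding theorem of Section 5). For that case your outline does coincide with the paper's actual route: the Bondal--Orlov criterion applied to the skyscraper images $F_{x,y}$, the $\Sym^2/\bigwedge^2$ filtration of $F_{x,x}$, and the diagram chases in which the ``naive'' nonzero groups $\H^i(M,E_x\otimes E_x\otimes E_z^*\otimes E_z^*)$ cancel to give $\Ext^i_M(F_{x,x},F_{z,z})=0$. One small correction to your framing: Bondal--Orlov does not require you to produce the binomial dimensions $\binom{k}{i}$ for the self-$\Ext$'s; only simplicity, off-diagonal vanishing, and concentration in degrees $[0,\dim X_k]$ are needed, and indeed the paper never computes $\Ext^1_M(F_{x,x},F_{x,x})$ or $\Ext^2_M(F_{x,x},F_{x,x})$ --- those dimensions come out as a consequence of full faithfulness, not as an input.

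As an argument for the conjecture itself, your proposal has a genuine gap at the step you treat as a black box: the Hecke induction. The Hecke-modified moduli space you call $M'$ parametrizes bundles of determinant $L(-x)$, which has even degree; this moduli space is singular along the strictly semistable locus, and by Ramanan's theorem (recalled in the paper's Remark 3.3) it carries no Poincar\'e bundle on any open subset, so ``trade one factor $E_x$ for twists pushed down along $h$'' is not available as stated. Coping with exactly this is what forces the paper into the Narasimhan--Ramanan desingularization $H_0 \to M_0$, the conic bundle $\cC \to Z$, and the codimension estimates for $\varphi^{-1}(\cK)$ and $\varphi^{-1}(\cK_0)$ --- and it is precisely where the hypotheses ``non-hyperelliptic'' and ``$g \geq 16$'' enter; for larger $k$ one would have to control correspondingly deeper strata of the singular locus, which nobody has done. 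Finally, by your own account the method produces genus thresholds $g_0(k)$ growing with $k$, so even a fully executed version of your plan would not cover the conjectured range $1 \leq k \leq g-1$ (already $k=g-1$ is out of reach of any bound of the form $g \geq g_0(k)$ with $g_0$ increasing). So what you have written is a reasonable research program, consistent in spirit with the paper's $k=2$ proof, but it is not a proof of the statement, which remains open.
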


In this paper, we will prove that the above conjecture is true when $k=2,$ $X$ is a non-hyperelliptic curve with $g \geq 16.$ When $k=2,$ the functor constructed above can be explicitly described using the vector bundle $F$ on $X_2 \times M.$

\begin{prop}
When $k=2$ the functor is isomorphic to the Fourier-Mukai transform whose kernel is $F.$
\end{prop}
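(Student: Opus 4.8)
The plan is to compute both pieces of the composite functor explicitly and then match the resulting kernel with $F$. Write $\pi\colon X^2\to X_2$ for the quotient map, $\Pi=\pi\times\id_M\colon X^2\times M\to X_2\times M$, and let $p,q$ be the projections of $X^2\times M$ onto $X^2$ and $M$, and $P,Q$ the projections of $X_2\times M$ onto $X_2$ and $M$. The embedding $\D(X_2)\subset\D([X^2/S_2])$ of \cite{PVdB} is, on objects, the pullback $\pi^*$ with its canonical $S_2$-linearization: for the finite $S_2$-invariant morphism $\pi$ the sheaf $\pi^*\mathcal F$ is canonically $S_2$-equivariant, and one checks that $\Hom^{S_2}(\pi^*\mathcal F,\pi^*\mathcal G)=\Hom(\mathcal F,(\pi_*\pi^*\mathcal G)^{S_2})=\Hom(\mathcal F,\mathcal G)$ because $(\pi_*\mathcal O_{X^2})^{S_2}=\mathcal O_{X_2}$. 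Since $S_2$ acts trivially on $M$, the descended Fourier--Mukai functor of \cite{KS} sends an $S_2$-equivariant object to the trivial-isotypic (that is, $S_2$-invariant) part of its equivariant transform. Hence the composite functor is
$$ \mathcal F\longmapsto \bigl(Rq_*(p^*\pi^*\mathcal F\otimes\tilde E)\bigr)^{S_2},\qquad \mathcal F\in\D(X_2), $$
where $\tilde E=p_{13}^*E\otimes p_{23}^*E$ is the $k=2$ kernel, i.e. the middle term of the sequence defining $\tilde F$ in Lemma 2.1.

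First I would reduce this to a statement about kernels on $X_2\times M$. Using $q=Q\circ\Pi$ and $p^*\pi^*=\Pi^*P^*$, the projection formula for the finite (hence affine and exact) morphism $\Pi$ gives
$$ Rq_*(p^*\pi^*\mathcal F\otimes\tilde E)=RQ_*\,\Pi_*(\Pi^*P^*\mathcal F\otimes\tilde E)=RQ_*\bigl(P^*\mathcal F\otimes\Pi_*\tilde E\bigr). $$
Because $S_2$ acts trivially on $X_2\times M$ and on $M$, the invariants functor is exact, commutes with $RQ_*$, and pulls through $\otimes P^*\mathcal F$ (the factor $P^*\mathcal F$ carries the trivial linearization, being pulled back from $X_2$). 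Thus the composite becomes $\mathcal F\mapsto RQ_*\bigl(P^*\mathcal F\otimes(\Pi_*\tilde E)^{S_2}\bigr)$, and the whole proposition reduces to a single identification of kernels, namely $(\Pi_*\tilde E)^{S_2}\cong F$ on $X_2\times M$.

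The last step I would carry out from the defining sequence of $\tilde F$. Since $F$ is the descent of $\tilde F$ (Lemma 2.1), the projection formula gives $F=(\Pi_*\Pi^*F)^{S_2}=(\Pi_*\tilde F)^{S_2}$. Applying the exact functor $(\Pi_*(-))^{S_2}$ to the short exact sequence
$$ 0\to\tilde F\to\tilde E\to\bigwedge^2 E|_{\Delta\times M}\to 0 $$
relates $F=(\Pi_*\tilde F)^{S_2}$ and $(\Pi_*\tilde E)^{S_2}$ by a map with cokernel $(\Pi_*(\bigwedge^2 E|_{\Delta\times M}))^{S_2}$. The key point is that this cokernel vanishes: the antisymmetrization map $E\otimes E\to\bigwedge^2 E$ along the diagonal is sent to its negative by the swap $\sigma\in S_2$, so $S_2$ acts on $\bigwedge^2 E|_{\Delta\times M}$ through the sign character. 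As $\Pi$ restricts to a closed immersion on $\Delta\times M$ (the diagonal of $X_2$ being reduced and isomorphic to $X$), $\Pi_*(\bigwedge^2 E|_{\Delta\times M})$ is this sign-isotypic sheaf pushed to $\bar\Delta\times M$, whose $S_2$-invariants are zero. This yields $(\Pi_*\tilde E)^{S_2}\cong F$, and the chain of natural isomorphisms above then identifies the composite functor with $\Phi_F$.

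The main obstacle is the equivariant bookkeeping at the two points where a choice of $S_2$-linearization is decisive. First, one must confirm that the embedding of \cite{PVdB} is the canonically linearized $\pi^*$ rather than its twist by the sign character, since the latter would extract the sign-isotypic kernel and produce a different bundle. Second, one must verify the sign-character computation on the diagonal that forces the correction term to vanish; both hinge on tracking precisely how $\sigma$ acts on the antisymmetrization map, and on the compatibility of the descent in Lemma 2.1 with the trivial-isotypic projection built into the descended Fourier--Mukai functor of \cite{KS}. Once these linearizations are pinned down, the remaining ingredients — the projection formula, exactness of the invariants functor, and the naturality of each isomorphism — are routine.
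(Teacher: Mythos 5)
Your proof is correct, and it reaches the conclusion by a genuinely more elementary route than the paper's. The paper argues structurally: it quotes the semiorthogonal decomposition $\D([X^2/S_2])=\langle \D(X)\otimes\zeta,\ \D(X_2)\rangle$ of \cite{CP}, base-changes it to $\D([X^2/S_2]\times M)=\langle \D(X\times M)\otimes\zeta,\ \D(X_2\times M)\rangle$ via \cite{Kuznetsov, LP}, and then identifies the two projections of the equivariant kernel $\tilde E=p_{13}^*E\otimes p_{23}^*E$ with $\tilde F$ (which descends to $F$) and $\bigwedge^2E|_{\Delta\times M}$, the projections being encoded by the defining exact sequence; the proposition follows by restricting the descended transform to the $\D(X_2)$ component. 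You bypass the decomposition and base-change formalism altogether: you unwind the composite as $\mathcal F\mapsto (Rq_*(p^*\pi^*\mathcal F\otimes\tilde E))^{S_2}$, use the projection formula along the finite map $\Pi$ together with exactness of invariants in characteristic zero to reduce the whole statement to the single kernel identity $(\Pi_*\tilde E)^{S_2}\cong F$, and then prove that identity by applying the exact functor $(\Pi_*(-))^{S_2}$ to the defining sequence and observing that $\sigma$ acts through the sign character on $\bigwedge^2E|_{\Delta\times M}$, so its invariants vanish. That sign computation is precisely the observation underlying the paper's proof (and its Lemma 2.1, where $\sigma$ acts by $-1$ on the conormal bundle of the diagonal), so the essential input of the two arguments coincides; moreover both of the linearization issues you flag are resolved consistently with the paper, whose decomposition places the twist $\zeta$ on the $\D(X)$ component and embeds $\D(X_2)$ by untwisted pullback, and whose descent via \cite{KS} is indeed the invariant part of the equivariant transform. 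What your approach buys is a self-contained, explicit verification resting only on the projection formula and isotypic decomposition; what the paper's approach buys is the structural picture, since it also locates the complementary piece $\bigwedge^2E|_{\Delta\times M}$ of the kernel inside the component $\D(X\times M)\otimes\zeta$ -- the formulation that generalizes to the higher symmetric powers of the paper's Conjecture 2.2 and that feeds into the semiorthogonal decomposition results of its final section.
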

\begin{proof}

The equivariant derived category of $X^2$ has the following semiorthogonal decomposition (cf. \cite{CP})
$$ \D([X^2/S_2]) = \langle \D(X) \otimes \zeta, \D(X_2) \rangle $$
where the first component is given by direct image functor via diagonal embedding $\Delta : X \to X^2$ tensored by a nontrivial character $\zeta$ of $S_2$ and the second component is given by pullback via the quotient map $X^2 \to X_2.$ \\

Therefore we have the following decomposition (cf. \cite{Kuznetsov, LP}).
$$ \D([X^2/S_2] \times M) = \langle \D(X \times M) \otimes \zeta, \D(X_2 \times M) \rangle $$

Because the bundle $\tilde{E} = p_{1,3}^*E \otimes p_{2,3}^*E$ on $X^2 \times M$ has a natural linearization, $\tilde{E}$ gives an element in $\D( [X^2/S_2] \times M)$ and hence it can be projected into $\D(X_2 \times M)$ and $\D(X \times M) \otimes \zeta.$ The projections can be described via the following exact sequence. 
$$ 0 \to \widetilde{F} \to p_{13}^*E \otimes p_{23}^*E \to {\bigwedge^2 E|}_{\Delta \times M} \to 0 $$

Moreover, the Fourier-Mukai functor $\Phi_{\tilde{E}} : \D(X^2) \to \D(M)$ descents to a functor $\Phi_{\tilde{E}} : \D([X^2/S_2]) \to \D(M).$ ( Here we use the same notation to denote the descent. ) Therefore we see that the composition of the embedding $\D(X_2) \to \D([X^2/S_2])$ and the descent of $\Phi_{\tilde{E}} : \D([X^2/S_2]) \to \D(M)$ is same as the Fourier-Mukai transform whose kernel is $F.$
\end{proof}

\begin{rema} Let $F$ be the vector bundle $X_2 \times M$ constructed above and $\Phi_F$ be the Fourier-Mukai transform whose kernel is $F.$ Let $x, y \in X$ and $(x,y) \in X_2$ and $F_{x,y}=\Phi_{F}(\CC((x,y))).$ \\
(1) When $x \neq y,$ $F_{x,y} \cong E_x \otimes E_y.$ \\
(2) When $x = y,$ we have the following short exact sequence.
$$ 0 \to \bigwedge^2 E_x \to F_{x,x} \to \Sym^2 E_x \to 0 $$
\end{rema}

In the rest of the paper, we will prove that the Fourier-Mukai transform $\Phi_F$ is fully faithful when $X$ is non-hyperelliptic with $g \geq 16.$

\bigskip

\section{Geometry of projective bundles}

In this section, we prove several results which will be useful later. Let $x$ be a point of $X$ and let us assume that $L \cong \cO(x)$ in this section.

\subsection{Projective bundles}

Let $\pi : \PP E_x \to M$ be the projective bundle of $E_x.$ We can compute the (relative) dualizing sheaf of $\PP E_x$ as follows.

\begin{prop}
We have the following isomorphisms.
$$ \omega_{\PP E_x/M} \cong \pi^* \theta \otimes \cO(-2) $$
$$ \omega_{\PP E_x} \cong \pi^* \theta^{-1} \otimes \cO(-2) $$
\end{prop}
\begin{proof}
From the relative Euler sequence
$$ 0 \to \cO_{\PP E_x} \to \pi^* E_x^* \otimes \cO(1) \to T_{\PP E_x/M} \to 0 $$
we have 
$$ \omega_{\PP E_x/M} \cong \pi^* \theta \otimes \cO(-2). $$
We can compute $\omega_{\PP E_x}$ from the following formula. 
$$ \omega_{\PP E_x} \cong \omega_{\PP E_x/M} \otimes \pi^* \omega_M \cong \pi^* \theta^{-1} \otimes \cO(-2)$$
\end{proof}

\subsection{Hecke transforms}

Let us recall the Hecke transforms and constructions of \cite{Narasimhan1, NR;75}.
We have the following short exact sequence on $X \times \PP E_x.$ 
$$ 0 \to H(E) \to (1 \times \pi)^*E \to p_X^*( \CC(x) ) \otimes p_{\PP E_x}^*(\cO_{\PP E_x}(1)) \to 0 $$

Because $H(E)$ parametrizes a family of semistable vector bundles of degree 0, we have the following diagram.
\begin{displaymath}
\xymatrix{
X \times \PP E_x \ar[r]^{1 \times \varphi} \ar[d]_{1 \times \pi} & X \times M_0 \\
X \times M & }
\end{displaymath}

Taking dual of the above sequence we have the following sequence.
$$ 0 \to (1 \times \pi)^*E^* \to K(E) \to \cExt^1(p_X^*( \CC(x) ) \otimes p_{\PP E_x}^*(\cO_{\PP E_x}(1)),\cO_{X \times \PP E_x}) \to 0 $$
Here $K(E)$ is the dual of $H(E).$

\begin{rema}
Ramanan proved that there is no Poincar{\'e} bundle on any open subset of $X \times M_0$ in \cite{Ramanan}. However we can define ``adjoint bundle" $\ad G$ on $X \times M^s_0.$ See \cite[Lemma 2.2, Remark 2.3]{BV} for more details.
\end{rema}

\begin{prop}
For $y$ be a point of $X$ different from $x.$ Over $(1 \times \varphi)^{-1}(X \times M^s_0),$ we have the following isomorphism.
$$ \pi^*{\ad E_y} \cong \ad K(E)_y \cong \varphi^* \ad G_y $$
\end{prop}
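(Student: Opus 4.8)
The plan is to establish the two isomorphisms separately: the first, $\pi^*\ad E_y \cong \ad K(E)_y$, will hold over all of $\PP E_x$, while the second, $\ad K(E)_y \cong \varphi^*\ad G_y$, will require the stability hypothesis and is where the real work lies. First I would restrict the defining Hecke sequence to $\{y\}\times\PP E_x$. Since $y \neq x$, the skyscraper term $p_X^*(\CC(x))\otimes p_{\PP E_x}^*(\cO_{\PP E_x}(1))$ restricts to zero on $\{y\}\times \PP E_x$, so the inclusion $H(E)\hookrightarrow (1\times\pi)^*E$ becomes an isomorphism there, giving $H(E)_y \cong \pi^*E_y$. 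Dualizing and using $K(E)=H(E)\dual$ yields $K(E)_y\cong \pi^*E_y\dual$. Finally, for any rank-two bundle $V$ the transpose map identifies $\End(V\dual)\cong \End(V)$ compatibly with the splitting $\End(V)=\cO\cdot\id\oplus\ad(V)$, so $\ad(V\dual)\cong\ad(V)$; applying this to $V=E_y$ gives $\ad K(E)_y\cong \pi^*\ad(E_y\dual)\cong \pi^*\ad E_y$.

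For the second isomorphism I would use that, over $(1\times\varphi)^{-1}(X\times M^s_0)$, the sheaf $H(E)$ is a local universal family of stable rank-two degree-zero bundles whose classifying map is precisely $1\times\varphi$. By the universal property of $M^s_0$, together with the non-existence of a Poincar\'e bundle, $(1\times\varphi)^*$ of a local universal family agrees with $H(E)$ only up to tensoring by a line bundle pulled back from $\PP E_x$. Passing to adjoint bundles removes this ambiguity, since $\ad(V\otimes N)\cong \ad V$ for any line bundle $N$; this is exactly the feature that allows $\ad G$ to be defined on $X\times M^s_0$ despite Ramanan's theorem (cf. \cite[Lemma 2.2, Remark 2.3]{BV}). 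Hence $(1\times\varphi)^*\ad G\cong \ad H(E)$ over the preimage of the stable locus. Restricting to $\{y\}\times\PP E_x$ and using once more that $\ad H(E)_y\cong \ad K(E)_y$ (adjoint is insensitive to dualizing), I obtain $\varphi^*\ad G_y\cong \ad K(E)_y$, completing the chain.

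The main obstacle is the precise descent argument in the second step: because Ramanan's theorem forbids a Poincar\'e bundle on $X\times M_0$, one cannot simply write $(1\times\varphi)^*G\cong H(E)$ and must instead argue at the level of adjoint (equivalently, projective) bundles. The care needed is twofold: first, to confirm that the Hecke modifications parametrized by the relevant open subset of $\PP E_x$ are indeed stable of degree zero, so that $\varphi$ lands in $M^s_0$ and $H(E)$ restricts to a genuine local universal family there; and second, to check that the line-bundle twist relating $(1\times\varphi)^*G$ and $H(E)$ is pulled back from $\PP E_x$, so that it is annihilated by $\ad$ and does not interfere with the restriction to $\{y\}\times\PP E_x$. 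Both points follow from the Hecke-correspondence constructions of \cite{Narasimhan1, NR;75} and the definition of $\ad G$ in \cite{BV}, but they are the substantive content of the proof; the first isomorphism, by contrast, is a direct consequence of the defining sequence.
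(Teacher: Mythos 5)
Your proposal is correct and follows the same route as the paper's (much terser) proof: the first isomorphism because the Hecke modification is supported at $x$ and so leaves the bundle unchanged over $\{y\}\times\PP E_x$ (with the transpose identification $\ad(V\dual)\cong\ad V$ handling the dual $K(E)=H(E)\dual$), and the second from the defining property of $\ad G$ in \cite{BV}, with $\ad$ killing the line-bundle ambiguity that Ramanan's theorem forces. Your write-up simply makes explicit the verifications the paper leaves implicit.
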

\begin{proof}
For $y \neq x,$ Hecke transform does not change the bundle. Therefore we obtain the first isomorphism. The second isomorphism comes from the construction of $\ad G.$
\end{proof}

\subsection{$k$-ample}

Let us recall works of Sommese which is useful to prove vanishing of certain cohomology groups of tensor products of vector bundles (cf \cite{BM, LN}).

\begin{defi}
A line bundle $\cL$ on a smooth projective variety $Y$ is $k$-ample if (1) $\cL$ is semiample, i.e. $\cL^{\otimes m}$ is base point free for sufficiently large $m$ and (2) the fibers of the morphism $Y \to \bP(H^0(Y,\cL^{\otimes m})^*)$ have dimension $\leq k.$
\end{defi}

\begin{defi} A vector bundle $\cE$ on a projective variety $Y$ is semiample if $\cO_{\PP(\cE)}(1)$ is semiample. It is $k$-ample if $\cO_{\PP(\cE)}(1)$ is $k$-ample.
\end{defi}

Let us recall Sommese vanishing theorem. 

\begin{theo}[Sommese vanishing theorem]\cite{Sommese}
Suppose that $\cE$ is a $k$-ample bundle of rank $r.$ Then we have the following equality
$$ \H^i(Y, \omega_Y \otimes \wedge^j \cE)=0 $$
for $j>0$ and $i+j>r+k.$
\end{theo}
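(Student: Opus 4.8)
The plan is to reduce the stated exterior-power vanishing to the line-bundle form of Sommese's theorem, namely the $k$-ample generalization of the Kodaira--Nakano vanishing theorem: for a $k$-ample line bundle $\cL$ on a smooth projective variety $Z$ of dimension $N$ one has $\H^i(Z, \Omega^p_Z \otimes \cL)=0$ whenever $i+p>N+k$. I would take this line-bundle statement as the fundamental input (it is the genuinely deep ingredient, and is what Sommese establishes first), and transfer it to $\cE$ by passing to the projective bundle $Z=\PP(\cE)$ with projection $\pi : Z \to Y$ and tautological quotient $\cO_Z(1)$. By the very definition of $k$-ampleness for $\cE$ the line bundle $\cO_Z(1)$ is $k$-ample, hence so is $\cO_Z(j)$ for every $j \ge 1$; note $N = \dim Z = \dim Y + r - 1$.

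The first technical step is to realize $\omega_Y \otimes \wedge^j \cE$ as a direct image from $Z$. From the tautological sequence $0 \to S \to \pi^*\cE \to \cO_Z(1) \to 0$ (with $S$ of rank $r-1$ and $\Omega_{Z/Y} \cong S \otimes \cO_Z(-1)$, so that $\wedge^p S \cong \Omega^p_{Z/Y}(p)$) one extracts the short exact sequence
$$ 0 \to \Omega^j_{Z/Y}(j) \to \pi^*\wedge^j \cE \to \Omega^{j-1}_{Z/Y}(j) \to 0. $$
Bott's formula, applied fibrewise over $Y$, shows $R^\bullet \pi_*\bigl(\Omega^j_{Z/Y}(j)\bigr)=0$ for $j>0$, so pushing the sequence forward gives $R\pi_*\bigl(\Omega^{j-1}_{Z/Y}(j)\bigr) \cong \wedge^j \cE$ concentrated in degree $0$. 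Tensoring by $\pi^*\omega_Y$ and using the projection formula together with $R^{>0}\pi_*\cO_Z=0$, the Leray spectral sequence then yields the identification $\H^i\bigl(Y, \omega_Y \otimes \wedge^j \cE\bigr) \cong \H^i\bigl(Z, \pi^*\omega_Y \otimes \Omega^{j-1}_{Z/Y}(j)\bigr)$ for all $i$.

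It remains to bound the right-hand group. Writing $n=\dim Y$, the relative cotangent sequence $0 \to \pi^*\Omega_Y \to \Omega_Z \to \Omega_{Z/Y} \to 0$ induces on $\Omega^{n+j-1}_Z$ a Hodge filtration whose deepest graded piece is the subsheaf $\pi^*\omega_Y \otimes \Omega^{j-1}_{Z/Y}$; twisting by $\cO_Z(j)$ gives
$$ 0 \to \pi^*\omega_Y \otimes \Omega^{j-1}_{Z/Y}(j) \to \Omega^{n+j-1}_Z(j) \to Q \to 0, $$
where $Q$ is filtered by the terms $\pi^*\Omega^a_Y \otimes \Omega^{n+j-1-a}_{Z/Y}(j)$ with $a<n$. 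Each nonvanishing term has relative degree $b=n+j-1-a$ in the range $j \le b \le r-1$, and for $1 \le j \le b \le r-1$ Bott's formula gives $\H^\bullet(\PP^{r-1}, \Omega^b_{\PP^{r-1}}(j))=0$; hence every graded piece of $Q$ has vanishing $R\pi_*$, so $\H^\bullet(Z,Q)=0$. Consequently $\H^i\bigl(Z, \pi^*\omega_Y \otimes \Omega^{j-1}_{Z/Y}(j)\bigr) \cong \H^i\bigl(Z, \Omega^{n+j-1}_Z(j)\bigr)$. Applying the line-bundle vanishing to the $k$-ample $\cO_Z(j)$ with $p=n+j-1$ and $N=n+r-1$, the right-hand side vanishes exactly when $i+(n+j-1)>(n+r-1)+k$, i.e.\ when $i+j>r+k$, as desired. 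The case $j=r$ is the degenerate one $Q=0$, and $j>r$ is trivial since $\wedge^j \cE=0$.

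The main obstacle is not this combinatorial transfer, which is routine once Bott's formula and the Hodge filtration are in hand, but the line-bundle input itself: the $k$-ample Nakano--Kodaira vanishing $\H^i(Z, \Omega^p_Z \otimes \cL)=0$ for $i+p>\dim Z+k$. I would either cite this directly from \cite{Sommese} or reconstruct it, for instance by combining the ordinary Kodaira--Nakano vanishing with a Leray argument along the semiample morphism whose fibres have dimension $\le k$, or via the Hodge-theoretic proof. A secondary point needing care is the application of Bott's formula in families, which is unproblematic since the relevant $R^q\pi_*$ are computed by cohomology and base change from the constant answer on the $\PP^{r-1}$ fibres.
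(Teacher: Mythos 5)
The paper does not prove this statement at all: it is recalled verbatim from Sommese's paper \cite{Sommese} and used as a black box, so there is no internal argument to compare yours against. Judged on its own, your reduction is correct and is essentially the standard Le Potier-style deduction of the exterior-power statement from the line-bundle case, which is also how the bundle version is obtained in the literature. I checked the key steps: the tautological sequence on $Z=\PP(\cE)$ does give $0 \to \Omega^j_{Z/Y}(j) \to \pi^*\wedge^j\cE \to \Omega^{j-1}_{Z/Y}(j) \to 0$ with Bott's formula killing $R\pi_*\Omega^j_{Z/Y}(j)$ for $j>0$; the deepest piece of the Hodge filtration of $\Omega^{n+j-1}_Z$ (where $n=\dim Y$) is indeed the subsheaf $\pi^*\omega_Y\otimes\Omega^{j-1}_{Z/Y}$, and the graded pieces of the quotient $Q$ have relative degrees $b$ with $j\le b\le r-1$, so Bott's formula gives $R\pi_*Q=0$; and the $k$-ample line-bundle vanishing applied to $\cO_Z(j)$ with $p=n+j-1$, $\dim Z=n+r-1$ yields precisely the range $i+j>r+k$, with the degenerate cases $j=r$ and $j>r$ handled correctly. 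What your route buys is transparency of the logical dependence: the only deep input is the line-bundle $k$-ample Kodaira--Nakano vanishing, which is non-circular to cite since the quoted statement is the bundle version, not the line-bundle version. Two caveats. First, your proof is a reduction, not a proof from scratch; since the paper only needs the statement as a black box, your argument adds rigor of attribution rather than anything the paper's citation lacks for its purposes. Second, your suggested alternative reconstruction of the line-bundle input (ordinary Kodaira--Nakano plus a Leray argument along the semiample morphism) is more delicate than you indicate: the image of the semiample map can be singular and $\cL$ itself, as opposed to a high power, need not descend along it, so citing \cite{Sommese} for that input is the correct choice rather than a cosmetic one.
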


Let us consider the following morphism (cf. \cite{Narasimhan2}).
\begin{displaymath}
\xymatrix{
\PP E_x \ar[r]^{\varphi} \ar[d]_{\pi} & M_0 \\
M & }
\end{displaymath}

The morphism $\varphi$ is analyzed in \cite{Narasimhan2, NR;78}. The dimension of the fiber of $\varphi$ is less than or equal to $g-1.$ Therefore we see that $E_x$ is $(g-1)$-ample. From the following result, we see that $E_x \otimes E_y \otimes E_z \otimes E_w$ is $(g-1)$-ample for any four points $x,y,z,w$ of $X.$

\begin{theo}\cite{LN}
If $\cE$ is a $k$-ample vector bundle and $\cF$ is a semiample vector bundle on a compact complex manifold $Y.$ Then $\cE \otimes \cF$ is also $k$-ample.
\end{theo}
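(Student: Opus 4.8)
The statement is equivalent to showing that the tautological line bundle $\cO_{\PP(\cE\otimes\cF)}(1)$ on $P:=\PP(\cE\otimes\cF)$ is $k$-ample, so the plan is to verify the two defining conditions in turn: first semiampleness, then the bound on fiber dimension. For semiampleness, note that $\cE$ is $k$-ample, hence in particular semiample, and $\cF$ is semiample by hypothesis; since a tensor product of semiample bundles is again semiample, $\cE\otimes\cF$ is semiample, so $\cO_P(1)$ is semiample and for $m\gg 0$ it defines a morphism $\Phi\colon P\to Z$. Write $e=\mathrm{rk}\,\cE$ and $f=\mathrm{rk}\,\cF$. What remains is to prove that every fiber of $\Phi$ has dimension at most $k$.

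My first reduction would be to observe that every contracted subvariety is horizontal over $Y$. Let $\pi\colon P\to Y$ be the structure map and let $G$ be a connected fiber of $\Phi$. On $G$ the bundle $\cO_P(1)$ is numerically trivial, whereas its restriction to any fiber of $\pi$ is the ample bundle $\cO_{\PP^{\,ef-1}}(1)$. Consequently $G$ contains no curve that is vertical for $\pi$, so $\pi|_G\colon G\to\pi(G)$ is finite and $\dim G=\dim\pi(G)\le\dim Y$. Thus it suffices to bound the dimension of the horizontal image $B:=\pi(G)\subset Y$ by $k$.

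To bring in the hypotheses on $\cE$ and $\cF$ I would use the relative Segre embedding $s\colon \PP(\cE)\times_Y\PP(\cF)\hookrightarrow P$, under which $\cO_P(1)$ restricts to $pr_1^{*}\cO_{\PP\cE}(1)\otimes pr_2^{*}\cO_{\PP\cF}(1)$, together with Sommese's functoriality of $k$-ampleness under pullback and descent (cf.\ \cite{Sommese}). The model case is $\cF$ a line bundle $\cN$: then $\PP(\cE\otimes\cN)=\PP(\cE)$ with $\cO_{\PP(\cE\otimes\cN)}(1)=\cO_{\PP\cE}(1)\otimes p^{*}\cN$, where $p\colon\PP\cE\to Y$; since $\cO_{\PP\cE}(1)$ is $k$-ample and $p^{*}\cN$ is nef and semiample, and the tensor product of a $k$-ample line bundle with a nef semiample line bundle is again $k$-ample, the rank-one case follows directly. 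For higher rank I would induct on $f$, passing to $W=\PP(\cF)$ with $q\colon W\to Y$: as $q$ is surjective and proper, descent reduces the claim to proving that $q^{*}(\cE\otimes\cF)=q^{*}\cE\otimes q^{*}\cF$ is $k$-ample on $W$, where $q^{*}\cF$ now carries the tautological quotient line bundle $\cO_W(1)$ together with a corank-one subbundle, setting up a comparison with the rank-one case after twisting.

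The main obstacle is precisely this last point: keeping the fiber dimension equal to $k$ rather than letting it grow to $k+(f-1)$. Naively, pulling $\cE$ back along $q$ only yields $(k+f-1)$-ampleness, because the $\PP^{\,f-1}$-directions are a priori uncontrolled; the crux is that the extra directions introduced by $\cF$ lie along loci that are already contracted by the semiample morphism of $\cF$, so they do not enlarge the contracted locus of $\cE\otimes\cF$. Making this precise---showing that the horizontal image $B$ above is forced into the image of a fiber of the contraction $\phi_{\cE}$ associated to the $k$-ample bundle $\cE$, whose fibers have dimension at most $k$, so that $\dim B\le k$---is where the semiampleness of $\cF$ (rather than mere nefness) is essential, and is the heart of the argument. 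Combining this bound with the semiampleness established in the first step yields the $k$-ampleness of $\cO_P(1)$, that is, of $\cE\otimes\cF$.
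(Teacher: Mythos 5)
Your proposal never actually proves the statement it correctly identifies as ``the heart of the argument.'' The preliminary reductions are fine: $\cO_P(1)$ is $\pi$-ample, so a fiber $G$ of the semiample contraction $\Phi$ contains no vertical curves and is finite over $B=\pi(G)$; and the rank-one case is correct, since for nef line bundles $\bigl(\cO_{\PP\cE}(1)\otimes p^{*}\cN\bigr)\cdot C=0$ forces $\cO_{\PP\cE}(1)\cdot C=\cN\cdot C=0$, so the contracted fibers sit inside fibers of $\phi_{\cE}$. But the decisive claim --- that $\dim B\le k$ for a general contracted subvariety --- is exactly the content of the theorem, and your last paragraph only announces that semiampleness of $\cF$ ``is essential'' there; no argument is given. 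Note also that your very first step already assumes that a tensor product of semiample vector bundles is semiample. For vector bundles (unlike line bundles) this is not formal: semiampleness concerns $\cO_{\PP(\cdot)}(m)$, not global generation of $S^m(\cdot)$, and it is itself one of the results proved in \cite{LN}. So both the first step and the final step of your plan are assumed rather than proven. (For what it is worth, the paper under review gives no proof either --- it quotes this theorem from \cite{LN} --- so your sketch can only be judged on its own terms.)

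Two of your intermediate reductions would moreover fail as stated. First, the descent step reduces to a false statement: $q:\PP(\cF)\to Y$ is not finite, and the semiample contraction of $\cO_{\PP(q^{*}(\cE\otimes\cF))}(1)$ is the composite of the contraction of $\cO_{\PP(\cE\otimes\cF)}(1)$ with the projection $\PP(\cE\otimes\cF)\times_Y\PP(\cF)\to\PP(\cE\otimes\cF)$, whose fibers $\PP^{f-1}$ (with $f$ the rank of $\cF$) are contracted. Hence $q^{*}(\cE\otimes\cF)$ is $k$-ample if and only if $\cE\otimes\cF$ is $(k-f+1)$-ample, which is generally false: take $\cE$ ample and $\cF=\cO_Y^{\oplus f}$, so that $\cE\otimes\cF$ is $0$-ample while $q^{*}(\cE\otimes\cF)$ is not. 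You notice this yourself (``naively, pulling $\cE$ back along $q$ only yields $(k+f-1)$-ampleness''), but the repair you then invoke is again the unproven heart of the matter. Second, the relative Segre embedding cannot ``bring in the hypotheses'': a point of $\PP(\cE\otimes\cF)$ over $y$ is a rank-one quotient of $\cE_y\otimes\cF_y$, i.e.\ a homomorphism $\cE_y\to\cF_y^{*}$ up to scale, and the Segre locus consists precisely of those of rank one; a fiber of $\Phi$ may consist entirely of higher-rank points and miss the Segre locus altogether, so restricting $\cO_P(1)$ to it controls nothing. In short, the proposal is an outline whose crucial step --- the actual interaction between $k$-ampleness of $\cE$ and semiampleness of $\cF$ --- is missing.
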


\begin{prop}
Let $x,y,z,w$ be four (not necessarily distinct) points of $X.$ Then $E_x \otimes E_y \otimes E_z^* \otimes E_w^* \otimes \omega_M^{-1}$ is a $(g-1)$-ample bundle.
\end{prop}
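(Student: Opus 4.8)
The plan is to reduce the statement to the already-established fact that $E_x \otimes E_y \otimes E_z \otimes E_w$ is $(g-1)$-ample, by cancelling the dual twists against the anticanonical bundle. The key observation is that, after accounting for the rank $2$ self-duality of the $E_x$ and the Fano index of $M$, all the line-bundle factors conspire to disappear, leaving an untwisted fourfold tensor product whose $(g-1)$-ampleness has already been verified.

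First I would exploit that each $E_x$ has rank $2$. For any rank $2$ bundle $\cE$ there is a canonical isomorphism $\cE^* \cong \cE \otimes (\det \cE)^{-1}$, arising from the perfect pairing $\cE \otimes \cE \to \det \cE$. Since the Poincar\'e bundle is normalized so that $\det E_x \cong \theta$ for every $x \in X$, this yields
$$ E_z^* \cong E_z \otimes \theta^{-1}, \qquad E_w^* \cong E_w \otimes \theta^{-1}. $$
Next I would use that $M$ is a Fano variety of index $2$ whose Picard group is generated by $\theta$, so that $\omega_M^{-1} \cong \theta^2$. Combining the two facts, the line-bundle twists cancel exactly:
$$ E_x \otimes E_y \otimes E_z^* \otimes E_w^* \otimes \omega_M^{-1} \cong E_x \otimes E_y \otimes E_z \otimes E_w \otimes \theta^{-2} \otimes \theta^2 \cong E_x \otimes E_y \otimes E_z \otimes E_w. $$
Since $k$-ampleness depends only on the isomorphism class of a bundle (being defined through the tautological line bundle $\cO_{\PP(\cE)}(1)$ on its projectivization), and the right-hand side was shown to be $(g-1)$-ample in the discussion preceding the proposition, the claim follows.

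There is no serious obstacle here: the argument is a clean bookkeeping of line-bundle twists rather than a geometric estimate. The only points requiring care are the justification of the rank $2$ self-duality isomorphism $\cE^* \cong \cE \otimes (\det \cE)^{-1}$ and the identification $\omega_M^{-1} \cong \theta^2$ furnished by the index $2$ Fano structure. Once these are in place the cancellation is immediate, and the $(g-1)$-ampleness is inherited directly from the untwisted product $E_x \otimes E_y \otimes E_z \otimes E_w$.
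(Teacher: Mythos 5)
Your proposal is correct and takes essentially the same route as the paper: both reduce the claim to the previously established $(g-1)$-ampleness of $E_x \otimes E_y \otimes E_z \otimes E_w$ via the isomorphism $E_x \otimes E_y \otimes E_z^* \otimes E_w^* \otimes \omega_M^{-1} \cong E_x \otimes E_y \otimes E_z \otimes E_w$. The only difference is that the paper asserts this isomorphism without comment, while you justify it explicitly (rank-2 self-duality $E^* \cong E \otimes (\det E)^{-1}$ with $\det E_x \cong \theta$, together with $\omega_M^{-1} \cong \theta^{2}$ from the index-2 Fano structure), which is a welcome filling-in of detail rather than a different argument.
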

\begin{proof}
From the isomorphism
$$ E_x \otimes E_y \otimes E_z^* \otimes E_w^* \otimes \omega_M^{-1} \cong E_x \otimes E_y \otimes E_z \otimes E_w $$
we see that $E_x \otimes E_y \otimes E_z^* \otimes E_w^* \otimes \omega_M^{-1}$ is a $(g-1)$-ample bundle.
\end{proof}

\begin{prop}\label{highercohomology}
Let $x,y,z,w$ be four (not necessarily distinct) points of $X.$ For $i \geq g+15$ we have the following vanishing.
$$ \H^i(M,E_x \otimes E_y \otimes E_z^* \otimes E_w^*) = 0 $$
\end{prop}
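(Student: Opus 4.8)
The plan is to combine the $(g-1)$-ampleness established in the previous proposition with Sommese's vanishing theorem, after rewriting the bundle $E_x \otimes E_y \otimes E_z^* \otimes E_w^*$ as a wedge power of a $(g-1)$-ample bundle twisted by the canonical bundle of $M$. First I would set $\cE := E_x \otimes E_y \otimes E_z^* \otimes E_w^* \otimes \omega_M^{-1}$, which by the preceding proposition is a $(g-1)$-ample vector bundle on $M$. Its rank is $r = 2^4 = 16$, since each $E_\bullet$ has rank $2$ and $\dim M = 3g-3$. The goal cohomology group can then be written as $\H^i(M, \omega_M \otimes \cE)$, which is exactly the shape $\H^i(Y, \omega_Y \otimes \wedge^j \cE)$ appearing in Sommese's theorem with $j=1$.

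With $j=1$, $k = g-1$, and $r=16$, Sommese's vanishing theorem (quoted above) asserts that $\H^i(M, \omega_M \otimes \cE) = 0$ whenever $i + j > r + k$, i.e. whenever $i + 1 > 16 + (g-1) = g + 15$, equivalently $i > g + 14$, i.e. $i \geq g + 15$. This is precisely the range claimed in the statement. The proof therefore reduces to a bookkeeping check: identify the rank as $16$, identify the $k$-ampleness parameter as $g-1$, apply the theorem with $j=1$, and read off the numerical threshold.

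The one point requiring a little care is the application of the Serre-duality-type rewriting $E_x \otimes E_y \otimes E_z^* \otimes E_w^* \cong (E_x \otimes E_y \otimes E_z^* \otimes E_w^* \otimes \omega_M^{-1}) \otimes \omega_M = \cE \otimes \omega_M$, so that the target group is literally $\H^i(M, \omega_M \otimes \wedge^1 \cE)$ and the hypotheses of Sommese's theorem apply verbatim with $j=1$. I would state this identification explicitly and then invoke the theorem. I do not anticipate a genuine obstacle here, since all the substantive geometric input — the fiber-dimension estimate for $\varphi$ giving $(g-1)$-ampleness of $E_x$, and the tensor-product stability of $k$-ampleness — has already been assembled in the preceding propositions; the present statement is the direct numerical consequence.

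\begin{proof}
By Proposition \ref{highercohomology}'s predecessor, the bundle
$$ \cE := E_x \otimes E_y \otimes E_z^* \otimes E_w^* \otimes \omega_M^{-1} $$
is $(g-1)$-ample on $M$. Since each of $E_x, E_y, E_z, E_w$ has rank $2$, the rank of $\cE$ is $r = 2^4 = 16$. Using the isomorphism
$$ E_x \otimes E_y \otimes E_z^* \otimes E_w^* \cong \omega_M \otimes \cE, $$
the cohomology group in question becomes
$$ \H^i(M, E_x \otimes E_y \otimes E_z^* \otimes E_w^*) \cong \H^i(M, \omega_M \otimes \textstyle\bigwedge^1 \cE). $$
Applying the Sommese vanishing theorem with $j=1$, $k=g-1$ and $r=16$, this group vanishes whenever $i + j > r + k$, that is, whenever $i + 1 > 16 + (g-1) = g+15$, i.e. for $i \geq g+15$. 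This gives the desired vanishing.
\end{proof}
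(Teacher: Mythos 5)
Your proof is correct and follows essentially the same route as the paper: both rewrite the bundle as $\omega_M$ tensored with the $(g-1)$-ample rank-$16$ bundle from the preceding proposition and apply Sommese's theorem with $j=1$ to get vanishing for $i > g+14$, i.e. $i \geq g+15$. The only cosmetic difference is that the paper substitutes the isomorphism $E_x \otimes E_y \otimes E_z^* \otimes E_w^* \otimes \omega_M^{-1} \cong E_x \otimes E_y \otimes E_z \otimes E_w$ explicitly, while you keep the twisted form $\cE$ throughout; the content is identical.
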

\begin{proof}
From the previous Proposition and Sommese vanishing theorem we have the following vanishing
$$ \H^i(M,E_x \otimes E_y \otimes E_z^* \otimes E_w^*) \cong \H^i(M,E_x \otimes E_y \otimes E_z^* \otimes E_w^* \otimes \omega_M^{-1} \otimes \omega_M) $$
$$ \cong \H^i(M, \omega_M \otimes E_x \otimes E_y \otimes E_z \otimes E_w) = 0 $$
for $i > g-1+16-1=g+14.$
\end{proof}

\bigskip

\section{Cohomology groups}

In this section we will assume that $X$ is a non-hyperelliptic curve with genus $g \geq 16$ and compute cohomology group $\H^i(M, E_x \otimes E_y \otimes E_z^* \otimes E_w^*)$ for all $i.$ Let us explain why we need these assumptions. As in \cite{Narasimhan1}, we can try to use the morphism $\varphi : \PP E_x \to M_0$ to compute the cohomology groups. Note that the codimension of $\varphi^{-1}(\cK)$ is $g-1.$ From Sommese vanishing theorem we see that $\H^i(M,E_x \otimes E_y \otimes E_z^* \otimes E_w^*) = 0 $ for $i \geq g+15,$ and we see that analyzing the morphism $\varphi$ only over $M^s_0$ is not enough to compute $\H^i(M, E_x \otimes E_y \otimes E_z^* \otimes E_w^*)$ for all $i.$ Therefore we will use a similar strategy used in \cite{Narasimhan2}. In \cite{NR;78} the second named author and Ramanan constructed modular desingularization $\psi : H_0 \to M_0$ when $X$ is non-hyperelliptic curve of genus $g \geq 3.$ Let $\cK$ be the singular locus of $M_0$ and $\cK_0$ be the singular locus of $\cK.$ Let $Z$ be $\psi^{-1}(M_0 \setminus \cK_0)$ and $\cC$ be the conic bundle over $Z.$ Now the codimension of $\varphi^{-1}(\mathcal K_0)$ in $\PP E_x$ is $3g-2-(g-1)=2g-1.$ If $g \geq 16$ and we can compute the cohomology groups of the vector bundles over $\PP E_x \backslash \varphi^{-1}(\mathcal K_0),$ then we can compute the cohomology groups $\H^i$ for all $i$ via cohomology extension and Sommese vanishing theorem. The situation can be summarized as in the following diagram. Let $D$ be the inverse image $\psi^{-1}(\cK \setminus \cK_0)$ of $Z.$ We will use the same notation for a morphism (e.g. $\varphi$) and its restriction to an open subset of the domain.

\begin{displaymath}
\xymatrix{
\cC \ar[r]^{\widetilde{\varphi}} \ar[d]_{\tilde{\psi}} & Z \ar[d]^{\psi} \\
\PP E_x \backslash \varphi^{-1}(\mathcal K_0) \ar[r]^{\varphi} \ar[d]_{\pi} & M_0 \setminus \cK_0 \\
M & }
\end{displaymath}

\begin{prop}\cite[Remark 5.17]{NR;78}
The restriction of $T_{\pi}$ to $\varphi^{-1}(m_0)$ for $m_0 \in M_0^s$ is isomorphic to $\omega_{\varphi}.$
\end{prop}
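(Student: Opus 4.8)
The plan is to turn the statement into a computation of degrees of line bundles on the fibre $\Phi:=\varphi^{-1}(m_0)$. The first point is that over $M_0^s$ the morphism $\varphi$ is the Hecke $\PP^1$-fibration of \cite{NR;78}: if $W$ is the stable bundle corresponding to $m_0$, then $\Phi$ is canonically the projective line $\PP(W_x)$ of upward Hecke modifications of $W$ at $x$, and $\varphi$ is smooth of relative dimension $1$ over $M_0^s$. Thus $\Phi\cong\PP^1$, and since $\omega_\varphi$ is the relative dualizing sheaf of a smooth morphism, $\omega_\varphi|_\Phi\cong\omega_\Phi\cong\cO_{\PP^1}(-2)$. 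As $T_\pi|_\Phi$ is likewise a line bundle on $\Phi\cong\PP^1$, it suffices to prove $\deg(T_\pi|_\Phi)=-2$.

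To compute this degree I would restrict the relative tangent sequence $0\to T_\pi\to T_{\PP E_x}\to\pi^*T_M\to 0$ to $\Phi$, obtaining $\deg(T_\pi|_\Phi)=\deg(T_{\PP E_x}|_\Phi)-\deg(\pi^*T_M|_\Phi)$. For the first term, $\Phi$ is a fibre of the smooth morphism $\varphi$, so its normal bundle $N_{\Phi/\PP E_x}\cong\varphi^*T_{M_0}|_\Phi$ is trivial, and the sequence $0\to T_\Phi\to T_{\PP E_x}|_\Phi\to N_{\Phi/\PP E_x}\to 0$ gives $\deg(T_{\PP E_x}|_\Phi)=\deg(T_\Phi)=2$. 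For the second term, using that $M$ has index $2$, i.e. $-K_M\cong\theta^{\otimes2}$, we get $\deg(\pi^*T_M|_\Phi)=2\deg(\pi^*\theta|_\Phi)$.

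It remains to evaluate $\deg(\pi^*\theta|_\Phi)$, which is the $\theta$-degree of the image Hecke curve $\pi(\Phi)\subset M$; this equals $2$ by the degree computation for Hecke curves in \cite{NR;78}. Hence $\deg(\pi^*T_M|_\Phi)=4$ and $\deg(T_\pi|_\Phi)=2-4=-2=\deg(\omega_\varphi|_\Phi)$, so the two line bundles on $\Phi\cong\PP^1$ are isomorphic. Alternatively one may bypass the tangent sequence and use the computation of $\omega_{\PP E_x/M}$ above to write $T_\pi\cong\pi^*\theta^{-1}\otimes\cO(2)$; the claim then reduces to the single degree $\deg(\cO(1)|_\Phi)=0$ for the normalised $E$, which is transparent from restricting the defining Hecke sequence to $\{x\}$, where it becomes $0\to\CC\to W_x\to V_x\to\CC\to 0$.

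The main obstacle is not the degree bookkeeping but importing the precise local structure of the Hecke correspondence over $M_0^s$ from \cite{NR;78}: that $\varphi$ is genuinely a smooth $\PP^1$-fibration there (so that $\omega_\varphi|_\Phi\cong\cO_{\PP^1}(-2)$ and the normal bundle is trivial), and the value $\theta\cdot\pi(\Phi)=2$ of the Hecke curve degree. Some care is also needed because there is no Poincar\'e bundle on $X\times M_0$ (as noted above), so the projective-bundle description of $\varphi$ must be phrased via the adjoint bundle; this does not affect the argument, as $T_\pi$ is intrinsic and the normalisation ambiguity cancels in the combination $T_\pi\cong\pi^*\theta^{-1}\otimes\cO(2)$.
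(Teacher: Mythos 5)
Your proof is correct, but it takes a genuinely different route from the paper's. The paper argues globally: from $\omega_{\varphi}\cong\omega_{\PP E_x}\otimes\varphi^*\omega_{M_0}^{-1}$, the Drezet--Narasimhan formula $\omega_{M_0}\cong\theta_0^{-4}$ of \cite{DN}, and the identity $\varphi^*\theta_0\cong\cO_{\PP E_x}(1)$ (used implicitly), it obtains $\omega_{\varphi}\cong\pi^*\theta^{-1}\otimes\cO(2)$ on all of $\varphi^{-1}(M_0^s)$, which is exactly $T_{\pi}$ by the relative Euler sequence; the fibrewise claim is then the restriction of a global isomorphism of line bundles. You instead prove precisely the fibrewise statement by a degree count on the Hecke line $\Phi=\varphi^{-1}(m_0)\cong\PP^1$: triviality of $N_{\Phi/\PP E_x}$, the index relation $-K_M\cong\theta^{\otimes 2}$, and the Hecke-curve degree $\theta\cdot\pi(\Phi)=2$ (together with birationality of $\pi|_{\Phi}$ onto its image) give $\deg(T_{\pi}|_{\Phi})=2-4=-2=\deg(\omega_{\varphi}|_{\Phi})$. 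What each buys: your route avoids $\omega_{M_0}$ altogether, at the price of importing the Hecke-curve degree; the paper's route needs \cite{DN} and $\varphi^*\theta_0\cong\cO(1)$, but yields the global isomorphism $T_{\pi}\cong\omega_{\varphi}$, which is what the paper actually invokes later (e.g. when $\widetilde{\psi}^*T_{\pi}$ is replaced by $\omega_{\widetilde{\varphi}}$ in the Section 4 cohomology computations), not merely its restriction to fibres. One caveat on your closing "alternative": the equality $\deg(\cO(1)|_{\Phi})=0$ is not transparent from restricting the Hecke sequence to $\{x\}\times\Phi$. That restriction is a four-term exact sequence in which $\cO(1)|_{\Phi}$ appears twice, once as the torsion term and once as the quotient, so taking determinants it cancels and one only learns $\pi^*\theta|_{\Phi}\cong\mu^{\otimes 2}$, where $H(E)|_{X\times\Phi}\cong p_X^*W\otimes p_{\Phi}^*\mu$; pinning down $\deg(\cO(1)|_{\Phi})=0$ is equivalent to $\theta\cdot\pi(\Phi)=2$, or to $\varphi^*\theta_0\cong\cO(1)$, i.e. the same nontrivial input in another guise, so the "alternative" is not actually cheaper than your main argument.
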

\begin{proof}
Drezet and the second named author proved that the canonical divisor of $M_0$ is $\theta_0^{-4}$ in \cite{DN}. Dualizing sheaf of $\PP E_x$ is $\pi^* \theta^{-1} \otimes \cO(-2)$ so the relative dualizing sheaf $\omega_{\varphi}$ is $\pi^* \theta^{-1} \otimes \cO(2).$ Therefore we can check that it is isomorphic to $T_{\pi}.$
\end{proof}

We can generalize many properties of $\varphi$ to $\widetilde{\varphi}$ as follows.

\begin{prop}
The restriction of ${\widetilde{\psi}}^*T_{\pi}$ to a Hecke curve of degenerate type is $\omega_{\widetilde{\varphi}}.$
\end{prop}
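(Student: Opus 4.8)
The plan is to reduce the statement for $\widetilde\varphi$ to the already-established statement for $\varphi$ (the preceding Proposition, which identifies $T_\pi|_{\varphi^{-1}(m_0)} \cong \omega_\varphi$ over $M_0^s$) by pulling everything back along the desingularization $\psi : H_0 \to M_0$ and its lift $\widetilde\psi : \cC \to \PP E_x \backslash \varphi^{-1}(\cK_0)$. The key geometric input is that a \emph{Hecke curve of degenerate type} sits over a point of the singular locus $\cK \setminus \cK_0$, i.e. over a point $m_0 \in M_0 \setminus M_0^s$, which is precisely where $\psi$ does its nontrivial resolving work and where the fibers of $\widetilde\varphi$ differ from those of $\varphi$. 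So the content is to check that the identification survives the passage to the conic-bundle resolution.

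First I would fix a Hecke curve $C$ of degenerate type inside $\cC$ and describe its image $\widetilde\psi(C) \subset \PP E_x$ together with $\widetilde\varphi(C) \subset Z$. Since $\cC$ is the conic bundle over $Z$ and $Z = \psi^{-1}(M_0 \setminus \cK_0)$, a degenerate-type Hecke curve corresponds to a degenerate conic in a fiber of $\widetilde\varphi$ lying over a point of $D = \psi^{-1}(\cK \setminus \cK_0)$. Next I would compute the two sides as line bundles on $C$. On the target side, I would use the formula $\omega_\varphi \cong \pi^*\theta^{-1} \otimes \cO(2)$ established in the previous Proposition, and transport it to $\cC$: the relative dualizing sheaf $\omega_{\widetilde\varphi}$ restricted to $C$ is controlled by the conic-bundle structure of $\widetilde\varphi$, so for a degenerate fibre I would use the standard computation of the dualizing sheaf of a nodal (degenerate) conic together with the compatibility $\widetilde\psi^*\omega_\varphi$ versus $\omega_{\widetilde\varphi}$. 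On the source side, $\widetilde\psi^* T_\pi|_C$ is simply the pullback of the relative tangent bundle of $\pi : \PP E_x \to M$, which by the relative Euler sequence is $\pi^*\theta^{-1} \otimes \cO(2)$ as well.

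The strategy, then, is to show both line bundles on $C$ agree with $\widetilde\psi^*(\pi^*\theta^{-1} \otimes \cO(2))|_C$. Over the smooth locus $M_0^s$ this is exactly the previous Proposition, so the only new point is the behavior along $D$, where $\psi$ is not an isomorphism and the conic fiber degenerates. Here I would argue that $\widetilde\psi$ restricted to $C$ maps $C$ isomorphically (or finitely) onto its image in $\PP E_x$ away from a divisor supported on the node(s), and that the discrepancy between $\widetilde\psi^*\omega_\varphi$ and $\omega_{\widetilde\varphi}$ is precisely accounted for by the ramification/node divisor of the degenerate conic. Matching these discrepancies is what upgrades the generic identification to an identification of line bundles on the whole Hecke curve.

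The main obstacle I expect is the bookkeeping at the node of the degenerate conic: one must correctly relate $\omega_{\widetilde\varphi}$ (the relative dualizing sheaf of the conic bundle, which is well behaved even over degenerate fibres) to the pullback $\widetilde\psi^* T_\pi$ of the tangent direction of $\pi$ through the resolution, and verify that the local contributions at the node cancel rather than introduce an extra twist. This is where the explicit description of degenerate-type Hecke curves and of the conic bundle $\cC \to Z$ from \cite{NR;78} must be invoked carefully; the rest is a formal comparison of line bundles that are already known to agree on the dense open set $\varphi^{-1}(M_0^s)$.
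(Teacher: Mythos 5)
Your proposal has the right skeleton, which is also the paper's: reduce the statement to comparing $\omega_{\widetilde{\varphi}}$ with $\widetilde{\psi}^*\omega_{\varphi} \cong \widetilde{\psi}^*T_{\pi}$, and observe that the only new content beyond the preceding Proposition is the behavior over $D = \psi^{-1}(\cK\setminus\cK_0)$. But the mechanism you propose for that comparison --- a cancellation of ``local contributions at the node'' of the degenerate conic --- rests on a wrong picture of the geometry, and this is a genuine gap. A degenerate Hecke curve $C = l_1\cup l_2$ does not map under $\widetilde{\psi}$ onto a curve touching the exceptional locus only at its node: its image is a pair of lines, one in $\PP_j$ and one in $\PP_{j^{-1}}$, meeting at a point of $\PP_j\cap\PP_{j^{-1}}$, and $\PP_j\cup\PP_{j^{-1}}$ is precisely the fiber of $\varphi^{-1}(\cK\setminus\cK_0)$ over $j\oplus j^{-1}$, i.e.\ the \emph{center} of the blow-up $\widetilde{\psi}$. (This follows already from commutativity: $\varphi(\widetilde{\psi}(C)) = \psi(\widetilde{\varphi}(C)) \in \cK\setminus\cK_0$.) Hence $C$ lies entirely inside the exceptional divisor $\widetilde{E}$ of $\widetilde{\psi}$, and the difference $\omega_{\widetilde{\varphi}}\otimes\widetilde{\psi}^*\omega_{\varphi}^{-1}$, a line bundle of the form $\cO\bigl(a\widetilde{E} - b\,\widetilde{\varphi}^*D\bigr)$, restricts to $C$ as a line bundle that can a priori be nontrivial on each component; it is not a correction concentrated at the node, and no local analysis there can decide whether it is trivial.

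What is actually needed, and what the paper supplies, are two quantitative global inputs your proposal never invokes: (a) the discrepancy of $\psi$, i.e.\ the coefficient of $D$ in $K_Z - \psi^*K_{M_0}$, which is $g-2$ by the Kiem--Li computation \cite{KL}; and (b) the identification of $\widetilde{\psi}$ as the blow-up of the smooth center $\varphi^{-1}(\cK\setminus\cK_0)$ of codimension $3g-2-(2g-1)=g-1$ (cf.\ \cite{NR;78}, \cite{Nitsure}), so that $K_{\cC} - \widetilde{\psi}^*K_{\PP E_x} = (g-2)\widetilde{E}$ by the standard formula \cite{KM}. Then
$$ \omega_{\widetilde{\varphi}}\otimes\widetilde{\psi}^*\omega_{\varphi}^{-1} \cong \cO\bigl((K_{\cC}-\widetilde{\psi}^*K_{\PP E_x}) - \widetilde{\varphi}^*(K_Z - \psi^*K_{M_0})\bigr) \cong \cO\bigl((g-2)(\widetilde{E}-\widetilde{\varphi}^*D)\bigr), $$
and since $\widetilde{\varphi}^*D = \widetilde{E}$ (both reduced with the same support, again by commutativity of the square), the two corrections cancel \emph{globally}, giving $\omega_{\widetilde{\varphi}} \cong \widetilde{\psi}^*\omega_{\varphi} \cong \widetilde{\psi}^*T_{\pi}$, which restricts to the claim on any degenerate Hecke curve. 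Without inputs (a) and (b) the cancellation is just a hope. A genuinely local proof along your lines could in principle be salvaged: $\omega_{\widetilde{\varphi}}|_C \cong \omega_C$ has multidegree $(-1,-1)$, and since $C$ is a tree of two rational curves its Picard group is determined by multidegree; but then you must show $\widetilde{\psi}^*T_{\pi}|_C$ also has multidegree $(-1,-1)$, which requires proving that $\cO_{\PP E_x}(1)=\varphi^*\theta_0$ restricts trivially to each line $\bar{l}_i$ (they are contracted by $\varphi$) and that $\pi^*\theta$ has degree one on each --- facts about degenerate Hecke curves that your proposal does not establish.
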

\begin{proof}
Let us compare the difference between $K_{\cC} - \widetilde{\varphi}^{-1}K_Z$ and $\widetilde{\psi}^{-1} (K_{\PP E_x} - \varphi^{-1} K_{M_0}).$ We have 
$$ K_{\cC} - \widetilde{\varphi}^{-1}K_Z - \widetilde{\psi}^{-1} (K_{\PP E_x} - \varphi^{-1} K_{M_0}) = K_{\cC} - \widetilde{\psi}^{-1} K_{\PP E_x} -\widetilde{\varphi}^{-1} ( K_Z - \psi^{-1} K_{M_0}) $$

The relation between $Z$ and $M_0$ was studied in \cite{CCK, KL, Nitsure}. The map $\psi$ is an isomorphism on $M^s_0.$ Therefore the difference of the canonical divisors of $Z$ and $M_0 \setminus \cK_0$ is a multiple of the divisor $\psi^{-1}(\cK \setminus \cK_0).$ Kiem and Li computed discrepancy of the desingularization $H_0 \to M_0$ in \cite{KL} (note that over $M_0 \setminus \cK_0,$ $H_0$ is isomorphic to the desingularization constructed by Seshadri in \cite{Seshadri}) and we see that the coefficient of $\psi^{-1}(\cK \setminus \cK_0)$ in $K_Z - \psi^{-1}K_{M_0}$ is $g-2.$ From general argument (cf. \cite[Proposition IV-21]{EH}) we see that the morphism $\cC \to \PP E_x \setminus \cK_0$ is the blow-up of $\varphi^{-1}(\cK \setminus \cK_0).$ See \cite[Theorem 8.14]{NR;78} and \cite[Lemma 3.7]{Nitsure} for more details. The codimension of $\varphi^{-1}(\cK \setminus \cK_0)$ is $3g-2-(2g-1)=g-1.$ From \cite{KM} we see that the discrepancy of $\widetilde{\psi}$ is $g-2.$ Therefore we obtain the desired conclusion.
\end{proof}

Now we compute cohomology groups of $H^{i}(M,E_x \otimes E_y \otimes E_z^* \otimes E_w^*).$ We prove the following Theorem.

\begin{theo}
(1) If one of the four points $x,y,z,w \in X$ is different from all the others, then we have
$$ \H^i(M, E_x \otimes E_y \otimes E^*_z \otimes E^*_w)=0 $$
for every $i \geq 3.$ \\
(2) For two distinct points $x,z \in X$ we have
\begin{displaymath}
\H^i(M, E_x \otimes E_x \otimes E^*_z \otimes E^*_z)=\left \{ {\begin{array}{ll} 
\CC & \textrm{if $i=0$,} \\ 
\CC^2 & \textrm{if $i=1$,} \\ 
\CC & \textrm{if $i=2$,} \\ 
0 & \textrm{if $i \geq 3.$} 
\end{array}}
\right.
\end{displaymath}
(3) For $x \in X$ we have 
\begin{displaymath}
\H^i(M, E_x \otimes E_x \otimes E^*_x \otimes E^*_x)=\left \{ {\begin{array}{ll} 
\CC^2 & \textrm{if $i=0$,} \\ 
\CC^3 & \textrm{if $i=1$,} \\ 
\CC & \textrm{if $i=2$,} \\ 
0 & \textrm{if $i \geq 3.$} 
\end{array}}
\right.
\end{displaymath}
\end{theo}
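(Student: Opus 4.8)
The plan is to compute these cohomology groups by pushing the calculation from $M$ up to the projective bundle $\PP E_x$ and then analyzing it via the Hecke correspondence $\varphi : \PP E_x \to M_0$, together with the modular desingularization $\psi : H_0 \to M_0$ that has been set up in this section. The starting observation is that, since $\pi : \PP E_x \to M$ is a $\PP^1$-bundle, cohomology of a bundle $V$ on $M$ can be recovered from cohomology on $\PP E_x$ by using $\pi_* \cO = \cO_M$ and the projection formula: one has $\H^i(M, V) \cong \H^i(\PP E_x, \pi^* V)$. I would therefore rewrite each of $E_x \otimes E_y \otimes E_z^* \otimes E_w^*$ (in the three separate configurations of coincidences among $x,y,z,w$) as the pushforward along $\pi$ of an appropriate twist of tautological and adjoint bundles on $\PP E_x$, using Proposition 3.6 ($\pi^* \ad E_y \cong \ad K(E)_y$ over the stable locus) to trade the factors $E_y, E_z^*, E_w^*$ for bundles that descend through $\varphi$ to $M_0$ (equivalently to the adjoint bundle $\ad G$).

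\textbf{The reduction to the open locus.} The key structural input is the codimension estimate already recorded: $\varphi^{-1}(\cK_0)$ has codimension $2g-1$ in $\PP E_x$, which for $g \geq 16$ is large. First I would argue, via Sommese vanishing (Theorem 3.10) applied as in Proposition \ref{highercohomology}, that all the relevant cohomology vanishes in high degree $i \geq g+15$; this already gives the assertions for very large $i$ and, more importantly, lets me replace $\PP E_x$ by the open set $U = \PP E_x \setminus \varphi^{-1}(\cK_0)$ at the cost of error terms supported on a set of codimension $2g-1$, which contribute to cohomology only in degrees $\geq 2g-1 > g+14$. So on the range of $i$ that is not yet controlled by Sommese vanishing, the computation on $M$ is identical to the computation of $\H^i(U, \varphi^*(\cdots) \otimes \text{twist})$. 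On $U$ the morphism $\varphi$ is well-behaved: over the stable locus it is the $\PP^1$-bundle structure, and over $D = \psi^{-1}(\cK \setminus \cK_0)$ it degenerates along the Hecke curves analyzed in Propositions 4.1--4.2. I would then compute the derived pushforward $R\varphi_*$ (equivalently $R\widetilde\varphi_*$ after base change to the conic bundle $\cC$ on $Z$) of the relevant bundle, using the identification $T_\pi \cong \omega_\varphi$ from Proposition 4.1 and its degenerate-type analogue Proposition 4.2 to control the fiberwise cohomology.

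\textbf{Assembling the three cases.} For part (1), where one point is distinct from the other three, all the "off-diagonal" factors can be absorbed into bundles descending to $M_0$, and the fiberwise cohomology of the remaining tautological twist along $\varphi$ vanishes in every degree, so I expect a clean $\H^i = 0$ for all $i$ (the statement here is only $i \geq 3$, the low-degree part presumably being handled by a separate direct argument or by Serre duality pairing $i$ with $2-i$). For parts (2) and (3), the diagonal coincidences $E_x \otimes E_x$ and $E_z^* \otimes E_z^*$ introduce $\Sym^2$ and $\bigwedge^2$ summands, and the fiberwise cohomology along $\varphi$ no longer vanishes: the surviving contributions should be governed by $\H^\bullet$ of $M_0$ (or $Z$) with coefficients in symmetric/exterior powers of $\ad G$, producing the small groups $\CC, \CC^2, \CC^3$ recorded in the table. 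Concretely I would compute these via a Leray spectral sequence for $\varphi$ (or $\pi$) degenerating for dimension reasons, reading off $\H^0$ from global sections (dimension counts of invariant/anti-invariant tensors), $\H^2$ from Serre duality against $\H^0$ using $\omega_M \cong \theta^{-2}$, and pinning down $\H^1$ by an Euler-characteristic computation combined with the vanishing of all higher terms.

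\textbf{The main obstacle} will be the precise bookkeeping of the degenerate locus $D$ and its conic-bundle structure $\cC \to Z$: one must verify that passing to the open set $U$ does not lose or introduce spurious cohomology in the critical low degrees $0,1,2$, and that the discrepancy computation ($K_Z - \psi^{-1}K_{M_0}$ has coefficient $g-2$, discrepancy of $\widetilde\psi$ equal to $g-2$, from Proposition 4.2) correctly matches the relative dualizing sheaves so that the fiberwise $R\widetilde\varphi_*$ along degenerate Hecke curves behaves as along ordinary ones. In other words, the hard part is not the vanishing in high degree (Sommese) nor the formal pushforward, but controlling the genuinely nonzero contributions in degrees $0,1,2$ coming from the singular strata—ensuring the conic bundle over $Z$ contributes exactly the claimed one-dimensional pieces and no more, which is where the non-hyperelliptic and $g \geq 16$ hypotheses are genuinely used.
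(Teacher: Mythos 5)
Your scaffolding is indeed the paper's: push the computation to $\PP E_x$ via the projection formula, identify $\cO(1)$ with $\varphi^*\theta_0$, pass to $\PP E_x \setminus \varphi^{-1}(\cK_0)$ using the codimension bound $2g-1$ together with Sommese vanishing in degrees $\geq g+15$, and finish on the conic bundle $\widetilde{\varphi} : \cC \to Z$ over the desingularization. But the substance of the theorem lies exactly where your sketch either asserts something false or defers. For part (1), your claim that ``the fiberwise cohomology of the remaining tautological twist along $\varphi$ vanishes in every degree'' fails over the degenerate locus $D = \psi^{-1}(\cK \setminus \cK_0)$: there the fibers of $\widetilde{\varphi}$ are nodal conics $l_1 \cup l_2$, and a Mayer--Vietoris computation gives $\H^1(l_1 \cup l_2, S) \cong \CC^2$ for $S$ the restriction of $\widetilde{\psi}^*\pi^*(E_y \otimes E_z^* \otimes E_w^*)$, so $R^1\widetilde{\varphi}_*$ is a nonzero sheaf supported on $D$. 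The paper's actual mechanism is a two-step Leray argument: it identifies the restriction of $R^1\widetilde{\varphi}_*(\cdots)$ to each fiber $\PP^{g-2} \times \PP^{g-2}$ of $\psi$ as $\cO(0,-1) \oplus \cO(-1,0)$, observes that $R\psi_*$ of this vanishes, and only then concludes $\H^i = 0$; moreover it proves (1) for \emph{all} $i$, which is what the fully-faithfulness argument later needs (there is no Serre duality pairing $i$ with $2-i$ on the $(3g-3)$-dimensional $M$).

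For parts (2) and (3) the gaps are more serious, because the mechanisms you propose for the low degrees are not the ones that work. Serre duality on $M$ pairs $\H^2$ with $\H^{3g-5}$, and the self-duality of $E_x \otimes E_x \otimes E_z^* \otimes E_z^*$ (via $E^* \cong E \otimes \theta^{-1}$) gives no pairing between degrees $0$ and $2$; so $\H^2$ cannot be ``read off from Serre duality against $\H^0$.'' In the paper the degree-two classes arise from relative duality along the conic fibration: for (2) one rewrites $E_x \otimes E_x \otimes E_z^* \otimes E_z^* \cong E_x \otimes E_x^* \otimes E_z \otimes E_z^*$, splits off trace parts, and reduces to $\H^i(\cC, \omega_{\widetilde{\varphi}} \otimes \widetilde{\psi}^*\pi^*\ad E_z)$, which is controlled by the rank-3 bundle $\widetilde{\varphi}_*\widetilde{\psi}^*\pi^*\ad E_z$ and its dual, compared through an exact sequence supported on $D$ (this is how $\H^1(M, \ad E_z)$ reappears, shifted into degree $2$); for (3), after Euler-sequence reductions one needs $\H^i(\PP E_x, T_\pi^{\otimes 2}) \cong \H^{i-1}(Z, W)$ with $W = R^1\widetilde{\varphi}_*\omega_{\widetilde{\varphi}}^{\otimes 2}$, and computing $\H^\bullet(Z,W)$ requires the discriminant analysis of the conic bundle: $c_1(W) = D$ by Grothendieck--Riemann--Roch, the defining line subbundle $\cO(-D) \subset S^2 W^*$, and the sequence $0 \to W \to W^* \otimes \cL^* \to \coker \tilde{q} \to 0$. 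None of this appears in your proposal, and your fallback of pinning down $\H^1$ by an Euler-characteristic computation would require an independent Riemann--Roch evaluation on $M$ (Chern classes of rank-16 bundles against the intersection ring of $M$) that neither you nor the paper carries out. In short, you have correctly reconstructed the reduction steps, but the computations that actually produce $\CC$, $\CC^2$, $\CC^3$ and the vanishing across the degenerate locus are missing, and the shortcut you offer in their place is incorrect.
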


Sometimes, it is convenient to assume that determinant line bundles of vector bundles which $M$ is parametrizing are isomorphic to $\cO(x).$ We need the following Lemma.

\begin{lemm}
Let $M$ (resp. $M'$) be the moduli space of stable vector bundle of rank 2 and determinant isomorphic to a fixed line bundle $L$ (resp. $L'$) of degree 1 on $X$ and $E$ (resp. $E'$) be the normalised Poincar\'{e} bundle on $X \times M$ (resp. $X \times M'$). Let $x, y, z, w$ be four (not necessarily distinct) points of $X.$ Then we have the following isomorphism.
$$ \H^i(M, E_x \otimes E_y \otimes E_z^* \otimes E_w^*) \cong \H^i(M', {E_x'} \otimes {E_y'} \otimes {E_z'}^* \otimes {E_w'}^*) $$
for every $i.$
\end{lemm}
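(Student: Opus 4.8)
The plan is to reduce the cohomology computation for a moduli space $M$ with an arbitrary odd-degree (here degree $1$) determinant $L$ to the specific case where the determinant is $\cO(x)$, since the latter is the form in which the Hecke-transform machinery of Section~3 is set up. The key observation is that all four statements we ultimately want to prove concern the vector bundle $E_x \otimes E_y \otimes E_z^* \otimes E_w^*$ on $M$, and this bundle is insensitive to twisting the Poincar\'e bundle by a line bundle pulled back from $M$. First I would exhibit an explicit isomorphism $M \cong M'$ between the two moduli spaces. Since $L$ and $L'$ both have degree $1$, we may choose a degree-$0$ line bundle $N$ on $X$ with $N^{\otimes 2} \cong L' \otimes L^{-1}$; tensoring a rank-$2$ bundle $V$ with $N$ sends stable bundles of determinant $L$ to stable bundles of determinant $L \otimes N^{\otimes 2} \cong L'$, and this operation is an isomorphism of moduli functors, hence induces an isomorphism $\phi : M \xrightarrow{\sim} M'$.

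Next I would compare the two Poincar\'e bundles under this isomorphism. Pulling back the normalised Poincar\'e bundle $E'$ on $X \times M'$ along $\id_X \times \phi$ gives a Poincar\'e bundle on $X \times M$ for the family $\{V \otimes N\}$, which differs from $E$ by tensoring with $p_X^* N$ and by a line bundle $p_M^* P$ pulled back from $M$ (the usual ambiguity in the choice of Poincar\'e bundle). Thus there is a line bundle $P$ on $M$ such that
$$ (\id_X \times \phi)^* E' \cong E \otimes p_X^* N \otimes p_M^* P. $$
Restricting to a point $x \in X$ and writing $E'_x$ for the restriction of $E'$ to $\{x\} \times M'$, this reads $\phi^* E'_x \cong E_x \otimes P_x$ for a line bundle $P_x$ on $M$ depending on $x$ through the $N$-factor, but with the crucial feature that $P_x$ is a \emph{line bundle on $M$}. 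I would then need to track the normalisation condition $\det(E_x) \cong \theta$ (and likewise for $E'$) to pin down $P_x$ up to the data that actually matters.

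The point is that the line-bundle twists cancel in the tensor combination under consideration. Computing
$$ \phi^*\bigl( E'_x \otimes E'_y \otimes {E'_z}^* \otimes {E'_w}^* \bigr) \cong E_x \otimes E_y \otimes E_z^* \otimes E_w^* \otimes \bigl( P_x \otimes P_y \otimes P_z^{-1} \otimes P_w^{-1} \bigr), $$
I would argue that the residual line bundle $P_x \otimes P_y \otimes P_z^{-1} \otimes P_w^{-1}$ is trivial on $M$: each $P_\bullet$ has the form $P \otimes (\text{contribution of } N)$, and since the number of factors with $E$ matches the number with $E^*$, both the common $P$-ambiguity and the $N$-contributions cancel. (The cleanest way to see the cancellation is to note that $\det$ of the four-fold tensor combination is trivial on $M$ on both sides, because $\det E_x \cong \theta$ is independent of $x$, so any discrepancy line bundle is both a square root issue and forced to be trivial; I would make this precise using $\Pic(M) \cong \ZZ\cdot\theta$.) Once the twist is shown to be trivial, $\phi$ being an isomorphism gives $\H^i(M', E'_x \otimes E'_y \otimes {E'_z}^* \otimes {E'_w}^*) \cong \H^i(M, E_x \otimes E_y \otimes E_z^* \otimes E_w^*)$ for every $i$, as desired.

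The main obstacle I expect is the careful bookkeeping of the normalisation: verifying that the residual line bundle $P_x \otimes P_y \otimes P_z^{-1} \otimes P_w^{-1}$ is genuinely trivial rather than merely numerically trivial. Because $\Pic(M)$ is torsion-free and generated by $\theta$, numerical triviality will suffice, so the real work is to confirm that the degrees (in terms of the $\theta$-grading) of the twisting factors balance. I would handle this by computing $\det$ of each $E_\bullet$ on both sides using the stated normalisation $\det(E_x) \cong \theta$, which makes the $\theta$-degree of $E_x \otimes E_y \otimes E_z^* \otimes E_w^*$ equal to zero and forces the discrepancy line bundle into the torsion subgroup of $\Pic(M)$, hence to be trivial.
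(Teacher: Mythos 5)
Your proposal is correct and follows essentially the same route as the paper: both arguments tensor by a square root of $L^{-1} \otimes L'$ to produce the isomorphism $M \cong M'$, pull back the Poincar\'e bundle, and dispose of the line-bundle ambiguity using $\Pic(M) \cong \ZZ\cdot\theta$. The only cosmetic difference is that the paper invokes the normalisation $\det(E_x) \cong \theta$ to conclude the twist $P$ is trivial outright, whereas you let it cancel in the balanced combination $E_x \otimes E_y \otimes E_z^* \otimes E_w^*$; both are valid.
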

\begin{proof}
Consider the bundle $E \otimes p_X^* \eta$ where $\eta$ is a square root of $L^{-1} \otimes L'$. From the universal property, this bundle defines an isomorphism $u : M \to M'$ such that $E \otimes p_X^* \eta \simeq (id \times u)^*E'$ because both $E$ and $E'$ are normalised Poincar\'{e} bundles. Therefore there is an isomorphism $$ E_x \otimes E_y \otimes E_z^* \otimes E_w^* \simeq u^*({E_x'} \otimes {E_y'} \otimes {E_z'}^* \otimes {E_w'}^*) $$
and from this isomorphism we have $$ \H^i(M, E_x \otimes E_y \otimes E_z^* \otimes E_w^*) \cong \H^i(M', {E_x'} \otimes {E_y'} \otimes {E_z'}^* \otimes {E_w'}^*). $$
\end{proof}

From now on we assume that $M$ is the moduli space of rank 2 stable vector bundles on $X$ whose determinants are isomorphic to $\cO(x).$

\bigskip

\subsection{Cohomology groups $H^{i}(M,E_x \otimes E_y \otimes E_z^* \otimes E_w^*)$ and its consequnces}

Let $x$ be a point in $X$ which is different from $y,z,w \in X.$ First, we need the following Lemma. 

\begin{lemm}
We have $\widetilde{\varphi}_* \widetilde{\psi}^* \pi^*(E_y \otimes E^*_z \otimes E^*_w) = 0.$
\end{lemm}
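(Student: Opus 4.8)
The plan is to prove the stronger statement that the \emph{total} derived pushforward $R\widetilde{\varphi}_*\,\widetilde{\psi}^*\pi^*(E_y\otimes E_z^*\otimes E_w^*)$ vanishes, which of course implies the stated identity. Write $\cV:=\widetilde{\psi}^*\pi^*(E_y\otimes E_z^*\otimes E_w^*)$, a vector bundle on $\cC$. Since $\widetilde{\varphi}\colon\cC\to Z$ is a proper conic bundle, its fibres are one-dimensional, so $R^{\ge 2}\widetilde{\varphi}_*=0$ automatically, and by cohomology and base change (Grauert, applied top-down) it suffices to show that for every $z\in Z$ the fibre $Q=\widetilde{\varphi}^{-1}(z)$ satisfies $H^0(Q,\cV|_Q)=H^1(Q,\cV|_Q)=0$: vanishing of $h^1$ on all fibres forces $R^1\widetilde{\varphi}_*=0$, and then vanishing of $h^0$ forces $R^0\widetilde{\varphi}_*=0$. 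Thus everything reduces to a fibrewise cohomology computation on conics.

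First I would treat the generic fibres, those over $\psi^{-1}(M^s_0)$, where $\psi$ and $\widetilde{\psi}$ are isomorphisms and $Q$ is the smooth conic $\PP(V_x)\cong\PP^1$, the Hecke curve attached to a stable $V\in M^s_0$. For $p\neq x$ the restriction of $\pi^*E_p$ to $Q$ is easy to compute: the Hecke modification at $x$ leaves the underlying bundle unchanged away from $x$, so the universal family over $Q$ restricted to $(X\setminus x)\times Q$ is the constant family $V$ twisted by a single line bundle $\cN$ on $Q\cong\PP^1$, whence $\pi^*E_p|_Q\cong\cN^{\oplus 2}$ for every $p\neq x$. The normalisation $\det E_x\cong\theta$ together with the degree computations for Hecke curves in \cite{NR;78} (i.e. $\deg(\theta|_Q)=2$) give $\deg\cN=1$, so $\cN\cong\cO_{\PP^1}(1)$; since $y,z,w$ are all different from $x$ this yields
$$\cV|_Q\cong\cN^{\oplus 2}\otimes(\cN^{-1})^{\oplus 2}\otimes(\cN^{-1})^{\oplus 2}\cong\cO_{\PP^1}(-1)^{\oplus 8},$$
so that $H^0(Q,\cV|_Q)=H^1(Q,\cV|_Q)=0$, as desired.

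The hard part will be the fibres over the divisor $D=\psi^{-1}(\cK\setminus\cK_0)$. Over $\cK\setminus\cK_0$ the bundle $V$ is strictly semistable, represented by $\xi\oplus\xi^{-1}$ with $\xi^2\not\cong\cO$; here the fibre of $\varphi$ jumps to dimension $g-1$, the map $\widetilde{\psi}$ is the blow-up of $\varphi^{-1}(\cK\setminus\cK_0)$, and the conic $Q$ may degenerate. Because $V$ splits, the universal family on $(X\setminus x)\times Q$ decomposes into $\xi$ and $\xi^{-1}$, each twisted by its own line bundle $\cL_1,\cL_2$ on $Q$, so $\pi^*E_p|_Q\cong\cL_1\oplus\cL_2$ for $p\neq x$ and $\cV|_Q$ becomes the direct sum of the eight line bundles $\cL_a\otimes\cL_b^{-1}\otimes\cL_c^{-1}$ with $a,b,c\in\{1,2\}$. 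The crux is to show that the (multi)degrees of $\cL_1,\cL_2$ along the components of the possibly reducible conic $Q$ are balanced in exactly the way that forces each of these eight summands to have the numerical type of $\cO_{\PP^1}(-1)$ (total degree $-1$, distributed as $(-1,0)$ on a reducible $Q$), since an unbalanced distribution would make either $h^0$ or $h^1$ jump and leave a nonzero sheaf supported on the image of $D$. I expect this to follow from the explicit description of the degenerate Hecke curves and of the conic bundle $\cC$ in \cite[\S 8]{NR;78}, combined with the preceding Proposition identifying $\widetilde{\psi}^*T_\pi$ with $\omega_{\widetilde{\varphi}}$ along such curves, which pins down the relevant intersection numbers. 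Once the fibrewise vanishing is established over $D$ as well, cohomology and base change yields $R\widetilde{\varphi}_*\cV=0$, and in particular $\widetilde{\varphi}_*\cV=0$.
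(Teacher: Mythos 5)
Your computation on the generic fibers is correct and coincides with the paper's first step: over $\psi^{-1}(M_0^s)$ the fibers are smooth conics, the restriction of $\pi^*(E_y\otimes E_z^*\otimes E_w^*)$ to each of them is $\cO_{\PP^1}(-1)^{\oplus 8}$, and the pushforward vanishes there. But the overall strategy is fatally flawed, because the ``stronger statement'' you set out to prove is false: $R^1\widetilde{\varphi}_*\,\widetilde{\psi}^*\pi^*(E_y\otimes E_z^*\otimes E_w^*)$ is a \emph{nonzero} sheaf supported on $D=\psi^{-1}(\cK\setminus\cK_0)$. Indeed, the proposition immediately following this lemma in the paper computes that its restriction to a fiber $\psi^{-1}(j\oplus j^{-1})\cong\PP^{g-2}\times\PP^{g-2}$ is $\cO(0,-1)\oplus\cO(-1,0)$. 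The balanced-degree picture you hope for over $D$ does not hold: on a degenerate fiber $Q=l_1\cup l_2$ the restriction of $\pi^*E_y$ to the component $l_1$ is $j_y\otimes\cO_{l_1}(1)\oplus j_y^{-1}$, i.e.\ the splitting has degrees $(1,0)$ and is not of the form $\cN^{\oplus 2}$; consequently the eight line-bundle summands of $\cV|_{l_1}$ have degrees ranging from $+1$ down to $-2$, and the Mayer--Vietoris sequence for $l_1\cup l_2$ gives $\H^0(Q,\cV|_Q)\cong\H^1(Q,\cV|_Q)\cong\CC^2$. So fibrewise vanishing fails over $D$ for both $h^0$ and $h^1$, and no finer analysis of the degenerate Hecke curves in \cite{NR;78} can rescue a cohomology-and-base-change argument whose hypotheses are simply not satisfied.

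The missing idea is that one should not argue fibrewise over $D$ at all; the point of the lemma is precisely that the degenerate locus can be bypassed for $R^0$ (though not for $R^1$). The paper's proof consists of your first step plus one observation: by \cite[Corollary 1.9]{Hartshorne}, the pushforward $\widetilde{\varphi}_*\,\widetilde{\psi}^*\pi^*(E_y\otimes E_z^*\otimes E_w^*)$ is a torsion-free sheaf on $Z$ --- equivalently, a section of a vector bundle on the irreducible variety $\cC$ over $\widetilde{\varphi}^{-1}(U)$ that vanishes on the dense open subset $\widetilde{\varphi}^{-1}(U\cap\psi^{-1}(M_0^s))$ vanishes identically. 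A torsion-free sheaf that is zero on a dense open subset is zero, which gives the lemma. Note that this argument is silent about $R^1$, which is genuinely nonzero along $D$: the jump $h^0(Q,\cV|_Q)=2$ on degenerate fibers is exactly the phenomenon that torsion-freeness of the direct image tolerates, whereas your proposed reduction cannot.
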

\begin{proof}
Let $m_0$ be a point in $M_0^s.$ When we restrict $\pi^*(E_y \otimes E^*_z \otimes E^*_w)$ to $\varphi^{-1}(m_0)$ is $\cO(-1)^{\oplus 8}$ (cf. \cite[Proposition 3.4]{Narasimhan1}) hence $\varphi_*\pi^*(E_y \otimes E^*_z \otimes E^*_w)=0$ over $M^s_0.$ By \cite[Corollary 1.9]{Hartshorne}, $\widetilde{\varphi}_* \widetilde{\psi}^* \pi^*(E_y \otimes E^*_z \otimes E^*_w)$ is torsion free on $Z,$ hence it is zero.
\end{proof}

Next we need to compute $R^1 \widetilde{\varphi}_* \widetilde{\psi}^* \pi^*(E_y \otimes E^*_z \otimes E^*_w).$ Note that it is supported on $\psi^{-1}(\cK \setminus \cK_0).$ Let $\cP$ be the Poincar\'e bundle on $X \times J$ and $\pi_J : X \times J \to J$ be the projection to $J.$ In order to describe $R^1 \widetilde{\varphi}_* \widetilde{\psi}^* \pi^*(E_y \otimes E^*_z \otimes E^*_w),$ let consider the following situation. 

\begin{displaymath}
\xymatrix{ 
& X \times J \ar[ld]_{\pi_X} \ar[rd]^{\pi_J} & \\
X & & J }
\end{displaymath}

We have the following sequence.
$$ 0 \to \cP^{\otimes 2} \otimes \pi_X^*{\cO(-x)} \to \cP^{\otimes 2} \to \cP^{\otimes 2}|_{x \times J} \to 0 $$

By pushforwarding the above sequence to $J$ we have the following short exact sequence and let $\cE_+ \cong R^1 {\pi_J}_*( \cP^{\otimes 2} \otimes \pi_X^*{\cO(-x)} )$ and $\cF_+ \cong R^1 {\pi_J}_*( \cP^{\otimes 2} ).$ 
$$ 0 \to \cP_x^{\otimes 2} \to R^1 {\pi_J}_*( \cP^{\otimes 2} \otimes \pi_X^*{\cO(-x)} ) \to R^1 {\pi_J}_*( \cP^{\otimes 2} ) \to 0 $$

Similarly, we have the following sequence and let $\cE_- \cong R^1 {\pi_J}_*( \cP^{\otimes -2} \otimes \pi_X^*{\cO(-x)} )$ and $\cF_- \cong R^1 {\pi_J}_*( \cP^{\otimes -2} ).$ 
$$ 0 \to \cP_x^{\otimes -2} \to R^1 {\pi_J}_*( \cP^{\otimes -2} \otimes \pi_X^*{\cO(-x)} ) \to R^1 {\pi_J}_*( \cP^{\otimes -2} ) \to 0 $$

Let $J' = J \setminus \cK_0$ and let us consider the above bundles over $J'.$ (We will use the same notation to denote the restricted bundles over $J'.$) Then the lift of $D$ to $J'$ is isomorphic to $\PP (\cF^*_+) \times_{J'} \PP(\cF^*_-)$ and the lift of $\phi^{-1}$ to $J'$ is isomorphic to the union of $\PP (\cE^*_+)$ and $\PP (\cE^*_-).$ Let us consider the following diagram.
\begin{displaymath}
\xymatrix{ 
& X \times \PP(\cE^*_+) \ar[ld]_{\pi_X} \ar[rd]^{\pi_{\PP(\cE^*_+)}} & \\
X & & \PP(\cE^*_+) }
\end{displaymath}

Over $X \times \PP(\cE^*_+)$ we have the following sequence 
$$ 0 \to \cP \otimes \pi_{\PP(\cE^*_+)}^* \cO_{\PP(\cE^*_+)}(1) \to (1 \times \pi)^*E|_{X \times \PP(\cE^*_+)} \to \cP^{-1} \otimes \pi^*_X \cO(x) \to 0 $$
where we use abuse of notation by using $\cP$ to denote the pullback of $\cP$ from $X \times J'$ to $X \times \PP(\cE^*_+)$ and also $(1 \times \pi)^*E$ to denote the pullback of the $(1 \times \pi)^*E$ on $X \times \PP E_x$ to $X \times \PP(\cE^*_+).$ Note that $\PP(\cE^*_+)$ parametrizes stable vector bundles on $X$ whose determinants are isomorphic to $\cO(x).$ \\

Similarly, we have the following sequence 
$$ 0 \to \cP^{-1} \otimes \pi_{\PP(\cE^*_-)}^* \cO_{\PP(\cE^*_-)}(1) \to (1 \times \pi)^*E|_{X \times \PP(\cE^*_-)} \to \cP \otimes \pi^*_X \cO(x) \to 0 $$
over $X \times \PP(\cE^*_-)$ where we use abuse of notation by using $\cP$ to denote the pullback of $\cP$ from $X \times J'$ to $X \times \PP(\cE^*_-)$ and also $(1 \times \pi)^*E$ to denote the pullback of the $(1 \times \pi)^*E$ on $X \times \PP E_x$ to $X \times \PP(\cE^*_-).$ Note that $\PP(\cE^*_-)$ parametrizes stable vector bundles on $X$ whose determinants are isomorphic to $\cO(x).$ \\

The map $\Bl_{\PP (\cP_x^{\otimes -2}) } \PP(\cE^*_+) \to \PP(\cF^*_+)$ gives a $\PP^1$-fibration parametrized by $\PP(\cF^*_+).$ Similarly, the map $\Bl_{\PP (\cP_x^{\otimes 2}) } \PP(\cE^*_-) \to \PP(\cF^*_-)$ gives a $\PP^1$-fibration parametrized by $\PP(\cF^*_-).$ The union of the two fibrations is isomorphic to the lift of $\widetilde{\varphi}^{-1}(D) \to D$ to $J'.$ Over a point $j \in J,$ we can describe the geometry as follows. \\

Let $V$ be a $n+1$-dimensional vector space and $l$ be a 1-dimensional subspace in $V.$ There there is a natural projection $\pi_l : \PP V \dashrightarrow \PP (V/l)$ be the point $[l] \in \PP V$ corresponding to $l$ and we can make the projection into morphism by taking blow-up $\PP V$ at $[l] \in \PP V.$ Let $E_l$ be the exceptional divisor of the blow-up. We use abuse of notation to denote the morphism $\pi_l : Bl_{[l]} \PP V \to \PP (V/l).$ The following description is standard.

\begin{lemm}
The morphism $\pi_l : Bl_{[l]} \PP V \to \PP (V/l)$ is isomorphic to the natural projection from projective bundle $\PP( \cO \oplus \cO(1)) \to \PP (V/l).$ The projection induces an isomorphism between $E_l$ and $\PP (V/l).$
\end{lemm}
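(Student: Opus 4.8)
The plan is to realize $\Bl_{[l]}\PP V$ concretely as the closure $\Gamma$ of the graph of the rational map $\pi_l$ inside $\PP V \times \PP(V/l)$, and then to identify its second projection with the asserted projective bundle. Explicitly, I would set
$$ \Gamma = \{\, ([v],[\bar w]) \in \PP V \times \PP(V/l) \ :\ v \bmod l \in \langle \bar w\rangle \,\}, $$
the incidence correspondence. Since the base locus of the linear system defining $\pi_l$ (the hyperplanes through $[l]$, spanned by the coordinates $x_1,\dots,x_n$) is the reduced point $[l]$, whose ideal in the chart $x_0=1$ is the maximal ideal $\mathfrak{m}_{[l]}=(x_1/x_0,\dots,x_n/x_0)$, the first projection $p_1:\Gamma\to\PP V$ is an isomorphism away from $[l]$ and identifies $\Gamma$ with the honest blow-up $\Bl_{[l]}\PP V$, with exceptional divisor $E_l=p_1^{-1}([l])=\{[l]\}\times\PP(V/l)$. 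In particular the second projection $p_2=\pi_l$ restricts to an isomorphism $E_l \xrightarrow{\sim} \PP(V/l)$, which already establishes the second assertion.

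Next I would compute the fibres of $p_2:\Gamma\to\PP(V/l)$. Over a point $[\bar w]$ the fibre is $\{[v] : v\in l+\langle w\rangle\}=\PP(l+\langle w\rangle)\cong\PP^1$, so $p_2$ is a $\PP^1$-bundle. To exhibit it as a projective bundle, let $\cO(-1)\hookrightarrow (V/l)\otimes\cO_{\PP(V/l)}$ be the tautological subbundle and let $\cW\subset V\otimes\cO_{\PP(V/l)}$ be the preimage of $\cO(-1)$ under the surjection $V\otimes\cO_{\PP(V/l)}\to (V/l)\otimes\cO_{\PP(V/l)}$. The fibre of $\cW$ over $[\bar w]$ is exactly the $2$-plane $l+\langle w\rangle$, so $\Gamma\cong\PP(\cW)$ compatibly with $p_2$. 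By construction $\cW$ sits in a short exact sequence
$$ 0 \to l\otimes\cO_{\PP(V/l)} \to \cW \to \cO(-1) \to 0, $$
in which $l\otimes\cO_{\PP(V/l)}\cong\cO_{\PP(V/l)}$ because $\dim l=1$.

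Finally I would split this extension. A choice of complement $V=l\oplus V'$ gives a linear retraction $V\to l$ whose restriction to $\cW$ splits the sequence; alternatively the extension class lies in $\H^1(\PP(V/l),\cO(1))=\H^1(\PP^{n-1},\cO(1))=0$. Either way $\cW\cong\cO\oplus\cO(-1)$, hence $\Gamma\cong\PP(\cO\oplus\cO(-1))$, and since $\PP(\cE)\cong\PP(\cE\otimes\cL)$ for any line bundle $\cL$, twisting by $\cO(1)$ yields $\Gamma\cong\PP(\cO\oplus\cO(1))$ with $p_2$ becoming the structure projection $\PP(\cO\oplus\cO(1))\to\PP(V/l)$, as claimed. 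The geometric content here is entirely standard; the only points genuinely requiring care are the verification that $\Gamma$ is the blow-up (reducedness of the base scheme at $[l]$) and the bookkeeping of the sub-versus-quotient convention for $\PP(\,\cdot\,)$, which governs the harmless $\cO(\pm 1)$ twist appearing in the statement.
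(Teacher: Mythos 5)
Your proposal is correct, and every step holds up: the graph closure/incidence correspondence is the blow-up precisely because the base scheme of the linear system of hyperplanes through $[l]$ is the reduced point (the linear forms in $(V/l)^* \subset V^*$ generate the full maximal ideal $\mathfrak{m}_{[l]}$ twisted by $\cO(1)$); the bundle $\cW$ is exactly the fibrewise span $l + \langle w \rangle$; the extension $0 \to \cO \to \cW \to \cO(-1) \to 0$ splits either by your retraction argument or because $\Ext^1(\cO(-1),\cO) \cong \H^1(\PP(V/l),\cO(1)) = 0$; and the final twist $\PP(\cO \oplus \cO(-1)) \cong \PP(\cO \oplus \cO(1))$ is legitimate since projectivization is insensitive to tensoring by a line bundle. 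Your route differs in style from the paper's: the paper disposes of the lemma in one sentence, invoking "standard coordinates of $\PP V$ and the definition of blow-up," i.e., a direct chart computation in which one sees the $\PP^1$-bundle structure and the ruling explicitly. Your argument is coordinate-free: it packages the same geometry into the tautological subbundle and a split extension, which has two advantages — it makes the provenance of the $\cO(\pm 1)$ twist and the sub-versus-quotient convention completely transparent (the one point where a coordinate computation can silently go wrong), and it hands you the second assertion for free, since the graph description gives $E_l = p_1^{-1}([l]) = \{[l]\} \times \PP(V/l)$ on which $p_2$ is visibly an isomorphism. The coordinate approach, by contrast, is shorter if one is willing to leave the verification to the reader, which is what the paper does.
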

\begin{proof}
We can use standard coordinate system of $\PP V$ and definition of blow-up to obtain the claim.
\end{proof}

We can compute push forward of some sheaves using the above description.

\begin{lemm}
Let $H$ be the pullback of the hyperplane of $\PP V$ of the blow-up. Then ${R \pi_l}_* \cO(-2H)$ is isomorphic to $\cO(-1)[-1].$
\end{lemm}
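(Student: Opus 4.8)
The plan is to reduce everything to the pushforward of a line bundle along a $\PP^1$-bundle. By the previous Lemma we identify $\Bl_{[l]}\PP V$ with the projective bundle $p := \pi_l : P \to W$, where $W := \PP(V/l)$ and $P \cong \PP_W(\cO_W \oplus \cO_W(1))$, and where the exceptional divisor $E_l$ is the section given by the isomorphism $p|_{E_l} : E_l \xrightarrow{\sim} W$. Write $\mathcal{E} := \cO_W \oplus \cO_W(1)$, so $\det \mathcal{E} = \cO_W(1)$ and the relative dualizing sheaf is $\omega_{P/W} \cong \cO_P(-2) \otimes p^*\cO_W(1)$.

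The first step is to express $\cO(-2H)$ through the projective-bundle structure, the key claim being $\cO(H) \cong \cO_P(1)$. Since a fiber of $\pi_l$ is the proper transform of a line through $[l]$, on which the blow-down $b$ restricts to an isomorphism onto a line of $\PP V$, the class $H = b^*\cO_{\PP V}(1)$ has degree $1$ on every fiber, as does $\cO_P(1)$; thus $\cO(H)$ and $\cO_P(1)$ differ by a class pulled back from $W$. To pin this down I use that hyperplanes through $[l]$ give $\pi_l^*\cO_W(1) \cong \cO(H - E_l)$, so that $p^*\cO_W(1) = \cO(H - E_l)$, together with the identification of $E_l$ as the section whose normal bundle is $\cO_{E_l}(E_l) \cong \cO_W(-1)$, i.e. the section cut out by the sub-line-bundle $\cO_W(1)$ of $\mathcal{E}$. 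The latter gives $[E_l] = \xi - p^*c_1(\cO_W(1))$ in $\Pic(P)$, where $\xi := c_1(\cO_P(1))$, and combining the two relations yields $\xi = H$, i.e. $\cO(H) \cong \cO_P(1)$. Hence $\cO(-2H) \cong \cO_P(-2)$.

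The second step computes $R\pi_{l*}\cO_P(-2)$. On each fiber $\cO_P(-2)$ restricts to $\cO_{\PP^1}(-2)$, so $p_*\cO_P(-2) = 0$ and, by cohomology and base change, $R^1 p_*\cO_P(-2)$ is a line bundle on $W$. To identify it I apply relative Serre duality:
$$ R^1 p_*\cO_P(-2) \cong \bigl(p_*(\cO_P(2) \otimes \omega_{P/W})\bigr)^\vee \cong \bigl(p_*\, p^*\cO_W(1)\bigr)^\vee \cong \cO_W(1)^\vee = \cO_W(-1), $$
using $\cO_P(2) \otimes \omega_{P/W} \cong p^*\cO_W(1)$ and the projection formula. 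Therefore $R\pi_{l*}\cO(-2H)$ has cohomology only in degree $1$, equal to $\cO_W(-1)$, which is exactly $\cO(-1)[-1]$.

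The only delicate point is the identification $\cO(H) \cong \cO_P(1)$ in the first step: it requires fixing the Grothendieck convention for $\PP_W(\mathcal{E})$ and checking which of the two natural sections of the $\PP^1$-bundle is the exceptional divisor $E_l$, which I do via its normal bundle. Once this bookkeeping is settled, the computation of the pushforward is routine $\PP^1$-bundle cohomology.
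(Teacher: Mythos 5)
Your proof is correct, but it follows a genuinely different route from the paper's. The paper never identifies $\cO(H)$ with the tautological bundle of the $\PP^1$-bundle; instead it tensors the standard sequence $0 \to \cO \to \cO(E_l) \to \cO_{E_l}(E_l) \to 0$ by $\cO(-2H)$ to get
$$ 0 \to \cO(-2H) \to \cO(-2H+E_l) \to \cO_{E_l}(E_l) \to 0, $$
observes that $\cO(-2H+E_l)$ has degree $-1$ on every fiber of $\pi_l$ (each fiber meets $E_l$ once and maps to a line in $\PP V$), so that its full derived pushforward vanishes, and then reads off from the connecting isomorphism that ${R\pi_l}_*\cO(-2H) \cong {R\pi_l}_*\bigl(\cO_{E_l}(E_l)\bigr)[-1] \cong \cO(-1)[-1]$, using $\cO_{E_l}(E_l) \cong \cO_{E_l}(-1)$ and the identification $E_l \cong \PP(V/l)$ induced by $\pi_l$. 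That argument is shorter and avoids exactly the bookkeeping you flag as delicate: it needs no choice of Grothendieck convention, no determination of which section of $\PP(\cO \oplus \cO(1))$ is $E_l$, and no relative Serre duality or base change. What your approach buys in exchange is systematics: once you have pinned down $\cO(H) \cong \cO_P(1)$ (and your verification via the normal bundle $\cO_{E_l}(E_l) \cong \cO_W(-1)$ and the relation $\pi_l^*\cO_W(1) \cong \cO(H-E_l)$ is sound), the pushforward of every power $\cO(mH)$ follows from standard $\PP^1$-bundle cohomology in one stroke — including the statement ${R\pi_l}_*\cO(H) \cong \cO \oplus \cO(1)$, which the paper proves later by a second, separate exact-sequence argument.
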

\begin{proof}
From the short exact sequence
$$ 0 \to \cO \to \cO({E_l}) \to \cO_{E_l}({E_l}) \to 0 $$
we have the following exact sequence.
$$ 0 \to \cO(-2H) \to \cO(-2H+{E_l}) \to \cO_{E_l}({E_l}) \to 0 $$

Because restriction of $\cO(-2H+{E_l})$ to each fiber is $\cO(-1)$ we see that $R{\pi_l}_* \cO(-2H+{E_l})$ is $0.$ By pushing forward the second exact sequence we have ${R \pi_l}_* \cO(-2H)$ is isomorphic to $\cO(-1)[-1].$
\end{proof}

From the above consideration we have the following isomorphisms.

\begin{prop}
Let $j \oplus j^{-1} \in \cK \setminus \cK_0$ and consider the fiber $\psi^{-1}(j \oplus j^{-1}) \cong \PP^{g-2} \times \PP^{g-2}.$
For three points $y,z,w \in X$ which are distinct from $x,$ the restriction of $R^1\widetilde{\varphi}_*( \widetilde{\psi}^* \pi^*(E_y \otimes E_z^* \otimes E_w^*))$ on the fiber is isomorphic to $\cO(0,-1) \oplus \cO(-1,0).$
\end{prop}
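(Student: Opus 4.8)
The plan is to reduce the statement to a fibrewise computation on the two $\PP^1$-fibrations described above and then to feed it into the two preceding lemmas for $\pi_l$. Fix a point $j\oplus j^{-1}\in\cK\setminus\cK_0$, so that $\psi^{-1}(j\oplus j^{-1})\cong\PP(\cF^*_+)_j\times\PP(\cF^*_-)_j\cong\PP^{g-2}\times\PP^{g-2}$, and work by cohomology and base change over this fibre. Over it the map $\widetilde{\varphi}$ is the union of the two $\PP^1$-fibrations $\Bl_{[l_-]}\PP(\cE^*_+)_j\to\PP(\cF^*_+)_j$ and $\Bl_{[l_+]}\PP(\cE^*_-)_j\to\PP(\cF^*_-)_j$, where $[l_-]=\PP(\cP_x^{\otimes-2})_j$ and $[l_+]=\PP(\cP_x^{\otimes2})_j$ are the centres of the blow-ups. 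Since the whole picture is symmetric under interchanging the subscripts $+$ and $-$ (which also swaps the two $\PP^{g-2}$ factors), it suffices to carry out the computation on the $+$ fibration, read off the summand $\cO(-1,0)$, and obtain $\cO(0,-1)$ from the $-$ fibration by symmetry.

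First I restrict the displayed sequence for $(1\times\pi)^*E$ over $X\times\PP(\cE^*_+)$, together with its dual, to $\{y\},\{z\},\{w\}$ in the $X$-factor. Because $y,z,w\neq x$ the factor $\pi_X^*\cO(x)$ restricts trivially, and I obtain on $\PP(\cE^*_+)$ the two-term filtrations
$$ 0\to\cP_y\otimes\cO(1)\to\pi^*E_y\to\cP_y^{-1}\to0, $$
$$ 0\to\cP_z\to\pi^*E_z^*\to\cP_z^{-1}\otimes\cO(-1)\to0, $$
$$ 0\to\cP_w\to\pi^*E_w^*\to\cP_w^{-1}\otimes\cO(-1)\to0, $$
where $\cO(1)=\cO_{\PP(\cE^*_+)}(1)$ and the line bundles $\cP_y,\cP_z,\cP_w$ are pulled back from $J'$, hence restrict to trivial bundles on each fibre $\PP(\cE^*_+)_j$. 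Tensoring the three filtrations gives a filtration of $\pi^*(E_y\otimes E_z^*\otimes E_w^*)|_{\PP(\cE^*_+)}$ with eight graded pieces, each of the form $(\text{bundle from }J')\otimes\cO(a)$ with relative twist $a\in\{1,0,-1,-2\}$; the unique piece with $a=-2$ is the final quotient $\cP_y^{-1}\cP_z^{-1}\cP_w^{-1}\otimes\cO(-2)$. Pulling back along $\widetilde{\psi}$ replaces $\cO(1)$ by $\cO(H)$ in the sense of the preceding structural lemma.

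Next I push down along $\widetilde{\varphi}$, which on the $+$ fibration is exactly the projection $\pi_l$ of that lemma. On each $\PP^1$-fibre the class $H$ has degree $1$, so a graded piece with $a\geq0$ has vanishing $R^1\pi_{l*}$, a piece with $a=-1$ has $R\pi_{l*}\cO(-H)=0$ altogether, and only the $a=-2$ piece contributes to $R^1$; by the preceding computation, $R^1\pi_{l*}\cO(-2H)\cong\cO_{\PP(\cF^*_+)}(-1)$. Writing $K$ for the subsheaf given by the other seven pieces, the long exact sequences of $R\pi_{l*}$ (together with $R^{\geq2}\pi_{l*}=0$ by relative dimension one, and $R^1\pi_{l*}$ vanishing on every graded piece of $K$) give $R^1\pi_{l*}K=0$, whence $R^1\pi_{l*}\bigl(\pi^*(E_y\otimes E_z^*\otimes E_w^*)|_{\PP(\cE^*_+)}\bigr)\cong\cO_{\PP(\cF^*_+)}(-1)$, the $J'$-factors being trivial on the fibre over $j$. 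On the product this is $\cO(-1,0)$, and the $-$ fibration yields $\cO(0,-1)$ symmetrically.

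Finally I assemble the two contributions. Over $D$ the fibre $\widetilde{\varphi}^{-1}$ of a point of $\PP(\cF^*_+)_j\times\PP(\cF^*_-)_j$ is a nodal union of one ruling of each $\PP^1$-fibration, the first depending only on the $+$ coordinate and the second only on the $-$ coordinate, so each $R^1$ contribution is pulled back from a single factor. Running the Mayer--Vietoris (partial-normalization) sequence for $R^1\widetilde{\varphi}_*$ along the node combines $\cO(-1,0)$ and $\cO(0,-1)$, and a check that the connecting map vanishes yields the direct sum $\cO(0,-1)\oplus\cO(-1,0)$. The main obstacle is precisely this last step: one must verify that along the node the sections of the restricted bundle on the two components surject onto the fibre, so that the connecting homomorphism kills no part of either $\cO(-2)$-contribution and the result splits rather than forming a nonsplit extension. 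Confirming the triviality of the $J'$-line-bundle factors on each fibre and that the twist bookkeeping isolates exactly the $\cO(-2H)$ graded piece requires care, but the reduction to the two lemmas for $\pi_l$ is otherwise routine once the filtrations above are in place.
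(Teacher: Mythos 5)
Your strategy is sound, and it is essentially the paper's own argument with the two steps performed in the opposite order: the paper first runs Mayer--Vietoris fibrewise on each degenerate conic $l_1\cup l_2$ and then globalizes the two resulting lines of $H^1$'s using the lemma $R^1\pi_{l*}\cO(-2H)\cong\cO(-1)$, whereas you first compute $R^1$ of each $\PP^1$-fibration relatively (via the filtration whose only $R^1$-contributing graded piece is the $\cO(-2H)$ one) and then glue by a relative Mayer--Vietoris. Your twist bookkeeping, the identification of the two components of $\widetilde{\varphi}^{-1}(D)$ over $j$, and the appeal to the two $\pi_l$ lemmas are all correct.

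The genuine gap is the step you yourself call ``the main obstacle'' and then leave unproven: writing $Y_1\cup Y_2$ for $\widetilde{\varphi}^{-1}(D)$ (the union of your two $\PP^1$-fibrations) and $S$ for the sheaf being pushed forward, you need surjectivity of the restriction map $\widetilde{\varphi}_*(S|_{Y_1})\oplus\widetilde{\varphi}_*(S|_{Y_2})\to S|_{Y_1\cap Y_2}$ along the locus of nodes, equivalently the vanishing of the connecting homomorphism. This is not a side issue to be deferred: it is exactly the computation that constitutes the body of the paper's proof. The paper verifies it fibrewise: writing $S|_{l_1}\cong(j_y\cO(1)\oplus j_y^{-1})\otimes(j_z\oplus j_z^{-1}\cO(-1))\otimes(j_w\oplus j_w^{-1}\cO(-1))$, and the same on $l_2$ with $j$ and $j^{-1}$ interchanged, one checks that $H^0(l_1,S|_{l_1})\oplus H^0(l_2,S|_{l_2})\to H^0(l_1\cap l_2,S|_{l_1\cap l_2})$ hits, from $l_1$, exactly the four summands $j_yj_zj_w$, $j_yj_zj_w^{-1}$, $j_yj_z^{-1}j_w$, $j_y^{-1}j_zj_w$ of the eight-dimensional fibre at the node (these are the graded pieces of nonnegative twist, whose sections evaluate surjectively at the node), and from $l_2$ the complementary four; hence the map is surjective with two-dimensional kernel, and fibrewise surjectivity upgrades to sheaf surjectivity by base change and Nakayama. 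Your own filtrations contain all the data needed for this check, so the gap is closable, but as written the proof stops short at its decisive point. Note also that your worry about a nonsplit extension is misplaced: since $\widetilde{\varphi}$ is finite (indeed an isomorphism) on the locus of nodes, $R^1\widetilde{\varphi}_*(S|_{Y_1\cap Y_2})=0$, so once the connecting map vanishes the long exact sequence gives an isomorphism $R^1\widetilde{\varphi}_*S\cong\cO(-1,0)\oplus\cO(0,-1)$ outright; had surjectivity failed, the defect would appear as an extra subsheaf of $R^1\widetilde{\varphi}_*S$ (the cokernel of the restriction map), not as a nonsplit extension of the two line bundles.
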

\begin{proof}
Let us consider the following Mayer-Vietoris sequence where $l_1 \cup l_2$ is the fiber of $\widetilde{\psi}$ over a point in $\PP^{g-2} \times \PP^{g-2}$ over $j \oplus j^{-1}$ and $S$ is the restriction of $\widetilde{\psi}^* \pi^*(E_y \otimes E_z^* \otimes E_w^*)$ to $l_1 \cup l_2.$
$$ 0 \to \H^0(l_1\cup l_2, S) \to H^0(l_1,
S|_{l_1})\oplus H^0(l_2, S|_{l_2}) \to H^0(l_1\cap l_2,
S|_{l_1\cap l_2}) $$
$$ \to \H^1(l_1\cup l_2, S) \to H^1(l_1,
S|_{l_1})\oplus H^1(l_2, S|_{l_2}) \to H^1(l_1\cap l_2,
S|_{l_1\cap l_2}) \to 0 $$

The morphism $H^0(l_1,S|_{l_1})\oplus H^0(l_2, S|_{l_2}) \to H^0(l_1\cap l_2,
S|_{l_1\cap l_2})$ is as follows.
$$ \H^0( ( {j_y} \otimes \cO_{l_1}(1) \oplus {j_y}^{-1}) \otimes ( {j_z} \oplus {j_z}^{-1} \otimes \cO_{l_1}(-1)) \otimes ( {j_w} \oplus {j_w}^{-1} \otimes \cO_{l_1}(-1) ) ) $$
$$ \oplus \H^0( ( {j_y}^{-1} \otimes \cO_{l_1}(1) \oplus {j_y}) \otimes ( {j_z}^{-1} \oplus {j_z} \otimes \cO_{l_1}(-1)) \otimes ( {j_w}^{-1} \oplus {j_w} \otimes \cO_{l_1}(-1))) $$
$$ \to (j_y \oplus j_y^{-1}) \otimes (j_z \oplus j_z^{-1}) \otimes (j_w \oplus j_w^{-1}) $$

This map is surjective and the kernel is generated by $j_y \otimes j_z  \otimes j_w \oplus j_y^{-1} \otimes j_z^{-1} \otimes j_w^{-1}.$
Therefore we see that $\H^1(l_1 \cup l_2, S) \cong \CC^2$ and isomorphic to $\H^1(l_1,S|_{l_1})\oplus H^1(l_2, S|_{l_2}).$ It is isomorphic to the following cohomology group.
$$ \H^1(( {j_y} \otimes \cO_{l_1}(1) \oplus {j_y}^{-1}) \otimes ( {j_z} \oplus {j_z}^{-1} \otimes \cO_{l_1}(-1)) \otimes ( {j_w} \oplus {j_w}^{-1} \otimes \cO_{l_1}(-1))) $$
$$ \oplus \H^1( ( {j_y}^{-1} \otimes \cO_{l_1}(1) \oplus {j_y}) \otimes ( {j_z}^{-1} \oplus {j_z} \otimes \cO_{l_1}(-1)) \otimes ( {j_w}^{-1} \oplus {j_w} \otimes \cO_{l_1}(-1))) $$

Therefore we have a canonical isomorphism $\H^1(l_1 \cup l_2, S) \cong j_y \otimes j_z^{-1}  \otimes j_w^{-1} \oplus j_y^{-1} \otimes j_z \otimes j_w.$ \\

Let us discuss about the morphism $\widetilde{\varphi}.$ See \cite{Narasimhan2} for more details. Let $z \in \psi^{-1} (\cK \setminus \cK_0)$ such that $\psi(z) = j \oplus j^{-1}$ and then $z$ corresponds to a point $(a,b)$ of $\PP H^1(X,j^{\otimes 2}) \times \PP H^1(X,j^{\otimes -2}).$ Then $\widetilde{\varphi}^{-1}(z)$ is a pair of lines one in $\PP_{j}=\PP \H^1(X, j^{\otimes 2} \otimes \cO(-x))$ and another in $\PP_{j^{-1}}=\PP \H^1(X, j^{\otimes -2} \otimes \cO(-x)).$ And all the lines are passing through $\PP_{j} \cap \PP_{j^{-1}}.$ Let us fix $b$ and vary $a  \in \PP \H^1(X, j^{\otimes 2}).$ Then the lines parametrized by $\PP \H^1(X,j^{\otimes 2})$ are proper transforms of lines in $\PP_j$ passing through the point $\PP_j \cap \PP_{j^{-1}}.$ Applying the previous Lemma to the family of degenerating conics we obtain the desired conclusion. 
\end{proof}

\begin{coro}
We have the following vanishing
$$ R \psi_* R^1\widetilde{\varphi}_*( \widetilde{\psi}^* \pi^*(E_y \otimes E_z^* \otimes E_w^*)) = 0 $$
\end{coro}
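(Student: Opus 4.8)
The plan is to compute the sheaf $R\psi_* R^1\widetilde{\varphi}_*(\widetilde{\psi}^*\pi^*(E_y\otimes E_z^*\otimes E_w^*))$ by working fiberwise over the singular stratum $\cK\setminus\cK_0$ and checking that the relevant pushforward vanishes. By the previous Lemma, $\widetilde{\varphi}_*\widetilde{\psi}^*\pi^*(E_y\otimes E_z^*\otimes E_w^*)=0$, so the only contribution to $R\widetilde{\varphi}_*$ comes from the $R^1$ term, which is supported on $\psi^{-1}(\cK\setminus\cK_0)$. Thus the problem localizes entirely to the fibers of $\psi$ over points of $\cK\setminus\cK_0$, and I may compute $R\psi_*$ fiber by fiber using cohomology and base change.

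\textbf{Key steps.} First I would fix a point $j\oplus j^{-1}\in\cK\setminus\cK_0$ and recall from the preceding Proposition that the fiber $\psi^{-1}(j\oplus j^{-1})\cong\PP^{g-2}\times\PP^{g-2}$, and that on this fiber the sheaf $R^1\widetilde{\varphi}_*(\widetilde{\psi}^*\pi^*(E_y\otimes E_z^*\otimes E_w^*))$ is isomorphic to $\cO(0,-1)\oplus\cO(-1,0)$. Second, I would compute the cohomology of this line-bundle sum on $\PP^{g-2}\times\PP^{g-2}$ by the K\"unneth formula: for each summand one of the two factors carries $\cO_{\PP^{g-2}}(-1)$, whose cohomology $\H^\bullet(\PP^{g-2},\cO(-1))$ vanishes in all degrees (since $g\geq 16$ forces $g-2\geq 2$, so there are no exceptional low-dimensional cases). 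By K\"unneth, the total cohomology of each summand vanishes, hence $\H^\bullet(\psi^{-1}(j\oplus j^{-1}),\cO(0,-1)\oplus\cO(-1,0))=0$. Third, since the restriction to every fiber of $\psi$ has vanishing cohomology in all degrees, cohomology and base change (applied repeatedly, using that the fiberwise cohomology is uniformly zero) gives $R^q\psi_*\bigl(R^1\widetilde{\varphi}_*(\widetilde{\psi}^*\pi^*(E_y\otimes E_z^*\otimes E_w^*))\bigr)=0$ for all $q$, which is precisely the claimed vanishing of the total derived pushforward.

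\textbf{The main obstacle} is ensuring that the fiberwise identification of the sheaf as $\cO(0,-1)\oplus\cO(-1,0)$ is compatible with the base-change formalism along the non-reduced or singular structure of $\cK\setminus\cK_0$, so that one may legitimately pass from fiberwise cohomology vanishing to the vanishing of the derived pushforward. Concretely, I must confirm that the sheaf $R^1\widetilde{\varphi}_*(\widetilde{\psi}^*\pi^*(E_y\otimes E_z^*\otimes E_w^*))$, viewed as a sheaf on $Z$ and then pushed to $M_0\setminus\cK_0$ via $\psi$, restricts correctly to the $\PP^{g-2}\times\PP^{g-2}$ fibers described in the Proposition and that there are no higher obstruction terms hidden in the Leray spectral sequence for the composite $\psi\circ\widetilde{\varphi}$. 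Once the fiberwise picture is pinned down, the vanishing is immediate from the cohomology of line bundles on products of projective spaces.
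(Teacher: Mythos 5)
Your proposal is correct and takes essentially the same approach as the paper: the paper's proof likewise deduces the vanishing directly from the preceding Proposition's fiberwise identification of $R^1\widetilde{\varphi}_*(\widetilde{\psi}^*\pi^*(E_y\otimes E_z^*\otimes E_w^*))$ as $\cO(0,-1)\oplus\cO(-1,0)$ on each fiber $\PP^{g-2}\times\PP^{g-2}$ of $\psi$ over $\cK\setminus\cK_0$, whose cohomology vanishes in every degree. Your write-up merely makes explicit the K\"unneth computation and the cohomology-and-base-change step that the paper leaves implicit.
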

\begin{proof}
From the above description of $R^1\widetilde{\varphi}_*(\pi^*(E_y \otimes E_z^* \otimes E_w^*))$ we see that its derived pushforward to $\cK \setminus \cK_0$ is $0.$ Hence we obtain the desired result.
\end{proof}

\begin{prop}\label{distinct}
If one of the four points $x,y,z,w \in X$ is different from all the others, then we have
$$ \H^i(M, E_x \otimes E_y \otimes E^*_z \otimes E^*_w)=0 $$
for every $i.$
\end{prop}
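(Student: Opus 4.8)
The plan is to reduce the cohomology on $M$ to a pushforward along the Hecke morphism $\varphi$, where the preceding Lemma and Corollary already carry out the delicate computation, and then to fill in the two extreme ranges of $i$ by a codimension estimate and Sommese vanishing. First I would normalize: by the determinant Lemma I may assume $L\cong\cO(x)$, and since $E_p^*\cong E_p\otimes\theta^{-1}$ the bundle $E_x\otimes E_y\otimes E_z^*\otimes E_w^*$ is symmetric in the four points up to a twist by $\theta^{-2}$, so I may put the point distinguished from the other three into the first (undualized) slot, i.e. $x\neq y,z,w$. This is precisely the hypothesis under which the Lemma, Corollary and the structural diagram were set up. Writing $\cG:=E_y\otimes E_z^*\otimes E_w^*$ and using $E_x\cong\pi_*\cO_{\PP E_x}(1)$ with $R^{>0}\pi_*\cO(1)=0$ (equivalently, tensoring $0\to\pi^*\theta\otimes\cO(-1)\to\pi^*E_x\to\cO(1)\to 0$ by $\pi^*\cG$ and using $R\pi_*\cO(-1)=0$), I obtain
\[
\H^i(M,E_x\otimes E_y\otimes E_z^*\otimes E_w^*)\cong \H^i\!\big(\PP E_x,\ \cV\big),\qquad \cV:=\cO(1)\otimes\pi^*\cG ,
\]
a rank $8$ bundle with $R\pi_*\cV=E_x\otimes\cG$, so that this identification genuinely records all four points.

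Next I would push $\cV$ forward along $\varphi$. The key observation is that $\cO_{\PP E_x}(1)$ restricts trivially to each Hecke curve $C=\varphi^{-1}(m_0)$: since $\omega_\varphi\cong\pi^*\theta^{-1}\otimes\cO(2)$ restricts to $\omega_C\cong\cO_{\PP^1}(-2)$ and $\theta\cdot C=2$, one gets $\cO(1)|_C\cong\cO_C$, whence $\cV|_C\cong\pi^*\cG|_C\cong\cO(-1)^{\oplus 8}$. Thus $\cV$ and $\pi^*\cG$ agree on $\varphi$-fibres (even though they differ on $\pi$-fibres), so the Lemma ($\widetilde\varphi_*\widetilde\psi^*\pi^*\cG=0$) and the Corollary ($R\psi_*R^1\widetilde\varphi_*\widetilde\psi^*\pi^*\cG=0$) apply verbatim to $\cV$. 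Transporting these through the conic-bundle resolution, using $R\widetilde\psi_*\cO_\cC=\cO$ for the blow-up $\widetilde\psi$, the commuting square $\psi\circ\widetilde\varphi=\varphi\circ\widetilde\psi$ and the projection formula, yields $R\varphi_*\cV=0$ over all of $M_0\setminus\cK_0$. Since $\varphi$ is proper, setting $U:=\PP E_x\setminus\varphi^{-1}(\cK_0)=\varphi^{-1}(M_0\setminus\cK_0)$ gives $\H^i(U,\cV|_U)=\H^i(M_0\setminus\cK_0,R\varphi_*\cV)=0$ for every $i$.

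It then remains to extend the vanishing across $\varphi^{-1}(\cK_0)$ and to close the high-degree range. As $\cV$ is locally free on the smooth variety $\PP E_x$ and $\varphi^{-1}(\cK_0)$ has codimension $3g-2-(g-1)=2g-1$, the local-cohomology sheaves $\cH^j_{\varphi^{-1}(\cK_0)}(\cV)$ vanish for $j<2g-1$; hence the restriction $\H^i(\PP E_x,\cV)\to\H^i(U,\cV|_U)$ is an isomorphism for $i\le 2g-3$ and injective for $i=2g-2$, giving $\H^i(M,E_x\otimes E_y\otimes E_z^*\otimes E_w^*)=\H^i(\PP E_x,\cV)=0$ for all $i\le 2g-2$. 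On the other hand, Proposition \ref{highercohomology} supplies the vanishing for $i\ge g+15$. Since $g\ge 16$ forces $2g-2\ge g+14$, the two ranges overlap and cover every $i\in\ZZ$, which proves the Proposition.

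The genuinely hard part is the vanishing $R\varphi_*\cV=0$ over the strictly semistable locus $\cK\setminus\cK_0$, where $\varphi$ is no longer a $\PP^1$-fibration; this is exactly what forces the desingularization $\psi\colon Z\to M_0\setminus\cK_0$, the conic bundle $\widetilde\varphi\colon\cC\to Z$, and the explicit determination that $R^1\widetilde\varphi_*\widetilde\psi^*\pi^*\cG$ restricts to $\cO(0,-1)\oplus\cO(-1,0)$ on each $\PP^{g-2}\times\PP^{g-2}$, so that its $\psi$-pushforward dies. That work is already done in the Lemma and Corollary and may be invoked directly here; the only remaining care is the bookkeeping of the two numerical ranges, which is what pins the hypothesis down to $g\ge 16$.
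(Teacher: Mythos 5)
Your proposal follows the paper's own proof almost step for step: the reduction $\H^i(M,E_x\otimes E_y\otimes E_z^*\otimes E_w^*)\cong \H^i(\PP E_x,\cO(1)\otimes\pi^*(E_y\otimes E_z^*\otimes E_w^*))$, the passage to the conic-bundle resolution $\widetilde\varphi\colon\cC\to Z$ with $\widetilde\psi$ the blow-up, the invocation of the Lemma ($\widetilde\varphi_*\widetilde\psi^*\pi^*(E_y\otimes E_z^*\otimes E_w^*)=0$) and the Corollary ($R\psi_*R^1\widetilde\varphi_*\widetilde\psi^*\pi^*(E_y\otimes E_z^*\otimes E_w^*)=0$), the restriction isomorphism in degrees $i\le 2g-3$ and injectivity at $i=2g-2$ coming from the codimension $2g-1$ of $\varphi^{-1}(\cK_0)$, and the overlap with Proposition \ref{highercohomology} exactly when $g\ge 16$. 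The single point where you deviate is the treatment of the twist by $\cO(1)$, and that is where there is a genuine gap. The paper disposes of the twist by the global identification $\cO_{\PP E_x}(1)\cong\varphi^*\theta_0$ (classical from the Hecke construction, cf.\ \cite{NR;78}), so that $\widetilde\psi^*(\cO(1)\otimes\pi^*\cG)\cong\widetilde\varphi^*\psi^*\theta_0\otimes\widetilde\psi^*\pi^*\cG$ and the projection formula carries the twist harmlessly through both $R\widetilde\varphi_*$ and $R\psi_*$. You instead verify only that $\cO(1)$ is trivial on the fibres of $\varphi$ over the stable locus $M_0^s$ and then assert that the Lemma and the Corollary ``apply verbatim'' to $\cV=\cO(1)\otimes\pi^*\cG$.

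For the Lemma this is indeed enough, since $\widetilde\varphi_*\widetilde\psi^*\cV$ is torsion free and vanishes on the dense locus $\psi^{-1}(M_0^s)$; that is how the paper itself argues. But for the Corollary it is not: the sheaf $R^1\widetilde\varphi_*\widetilde\psi^*\cV$ is supported precisely on $D=\psi^{-1}(\cK\setminus\cK_0)$, that is, on the locus about which your fibrewise computation says nothing. There the fibres of $\widetilde\varphi$ are the degenerate conics $l_1\cup l_2$, and the paper's computation that the $R^1$ restricts to $\cO(0,-1)\oplus\cO(-1,0)$ on each $\PP^{g-2}\times\PP^{g-2}$ (hence dies under $R\psi_*$) depends on the precise restriction of the kernel bundle to those degenerate fibres. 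Triviality on nearby smooth fibres does not control this: for example, if $\Sigma$ is one of the two components of $\widetilde\varphi^{-1}(D)$, the line bundle $\cO(\Sigma)$ is trivial on every smooth fibre yet restricts on a degenerate conic to a bundle of degrees $(-1,+1)$ on the two lines; and a twist of $R^1\widetilde\varphi_*\widetilde\psi^*\pi^*\cG$ that restricted to $\cO(1,0)$ on the $\psi$-fibres would turn $\cO(0,-1)\oplus\cO(-1,0)$ into $\cO(1,-1)\oplus\cO(0,0)$, whose pushforward to $\cK\setminus\cK_0$ is no longer zero, destroying the vanishing you need. So ``apply verbatim'' secretly requires that $\widetilde\psi^*\cO(1)$ be the $\widetilde\varphi$-pullback of a line bundle coming from $M_0\setminus\cK_0$, which is exactly the paper's input $\cO(1)\cong\varphi^*\theta_0$; your fibrewise degree computation over $M_0^s$ (which in any case quietly assumes $\theta\cdot C=2$ for Hecke curves) is strictly weaker. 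Once this identification is inserted, your argument coincides with the paper's proof; without it, the crucial step over $D$ is unjustified.
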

\begin{proof}
Let $x$ be a point in $X$ which is different from $y,z,w \in X.$ We have the following isomorphisms for $i \leq 2g-3.$
$$ \H^i(M, E_x \otimes E_y \otimes E^*_z \otimes E^*_w) \cong \H^i(\PP E_x, \cO(1) \otimes \pi^*(E_y \otimes E^*_z \otimes E^*_w)) \cong \H^i(\PP E_x, \varphi^* \theta_0 \otimes \pi^*(E_y \otimes E^*_z \otimes E^*_w)) $$
$$ \cong \H^i(\PP E_x \setminus \varphi^{-1} \cK_0, \varphi^* \theta_0 \otimes \pi^*(E_y \otimes E^*_z \otimes E^*_w)) \cong \H^i(\cC, \widetilde{\psi}^* \varphi^* \theta_0 \otimes \widetilde{\psi}^* \pi^*(E_y \otimes E^*_z \otimes E^*_w)) $$
$$ \cong \H^i(\cC, \widetilde{\varphi}^* \psi^* \theta_0 \otimes \widetilde{\psi}^* \pi^*(E_y \otimes E^*_z \otimes E^*_w)) \cong \H^{i-1}(Z, \psi^* \theta_0 \otimes R^1 \widetilde{\varphi}_* \widetilde{\psi}^* \pi^*(E_y \otimes E^*_z \otimes E^*_w)) $$

The last isomorphism comes from the computation $\widetilde{\varphi}_* \widetilde{\psi}^* \pi^*(E_y \otimes E^*_z \otimes E^*_w)=0.$ From the above Corollary and Leray spectral sequence, we obtain the desired vanishing for $i \leq 2g-3.$ From Proposition \ref{highercohomology} and from the inclusion $\H^{2g-2}(M,E_x \otimes E_y \otimes E_z^* \otimes E_w^*) \to \H^{2g-2}(\PP E_x \setminus \varphi^{-1}(\cK_0), \cO(1) \otimes \pi^*(E_y \otimes E^*_z \otimes E^*_w)),$ we have desired result.
\end{proof}

\bigskip

\subsection{Computation of $H^{i}(M,E_x \otimes E_x \otimes E_z^* \otimes E_z^*)$ and its consequences}

In order to compute cohomology groups $H^{i}(M,E_x \otimes E_x \otimes E_z^* \otimes E_z^*),$ we need to study coherent sheaf $\widetilde{\varphi}_* \widetilde{\psi}^* \pi^* \ad E_z$ and its dual.

\begin{lemm}
The coherent sheaf $\widetilde{\varphi}_* \widetilde{\psi}^* \pi^* \ad E_z$ is a rank 3 vector bundle.
\end{lemm}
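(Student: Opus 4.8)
The plan is to compare $\widetilde{\varphi}_* \widetilde{\psi}^* \pi^* \ad E_z$ over the two natural strata of $Z$: the open locus $Z \setminus D$ lying over $M_0^s$, and the boundary $D = \psi^{-1}(\cK \setminus \cK_0)$. Over the stable locus the Proposition on Hecke transforms (which applies since $z \neq x$) gives $\pi^* \ad E_z \cong \varphi^* \ad G_z$, so by the commutativity $\varphi \circ \widetilde{\psi} = \psi \circ \widetilde{\varphi}$ of the defining diagram we obtain $\widetilde{\psi}^* \pi^* \ad E_z \cong \widetilde{\varphi}^* \psi^* \ad G_z$. Since $\widetilde{\varphi} : \cC \to Z$ is a conic bundle, every fiber (a smooth conic, or a pair of lines) satisfies $\H^0(\cO) = \CC$ and $\H^1(\cO) = 0$, whence $\widetilde{\varphi}_* \cO_\cC = \cO_Z$ and $R^1 \widetilde{\varphi}_* \cO_\cC = 0$. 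The projection formula then yields $\widetilde{\varphi}_* \widetilde{\psi}^* \pi^* \ad E_z \cong \psi^* \ad G_z$ over $Z \setminus D$, which is locally free of rank $3$.

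To upgrade this to local freeness across all of $Z$, I would invoke cohomology and base change (Grauert): it suffices to show that the restriction $S_t$ of $\widetilde{\psi}^* \pi^* \ad E_z$ to each fiber $\widetilde{\varphi}^{-1}(t)$ has $h^0(\widetilde{\varphi}^{-1}(t), S_t) = 3$ and $\H^1(\widetilde{\varphi}^{-1}(t), S_t) = 0$ for every $t \in Z$. Over $Z \setminus D$ the fiber is $\PP^1$ and $S_t$ is the pullback of a rank-$3$ bundle from $Z$, hence trivial on the fiber, so $h^0 = 3$ and $h^1 = 0$ automatically.

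The real content is the computation over $D$, where $\widetilde{\varphi}^{-1}(t) = l_1 \cup l_2$ is a degenerate conic, with $l_1$ a line in a fiber of $\PP(\cE^*_+)$ and $l_2$ a line in a fiber of $\PP(\cE^*_-)$ meeting at one node. Restricting the two extension sequences for $(1 \times \pi)^*E$ over $X \times \PP(\cE^*_+)$ and $X \times \PP(\cE^*_-)$ to $\{z\} \times l_i$ and using that $\cO(x)_z$ is trivial (as $z \neq x$), the extensions split on $\PP^1$ and give $E_z|_{l_1} \cong (j_z \otimes \cO_{l_1}(1)) \oplus j_z^{-1}$ and $E_z|_{l_2} \cong (j_z^{-1} \otimes \cO_{l_2}(1)) \oplus j_z$. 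Decomposing $\ad E_z$ by the $j$-weights then yields
$$ \ad E_z|_{l_1} \cong \cO_{l_1} \oplus (j_z^{2} \otimes \cO_{l_1}(1)) \oplus (j_z^{-2} \otimes \cO_{l_1}(-1)), $$
$$ \ad E_z|_{l_2} \cong \cO_{l_2} \oplus (j_z^{-2} \otimes \cO_{l_2}(1)) \oplus (j_z^{2} \otimes \cO_{l_2}(-1)), $$
so on each line $\H^0 = \CC^3$ and $\H^1 = 0$.

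Finally I would run the Mayer--Vietoris sequence for $l_1 \cup l_2$ exactly as in the Mayer--Vietoris computation of the previous subsection. At the node the rank-$3$ fiber splits into the three distinct weight spaces $0, +2, -2$; the sections on $l_1$ reach only the weight-$0$ and weight-$(+2)$ directions (the $\cO_{l_1}(-1)$ summand carrying weight $-2$ having no sections), while those on $l_2$ reach the weight-$0$ and weight-$(-2)$ directions. Hence the restriction map $\H^0(l_1, S|_{l_1}) \oplus \H^0(l_2, S|_{l_2}) \to \H^0(l_1 \cap l_2, S|_{l_1 \cap l_2}) \cong \CC^3$ is surjective, and together with $\H^1(l_i, S|_{l_i}) = 0$ this forces $\H^1(l_1 \cup l_2, S) = 0$ and $h^0(l_1 \cup l_2, S) = 6 - 3 = 3$. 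Thus $h^0 = 3$ and $h^1 = 0$ on every fiber of $\widetilde{\varphi}$, and Grauert's theorem gives that $\widetilde{\varphi}_* \widetilde{\psi}^* \pi^* \ad E_z$ is a rank $3$ vector bundle. The main obstacle is precisely this boundary computation: identifying the two extension structures on the components of the degenerate conic and checking, via the weight bookkeeping, that their section spaces span the full fiber at the node.
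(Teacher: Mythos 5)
Your proof is correct and follows essentially the same route as the paper's: triviality of $\ad E_z$ on the smooth conic fibers over $M_0^s$ (via the Hecke-transform isomorphism $\pi^*\ad E_z \cong \varphi^*\ad G_z$), a Mayer--Vietoris computation on the degenerate conics $l_1 \cup l_2$ giving $h^0 = 3$ and $h^1 = 0$, and local freeness of the pushforward from constancy of fiber cohomology. You make explicit what the paper leaves implicit --- the Grauert/base-change step, and the splitting types $\cO \oplus (j_z^{2}\otimes\cO_{l_i}(1)) \oplus (j_z^{-2}\otimes\cO_{l_i}(-1))$ on the two lines with their weight bookkeeping at the node (which is the reading consistent with the paper's earlier proposition on $R^1\widetilde{\varphi}_*$) --- but the structure of the argument is the same.
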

\begin{proof}
When $m \in M^s_0,$ the inverse image of $\widetilde{\varphi}$ is a smooth conic and the restriction of $\widetilde{\psi}^* \pi^* \ad E_z$ over the conic is isomorphic to $\cO^{\oplus 3}.$ Therefore $\widetilde{\varphi}_* \widetilde{\psi}^* \pi^* \ad E_z$ is a rank 3 vector bundle on $\psi^{-1}(M^s_0).$ \\

Now let us analyze $\widetilde{\varphi}_* \widetilde{\psi}^* \pi^* \ad E_z$ over $\psi^{-1}(\cK \setminus \cK_0).$ Let us consider the following Mayer-Vietoris sequence where $l_1 \cup l_2$ is the fiber of $\widetilde{\psi}$ over a point in $\PP^{g-2} \times \PP^{g-2}$ over $j \oplus j^{-1}$ and $S$ is the restriction of $\widetilde{\psi}^* \pi^* \ad E_z$ to $l_1 \cup l_2.$
$$ 0 \to \H^0(l_1 \cup l_2, S) \to H^0(l_1,
S|_{l_1})\oplus H^0(l_2, S|_{l_2}) \to H^0(l_1\cap l_2,
S|_{l_1\cap l_2}) $$
$$ \to \H^1(l_1\cup l_2, S) \to H^1(l_1,
S|_{l_1})\oplus H^1(l_2, S|_{l_2}) \to H^1(l_1\cap l_2,
S|_{l_1\cap l_2}) \to 0 $$

Let us analyze the morphism $H^0(l_1,S|_{l_1}) \oplus H^0(l_2, S|_{l_2}) \to H^0(l_1\cap l_2, S|_{l_1\cap l_2}).$ It is isomorphic to $\CC^{\oplus 2} \oplus j^{\otimes 2}_z \oplus j^{\otimes -2}_z \oplus j^{\otimes 2}_z \oplus j^{\otimes -2}_z \to \CC \oplus j^{\otimes 2}_z \oplus j^{\otimes -2}_z$ and surjective. Therefore we have $\H^0(l_1 \cup l_2, S) \cong \CC \oplus j^{\otimes 2}_z \oplus j^{\otimes -2}_z$ and $\H^1(l_1 \cup l_2, S) \cong 0.$ \\

Therefore we see that $\widetilde{\varphi}_* \widetilde{\psi}^* \pi^* \ad E_z$ is a rank 3 vector bundle.
\end{proof}

\begin{prop}
Let $\pi_l : Bl_{[l]} \PP V \to \PP (V/l)$ be the morphism as before. Let $H$ be the pullback of the hyperplane of $\PP V$ of the blow-up. Then ${R \pi_l}_* \cO(H)$ is isomorphic to $\cO \oplus \cO(1).$
\end{prop}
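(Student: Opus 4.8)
The plan is to express $\cO(H)$ in terms of the exceptional divisor and thereby reduce to a relative pushforward that is essentially the computation already done in the previous lemma. Since $\pi_l$ is the rational map defined by the linear system of hyperplanes of $\PP V$ through the point $[l]$, made into a morphism after blowing up, one has $\pi_l^*\cO_{\PP(V/l)}(1) \cong \cO(H - E_l)$, equivalently
$$ \cO(H) \cong \pi_l^*\cO_{\PP(V/l)}(1) \otimes \cO(E_l). $$
By the projection formula this gives
$$ R{\pi_l}_*\cO(H) \cong \cO_{\PP(V/l)}(1) \otimes R{\pi_l}_*\cO(E_l), $$
so it suffices to compute $R{\pi_l}_*\cO(E_l)$.

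For the latter I would reuse the short exact sequence from the previous lemma,
$$ 0 \to \cO \to \cO(E_l) \to \cO_{E_l}(E_l) \to 0. $$
Since $\pi_l$ is a $\PP^1$-fibration, ${\pi_l}_*\cO \cong \cO_{\PP(V/l)}$ and $R^1{\pi_l}_*\cO = 0$. On the exceptional divisor the normal bundle is $\cO_{E_l}(E_l) \cong \cO_{\PP^{n-1}}(-1)$, and by the earlier Lemma $\pi_l$ restricts to an isomorphism $E_l \xrightarrow{\sim} \PP(V/l)$; hence $R{\pi_l}_*\cO_{E_l}(E_l) \cong \cO_{\PP(V/l)}(-1)$, concentrated in degree $0$. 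Pushing forward the displayed sequence then yields $R^1{\pi_l}_*\cO(E_l) = 0$ together with a short exact sequence
$$ 0 \to \cO_{\PP(V/l)} \to {\pi_l}_*\cO(E_l) \to \cO_{\PP(V/l)}(-1) \to 0. $$

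This extension is classified by $\Ext^1(\cO_{\PP(V/l)}(-1), \cO_{\PP(V/l)}) \cong \H^1(\PP^{n-1}, \cO(1)) = 0$, so it splits and ${\pi_l}_*\cO(E_l) \cong \cO_{\PP(V/l)} \oplus \cO_{\PP(V/l)}(-1)$. Tensoring back with $\cO_{\PP(V/l)}(1)$ gives
$$ R{\pi_l}_*\cO(H) \cong \cO_{\PP(V/l)} \oplus \cO_{\PP(V/l)}(1), $$
which is the assertion. The step deserving the most care is the identification $\cO(H) \cong \pi_l^*\cO_{\PP(V/l)}(1) \otimes \cO(E_l)$ together with the transport of the normal bundle $\cO_{E_l}(E_l)$ across the isomorphism $\pi_l|_{E_l}$; once these are fixed, the rest is the same bookkeeping with the Euler/exceptional sequence as in the preceding two lemmas. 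Alternatively, using the identification $\Bl_{[l]}\PP V \cong \PP(\cO \oplus \cO(1))$ from the earlier Lemma, one checks that $\cO(H)$ is exactly the relative hyperplane bundle $\cO_{\PP(\cO\oplus\cO(1))}(1)$, whence $R{\pi_l}_*\cO(H) \cong {\pi_l}_*\cO_{\PP(\cO\oplus\cO(1))}(1) \cong \cO \oplus \cO(1)$ directly.
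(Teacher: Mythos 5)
Your main argument is correct and essentially the same as the paper's: your exact sequence $0 \to \cO \to \cO(E_l) \to \cO_{E_l}(E_l) \to 0$ is precisely the paper's sequence $0 \to \cO(H-E_l) \to \cO(H) \to \cO_{E_l} \to 0$ untwisted by $\cO(H-E_l) \cong \pi_l^*\cO_{\PP(V/l)}(1)$, and both proofs finish by splitting the pushed-forward extension using the vanishing of $\Ext^1$ between the relevant line bundles on $\PP(V/l)$. The only genuine variation is your closing remark identifying $\cO(H)$ with the relative hyperplane bundle of $\PP(\cO \oplus \cO(1))$, which, once the projectivization convention in the earlier Lemma is pinned down, gives the result in one line and is more direct than either pushforward computation.
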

\begin{proof}
From the short exact sequence
$$ 0 \to \cO(-E_l) \to \cO \to \cO_{E_l} \to 0 $$
we have the following exact sequence.
$$ 0 \to \cO(H-{E_l}) \to \cO(H) \to \cO_{E_l} \to 0 $$

Because $\H^0(\cO(H-{E_l})) \cong \CC^{n}$ and ${\pi_l}_* \cO(H-{E_l})$ is a line bundle on $\PP (V/l)$ we see that ${\pi_l}_* \cO(H-{E_l}) \cong \cO(1).$ By pushing forward the second exact sequence via $\pi_l$ we have the following sequence
$$ 0 \to \cO(1) \to \pi_*\cO(H) \to \cO \to 0 $$
and ${R^i \pi_l}_* \cO(H) =0$ for $i \geq 1.$ Because $\Ext^1(\cO, \cO(1))=0,$ we have ${R \pi_l}_* \cO(H) \cong \cO \oplus \cO(1).$
\end{proof}

\begin{lemm}
We have following two injective morphism 
$$ \widetilde{\varphi}^* ( \widetilde{\varphi}_* \widetilde{\psi}^* \pi^* \ad E_z ) \to \widetilde{\psi}^* \pi^* \ad E_z $$
and
$$ \widetilde{\psi}^* \pi^* \ad E_z  \to  \widetilde{\varphi}^* ( \widetilde{\varphi}_* \widetilde{\psi}^* \pi^* \ad E_z )^*. $$
\end{lemm}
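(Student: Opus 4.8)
The plan is to realize the first arrow as the adjunction counit and the second as its dual, exploiting the self-duality of $\ad E_z$. Write $\cV := \widetilde{\psi}^* \pi^* \ad E_z$, which is a rank $3$ vector bundle on $\cC$. The trace pairing $(a,b)\mapsto \tr(ab)$ on the traceless endomorphism bundle $\ad E_z$ is a nondegenerate symmetric form, so there is a canonical isomorphism $\ad E_z \cong (\ad E_z)\dual$; pulling back yields a canonical self-duality $\cV \cong \cV\dual$. By the preceding Lemma, $\widetilde{\varphi}_* \cV$ is locally free of rank $3$, hence so is $\widetilde{\varphi}^*\widetilde{\varphi}_*\cV$, and of course $\cV$ itself is locally free of rank $3$.

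First I would take the first morphism to be the counit $\alpha : \widetilde{\varphi}^* \widetilde{\varphi}_* \cV \to \cV$ of the adjunction $(\widetilde{\varphi}^*, \widetilde{\varphi}_*)$. For $z \in Z$ mapping to a point $m_0 \in M^s_0$, the fibre $\widetilde{\varphi}^{-1}(z)$ is a smooth conic $\cong \Po$, and $\cV$ restricts to $\cO^{\oplus 3}$ on it — this is exactly the computation in the proof of the preceding Lemma. Since $\H^1(\Po,\cO^{\oplus 3})=0$, cohomology and base change applies over the preimage of $M^s_0$, so there $\alpha$ restricted to such a fibre is the evaluation map $\H^0(\Po,\cO^{\oplus 3})\otimes \cO \to \cO^{\oplus 3}$, an isomorphism. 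Thus $\alpha$ is an isomorphism over the dense open subset of the integral variety $\cC$ lying over $M^s_0$. Its kernel is then a subsheaf of the locally free (hence torsion-free) sheaf $\widetilde{\varphi}^*\widetilde{\varphi}_*\cV$ that vanishes on a dense open set; being a torsion subsheaf of a torsion-free sheaf it is zero, so $\alpha$ is injective.

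For the second morphism I would dualize. Applying $(-)\dual$ to $\alpha$ and using that pullback commutes with duals for vector bundles gives $\alpha\dual : \cV\dual \to \widetilde{\varphi}^*(\widetilde{\varphi}_*\cV)\dual$; precomposing with the self-duality $\cV \cong \cV\dual$ produces the required map $\cV \to \widetilde{\varphi}^*(\widetilde{\varphi}_*\cV)\dual$. Since $\alpha$ is an isomorphism over the dense open locus over $M^s_0$, so is $\alpha\dual$ there, and the identical torsion argument — now with torsion-free source $\cV$ — shows this second map is injective as well.

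The one point that needs care, and the only genuine obstacle, is the identification of $\alpha$ over the stable locus with the fibrewise evaluation isomorphism: this rests on the vanishing of $\H^1$ of the trivial restriction of $\cV$ to a smooth conic, which guarantees both that base change holds and that $\widetilde{\varphi}_*\cV$ is locally free of the expected rank there. Everything else is formal: the self-duality of $\ad E_z$ via the trace form, and the elementary fact that a morphism with locally free source that is an isomorphism on a dense open subset of an integral scheme is injective. In particular, no information about the degenerate fibres over $\psi^{-1}(\cK\setminus\cK_0)$ is needed for injectivity; that data would only enter a subsequent analysis of the cokernels of these two maps.
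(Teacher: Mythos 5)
Your proposal is correct and follows essentially the same route as the paper: the first map is the adjunction counit, injectivity follows from generic injectivity over the stable locus together with torsion-freeness (the paper says ``purity'') of the locally free source, and the second map is obtained by dualizing and invoking the self-duality of $\ad E_z$. The only difference is that you spell out details the paper leaves implicit --- the trace-form self-duality, and the cohomology-and-base-change argument showing the counit is the fibrewise evaluation isomorphism over $\psi^{-1}(M_0^s)$ --- which is added rigor, not a different method.
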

\begin{proof}
From adjunction formula we have the following morphism.
$$ \widetilde{\varphi}^* ( \widetilde{\varphi}_* \widetilde{\psi}^* \pi^* \ad E_z ) \to \widetilde{\psi}^* \pi^* \ad E_z $$
It is injective since $\widetilde{\varphi}^* ( \widetilde{\varphi}_* \widetilde{\psi}^* \pi^* \ad E_z )$ is a pure sheaf and it is injective over $M^s_0.$ \\

By taking dual to the above sequence we have
$$ \widetilde{\psi}^* \pi^* \ad E_z  \to  \widetilde{\varphi}^* ( \widetilde{\varphi}_* \widetilde{\psi}^* \pi^* \ad E_z )^* $$
and see that it is also injective due to the same reason.
\end{proof}

By pushing forward the map $$ \widetilde{\psi}^* \pi^* \ad E_z  \to  \widetilde{\varphi}^* ( \widetilde{\varphi}_* \widetilde{\psi}^* \pi^* \ad E_z )^* $$ via $\widetilde{\varphi}$ we have an injective map
$$ \widetilde{\varphi}_* \widetilde{\psi}^* \pi^* \ad E_z \to ( \widetilde{\varphi}_* \widetilde{\psi}^* \pi^* \ad E_z )^*. $$ 

In order to describe its cokernel let us consider the following situation. Note that the cokernel is supported on $D$ where $D$ be the inverse image $\psi^{-1}(\cK \setminus \cK_0)$ of $Z.$ \\

From the above discussions we have the following Lemma.

\begin{lemm}
We have the following sequence.
$$ 0 \to \cP_z^{\otimes -2} \otimes \cO(1,0) \oplus \cP_z^{\otimes 2} \otimes \cO(0,1) \to \widetilde{\varphi}_* \widetilde{\psi}^* \pi^* \ad E_z|_D \to \cO \to 0 $$
\end{lemm}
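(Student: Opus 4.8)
The plan is to compute $\widetilde{\varphi}_* \widetilde{\psi}^* \pi^* \ad E_z|_D$ by working separately on the two rulings of the degenerate conic bundle lying over $D$ and then gluing by Mayer--Vietoris. Recall that over $J' = J \setminus \cK_0$ the fibre of $\widetilde{\varphi}$ degenerates into the two $\PP^1$-fibrations $\Bl_{\PP(\cP_x^{-2})}\PP(\cE^*_+) \to \PP(\cF^*_+)$ and $\Bl_{\PP(\cP_x^{2})}\PP(\cE^*_-) \to \PP(\cF^*_-)$, glued along their exceptional divisors, and that the lift of $D$ is $\PP(\cF^*_+) \times_{J'} \PP(\cF^*_-)$, with $\cO(1,0)$ and $\cO(0,1)$ the pullbacks of $\cO(1)$ from the two factors. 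So I would compute each ruling-pushforward, identify its restriction to the gluing section $\Sigma$ (the common exceptional divisor), and take the fibre product of the answers.

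First I would restrict the adjoint bundle to each ruling. On $X \times \PP(\cE^*_+)$ the sequence $0 \to \cP \otimes \cO(1) \to (1\times\pi)^*E \to \cP^{-1}\otimes \pi_X^*\cO(x) \to 0$ restricts at $z$ (with $z\neq x$, so $\cO(x)|_z=\cO$) to $0 \to \cP_z\cO(1) \to E_z \to \cP_z^{-1} \to 0$; passing to trace-free endomorphisms gives a three-step filtration of $\ad E_z$ with graded pieces $\cP_z^{2}\cO(1)$, $\cO$, $\cP_z^{-2}\cO(-1)$ (and the symmetric answer on the $-$ side, with the powers of $\cP_z$ interchanged). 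Pulling back to the blow-up replaces $\cO(1)$ by $\cO(H)$. Pushing each graded piece down the ruling, I would use $R{\pi_l}_*\cO(H)\cong\cO\oplus\cO(1)$ together with the elementary $R{\pi_l}_*\cO(-H)=0$ and $R{\pi_l}_*\cO=\cO$: the top piece dies, the middle gives $\cO$, and the bottom piece gives $\cP_z^{2}\oplus\cP_z^{2}\cO(1)$. Since every $R^1$ vanishes the filtration is preserved, so the ruling-pushforward is an extension of $\cO$ by $\cP_z^{2}\oplus\cP_z^{2}\cO(1)$, and after pullback to $D$ the distinguished summand acquires the twist $\cO(1,0)$; the $-$ side likewise produces a summand twisted by $\cO(0,1)$.

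Next I would glue along $\Sigma$. The crucial input, read off from the proof of $R{\pi_l}_*\cO(H)\cong\cO\oplus\cO(1)$ via $0\to\cO(1)\to{\pi_l}_*\cO(H)\to\cO\to0$, is that on restriction to $E_l$ the $\cO(1)$-summand maps to $0$ (it consists of sections vanishing on $E_l$) while the $\cO$-summand restricts isomorphically; since $H|_{E_l}=0$ one finds $\ad E_z|_\Sigma \cong \cO \oplus \cP_z^{2}\oplus\cP_z^{-2}$. Feeding this into the Mayer--Vietoris sequence $0 \to \widetilde{\varphi}_*(\,\cdot\,) \to (\text{$+$ push})\oplus(\text{$-$ push}) \to (\,\cdot\,)|_\Sigma \to R^1\widetilde{\varphi}_*(\,\cdot\,)\to\cdots$, the restriction map is the difference of evaluations and is surjective (recovering $R^1=0$ over $D$, consistent with the earlier $\H^1(l_1\cup l_2)=0$ count). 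Its kernel is the diagonal $\cO$ together with exactly the two $\cO(1)$-twisted summands, which vanish on $\Sigma$; reading off the induced filtration gives the asserted extension with sub-bundle $\cP_z^{-2}\cO(1,0)\oplus\cP_z^{2}\cO(0,1)$ and quotient $\cO$.

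I expect the delicate step to be the bookkeeping of the last paragraph: one must correctly match the two exceptional sections, check that the gluing map is the naive difference of evaluations so that precisely the $\cO(1)$-twisted parts survive in the kernel while the untwisted parts of the bottom pieces are absorbed into $\ad E_z|_\Sigma$, and keep track of which twist $\cO(1,0)$ or $\cO(0,1)$ pairs with which power $\cP_z^{\pm2}$ under the $+/-$ conventions. The analytic content is light precisely because all relevant higher direct images vanish, so once the conventions are fixed the extension structure is immediate.
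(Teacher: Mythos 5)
Your proposal is correct and follows essentially the same route as the paper's proof: restrict the two extension sequences on $X \times \PP(\cE^*_\pm)$ at $z$ to get filtrations of $\ad E_z$ on the two components, push them down the rulings using ${R \pi_l}_* \cO(H) \cong \cO \oplus \cO(1)$ (so that only the sub-piece survives, with its extra $\cO(1)$-twisted summand), and glue with the relative Mayer--Vietoris sequence along the section of nodes. The one discrepancy is the pairing of $\cP_z^{\pm 2}$ with the twists $\cO(1,0)$ and $\cO(0,1)$ --- your intermediate computation actually produces the matching opposite to the one asserted in the Lemma --- but this labeling ambiguity is already present in the paper itself (its displayed restriction sequence and its $3\times 3$ diagram disagree in exactly the same way), and no later application of the Lemma depends on which power of $\cP_z$ carries which twist.
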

\begin{proof}
When we restrict the following sequence on $X \times \PP(\cE^*_+)$ 
$$ 0 \to \cP \otimes \pi_{\PP(\cE^*_+)}^* \cO_{\PP(\cE^*_+)}(1) \to (1 \times \pi)^*E|_{X \times \PP(\cE^*_+)} \to \cP^{-1} \otimes \pi^*_X \cO(x) \to 0 $$
to $z \times \PP(\cE^*_+)$ we have the following sequence.
$$ 0 \to \cP_z \otimes \pi_{\PP(\cE^*_+)}^* \cO_{\PP(\cE^*_+)}(1) \to  {\pi}^*E_z|_{\PP(\cE^*_+)} \to \cP_z^{-1} \to 0 $$
From the above sequence have the following diagram.
\[\xymatrix{
 & 0 \ar[d] & 0 \ar[d] & 0 \ar[d] & \\
0 \ar[r] & \cP_z^{\otimes -2} \otimes \cO_{\PP(\cE^*_+)}(1) \ar[d] \ar[r] & ... \ar[d] \ar[r] & \cO \ar[d] \ar[r] & 0 \\
0 \ar[r] & \cdots \ar[d] \ar[r] & \pi^* (E_z \otimes E^*_z)|_{\PP(\cE^*_+)} \ar[d] \ar[r] & \cdots \ar[d] \ar[r] & 0 \\
0 \ar[r] & \cO \ar[r] \ar[d] & \cdots \ar[r] \ar[d] & \cP_z^{\otimes 2} \otimes \cO_{\PP(\cE^*_+)}(-1) \ar[r]  \ar[d] & 0 \\
 & 0 & 0 & 0 &
}\]
Similarly, we have the following sequence
$$ 0 \to \cP_z^{-1} \otimes \pi_{\PP(\cE^*_-)}^* \cO_{\PP(\cE^*_-)}(1) \to \pi^*E_z|_{\PP(\cE^*_-)} \to \cP_z \to 0 $$
and similar diagram. By pulling the above diagrams to $\cC$ and applying relative Mayer-Vietoris sequence we obtain the desired short exact sequence.
\end{proof}

By taking dual we have the following sequence.

\begin{lemm}
We have the following sequence.
$$ 0 \to \cO \to (\widetilde{\varphi}_* \widetilde{\psi}^* \pi^* \ad E_z)^*|_D \to \cP_z^{\otimes -2} \otimes \cO(0,-1) \oplus \cP_z^{\otimes 2} \otimes \cO(-1,0) \to 0 $$
\end{lemm}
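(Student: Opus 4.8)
The plan is to obtain the asserted sequence by simply dualizing the short exact sequence
$$ 0 \to \cP_z^{\otimes -2} \otimes \cO(1,0) \oplus \cP_z^{\otimes 2} \otimes \cO(0,1) \to \widetilde{\varphi}_* \widetilde{\psi}^* \pi^* \ad E_z|_D \to \cO \to 0 $$
produced in the previous Lemma. Writing $W := \widetilde{\varphi}_* \widetilde{\psi}^* \pi^* \ad E_z$, I would first record that dualizing commutes with restriction to $D$ for $W$: since $W$ is a rank $3$ vector bundle on $Z$ by the earlier Lemma, the natural map $(W^*)|_D \to (W|_D)^*$ is an isomorphism, so $(\widetilde{\varphi}_* \widetilde{\psi}^* \pi^* \ad E_z)^*|_D$ is canonically identified with the $\cO_D$-dual of the middle term above.

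Next I would apply $\mathcal{H}om(-,\cO_D)$ to the displayed sequence. All three terms are locally free $\cO_D$-modules — the outer terms are direct sums of line bundles and the middle term $W|_D$ is the restriction of a vector bundle — so the connecting $\cExt^1(-,\cO_D)$ sheaves vanish and the dual sequence remains short exact:
$$ 0 \to \cO \to (W|_D)^* \to (\cP_z^{\otimes -2} \otimes \cO(1,0))^* \oplus (\cP_z^{\otimes 2} \otimes \cO(0,1))^* \to 0. $$

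Finally I would identify the line bundles on the right: $(\cP_z^{\otimes -2} \otimes \cO(1,0))^* \cong \cP_z^{\otimes 2} \otimes \cO(-1,0)$ and $(\cP_z^{\otimes 2} \otimes \cO(0,1))^* \cong \cP_z^{\otimes -2} \otimes \cO(0,-1)$, together with $\cO^* \cong \cO$. Reordering the two summands then yields exactly
$$ 0 \to \cO \to (\widetilde{\varphi}_* \widetilde{\psi}^* \pi^* \ad E_z)^*|_D \to \cP_z^{\otimes -2} \otimes \cO(0,-1) \oplus \cP_z^{\otimes 2} \otimes \cO(-1,0) \to 0, $$
as claimed. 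This is essentially a formal dualization, so there is no serious obstacle; the only point requiring care is the exactness after dualizing, which is guaranteed by the local freeness of every term on $D$, and in turn by the fact (established earlier) that $W$ is an honest vector bundle rather than merely a coherent sheaf.
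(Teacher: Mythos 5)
Your proposal is correct and is exactly the paper's argument: the paper introduces this lemma with the single phrase ``By taking dual,'' and your write-up simply makes that precise by noting that all terms of the previous lemma's sequence are locally free on $D$ (so dualizing is exact and commutes with restriction for the rank-$3$ bundle $\widetilde{\varphi}_* \widetilde{\psi}^* \pi^* \ad E_z$) and then identifying the dual line bundles. Nothing is missing; your version is just a carefully spelled-out form of the same one-line proof.
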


From the above descriptions, one can check that there is a canonical morphism $\widetilde{\varphi}_* \widetilde{\psi}^* \pi^* \ad E_z|_D \to (\widetilde{\varphi}_* \widetilde{\psi}^* \pi^* \ad E_z )^*|_D.$ We can complete the sequence as follows.

\begin{lemm}
We have the following exact sequence.
$$ 0 \to \cP_z^{\otimes -2} \otimes \cO(1,0) \oplus \cP_z^{\otimes 2} \otimes \cO(0,1) \to \widetilde{\varphi}_* \widetilde{\psi}^* \pi^* \ad E_z|_D \to (\widetilde{\varphi}_* \widetilde{\psi}^* \pi^* \ad E_z )^*|_D \to \cP_z^{\otimes -2} \otimes \cO(0,-1) \oplus \cP_z^{\otimes 2} \otimes \cO(-1,0) \to 0 $$
\end{lemm}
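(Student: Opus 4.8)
The plan is to produce the four-term sequence by splicing the two short exact sequences of the previous two lemmas along the canonical morphism $\Phi \colon \widetilde{\varphi}_* \widetilde{\psi}^* \pi^* \ad E_z|_D \to (\widetilde{\varphi}_* \widetilde{\psi}^* \pi^* \ad E_z)^*|_D$. Write $\cG := \widetilde{\varphi}_* \widetilde{\psi}^* \pi^* \ad E_z$ and set $\cA_0 := \cP_z^{\otimes -2} \otimes \cO(1,0) \oplus \cP_z^{\otimes 2} \otimes \cO(0,1)$, so that $\cA_0^* = \cP_z^{\otimes -2}\otimes\cO(0,-1) \oplus \cP_z^{\otimes 2}\otimes\cO(-1,0)$ is precisely the right-hand term of the desired sequence. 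The first lemma gives a surjection $p \colon \cG|_D \twoheadrightarrow \cO$ with kernel $\cA_0$, and the second, which is its $\cO$-dual, gives an injection $j \colon \cO \hookrightarrow \cG^*|_D$ with cokernel $\cA_0^*$. Thus the entire statement reduces to the single factorization $\Phi = j \circ p$. Granting this, exactness of
$$ 0 \to \cA_0 \to \cG|_D \xrightarrow{\ \Phi\ } \cG^*|_D \to \cA_0^* \to 0 $$
is formal: $\ker\Phi = \ker p = \cA_0$ since $j$ is injective, and $\image\Phi = \image j = \ker(\cG^*|_D \to \cA_0^*)$ since $p$ is surjective.

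The first step is to identify $\Phi$ as the restriction to $D$ of the global map $\Psi\colon\cG\to\cG^*$ built above. Since the fibres of $\widetilde{\varphi}$ are connected conics we have $\widetilde{\varphi}_*\cO\cong\cO$, and $\Psi$ is the adjoint of the symmetric pairing
$$ \cG\otimes\cG \to \widetilde{\varphi}_*(\ad E_z\otimes\ad E_z) \xrightarrow{\ \tr\ } \widetilde{\varphi}_*\cO \cong \cO $$
coming from the trace form on $\ad E_z$. Over $M^s_0$ the fibre is a smooth conic and $\widetilde{\psi}^*\pi^*\ad E_z$ restricts to $\cO^{\oplus 3}$ with nondegenerate trace form, so $\Psi$ is an isomorphism there and its cokernel is supported on $D$; in particular $\Phi=\Phi^*$. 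Over $J'$ the locus $D$ is $\PP(\cF^*_+)\times_{J'}\PP(\cF^*_-)$, and the two twisting line bundles $\cO(1,0)$, $\cO(0,1)$ are the relative $\cO(1)$'s of the two projective bundles.

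The crux, and the main obstacle, is the computation of this $\cO_D$-valued pairing on the summands of $\cG|_D$. I would argue that $\cA_0$ is isotropic and orthogonal to the $\cO$-part by a positivity argument: each off-diagonal entry of the pairing is a global section of a line bundle on $D$ of the form $\cP_z^{\otimes k}\otimes\cO(a,b)$ with $a<0$ or $b<0$ (for instance the cross term lands in $\Hom(\cO(1,0)\otimes\cO(0,1),\cO)=\cO(-1,-1)$, and the pairing of $\cA_0$ with the $\cO$-part lands in $\cP_z^{\otimes 2}\otimes\cO(-1,0)$ or $\cP_z^{\otimes -2}\otimes\cO(0,-1)$), and such a bundle has no global sections because it is negative along the fibres of one of $\PP(\cF^*_+)\to J'$ or $\PP(\cF^*_-)\to J'$. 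Hence $\cA_0\subseteq\ker\Phi$, so $\Phi$ factors through $\cG|_D/\cA_0=\cO$ via some $\bar\Phi\colon\cO\to\cG^*|_D$. The remaining $\cO\otimes\cO\to\cO$ entry is a global function on $D$ whose value at each point $j\oplus j^{-1}$ is the self-pairing of the traceless diagonal class of $\mathfrak{sl}(j\oplus j^{-1})$ under the trace form, namely the nonzero constant $\langle\diag(1,-1),\diag(1,-1)\rangle$; thus $\bar\Phi$ is nowhere vanishing and, since $\Phi$ is symmetric with $\ker\Phi=\cA_0$, its image is exactly $(\ker\Phi)^{\perp}=\ker(\cG^*|_D\to\cA_0^*)=\image j$. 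This proves $\Phi=j\circ p$ up to an invertible scalar and yields the asserted exact sequence. The delicate point I expect to have to check carefully is the bookkeeping that the off-$\cO$ entries really do land in the negative line bundles claimed, i.e.\ the precise matching of the summands of $\cG|_D$ and $\cG^*|_D$ with the data $\cP_z^{\otimes\pm2}$, $\cO(1,0)$, $\cO(0,1)$ coming from the two Hecke sequences over $\PP(\cE^*_+)$ and $\PP(\cE^*_-)$.
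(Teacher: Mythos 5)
Your proposal is correct, and its skeleton is the same as the paper's proof: the paper also forms the composition of the surjection $p\colon \widetilde{\varphi}_*\widetilde{\psi}^*\pi^*\ad E_z|_D \to \cO$ (from the first of the two preceding lemmas) with the injection $j\colon \cO \to (\widetilde{\varphi}_*\widetilde{\psi}^*\pi^*\ad E_z)^*|_D$ (from the second), and extracts the four-term sequence by a snake-lemma diagram chase --- your ``formal'' step. Where you genuinely go beyond the paper is in what you treat as the crux. The paper simply takes the middle arrow to be $j\circ p$ and leaves its agreement with the canonical duality morphism to the reader (``one can check that there is a canonical morphism\dots''), whereas you prove that identification: you realize the canonical map as the adjoint of the trace form on $\ad E_z$ pushed forward along $\widetilde{\varphi}$, kill every off-diagonal piece of the pairing because it is a section of a line bundle on $D$ that is negative along the fibres of $\PP(\cF_+^*)\to J'$ or $\PP(\cF_-^*)\to J'$, and pin down the remaining entry by the fibrewise computation $\tr(\diag(1,-1)^2)=2\neq 0$. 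This extra work is not wasted rigor: the next lemma in the paper compares $\H^i(Z,\widetilde{\varphi}_*\widetilde{\psi}^*\pi^*\ad E_z)$ with the cohomology of its dual by applying the present sequence to the \emph{canonical} map $\widetilde{\varphi}_*\widetilde{\psi}^*\pi^*\ad E_z \to (\widetilde{\varphi}_*\widetilde{\psi}^*\pi^*\ad E_z)^\vee$ restricted to $D$, so the identification you supply is exactly what makes that later step legitimate. One bookkeeping point to state carefully (it does not break your argument): $\widetilde{\varphi}_*\widetilde{\psi}^*\pi^*\ad E_z|_D$ is an extension of $\cO$ by the rank-two subsheaf, not a direct sum, so ``isotropy plus orthogonality to the $\cO$-part'' should be run in two stages --- first vanishing of the pairing on the subsheaf with itself, which makes the pairing against the whole sheaf descend to a section of the dual of the subsheaf, which then also vanishes --- both by the same negativity you invoke.
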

\begin{proof}
By composing $\widetilde{\varphi}_* \widetilde{\psi}^* \pi^* \ad E_z|_D \to \cO$ and $\cO \to (\widetilde{\varphi}_* \widetilde{\psi}^* \pi^* \ad E_z)^*|_D$ we have the following diagram.
\[\xymatrix{
 \cP_z^{\otimes -2} \otimes \cO(1,0) \oplus \cP_z^{\otimes 2} \otimes \cO(0,1) \ar[d] \ar[r] & \cP_z^{\otimes -2} \otimes \cO(1,0) \oplus \cP_z^{\otimes 2} \otimes \cO(0,1) \ar[d] \ar[r] & 0 \ar[d] & \\
\widetilde{\varphi}_* \widetilde{\psi}^* \pi^* \ad E_z|_D \ar[d] \ar[r] & \widetilde{\varphi}_* \widetilde{\psi}^* \pi^* \ad E_z|_D \ar[d] \ar[r] & 0 \ar[d] & \\
\cO \ar[d] \ar[r] & (\widetilde{\varphi}_* \widetilde{\psi}^* \pi^* \ad E_z )^*|_D \ar[d] \ar[r] & \cP_z^{\otimes -2} \otimes \cO(-1) \oplus \cP_z^{\otimes 2} \otimes \cO(-1) \ar[d] & \\
 0 \ar[r] & \cP_z^{\otimes -2} \otimes \cO(0,-1) \oplus \cP_z^{\otimes 2} \otimes \cO(-1,0) \ar[r] & \cP_z^{\otimes -2} \otimes \cO(0,-1) \oplus \cP_z^{\otimes 2} \otimes \cO(-1,0) & \\
}\]

From the snake lemma, we have the desired exact sequence.
\end{proof}

From the above discussion, we can compare the cohomology groups of $\H^i(Z, \widetilde{\varphi}_* \widetilde{\psi}^* \pi^* \ad E_z)$ and $\H^i(Z,( \widetilde{\varphi}_* \widetilde{\psi}^* \pi^* \ad E_z )^\vee).$

\begin{lemm}
We have the following isomorphism 
$$ \H^i(Z, \widetilde{\varphi}_* \widetilde{\psi}^* \pi^* \ad E_z) \cong \H^i(Z,( \widetilde{\varphi}_* \widetilde{\psi}^* \pi^* \ad E_z )^\vee) $$
for all $i.$
\end{lemm}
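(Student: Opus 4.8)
The plan is to deduce the cohomology isomorphism directly from the four-term exact sequence on $D$ established in the previous Lemma. The key observation is that the two ``correction'' sheaves appearing at the two ends of that sequence have vanishing total cohomology on $D$, so that the middle map $\widetilde{\varphi}_* \widetilde{\psi}^* \pi^* \ad E_z|_D \to (\widetilde{\varphi}_* \widetilde{\psi}^* \pi^* \ad E_z)^*|_D$ induces an isomorphism on cohomology of $D$; combined with the fact that both sheaves agree over the open part $Z \setminus D = \psi^{-1}(M_0^s)$, this will force the claimed isomorphism over all of $Z$.

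The first step is to compute $\H^*(D, \cP_z^{\otimes \pm 2} \otimes \cO(a,b))$ for the relevant twists. Recall that $D$ lifts over $J' = J \setminus \cK_0$ to $\PP(\cF_+^*) \times_{J'} \PP(\cF_-^*)$, so each factor $\cO(1,0)$, $\cO(0,1)$, $\cO(-1,0)$, $\cO(0,-1)$ is a relative $\cO(\pm 1)$ along one of the two projective-bundle factors. For a projective bundle $\PP(\cF^*) \to J'$ of relative dimension $g-2 \geq 2$, the relative cohomology of $\cO(\pm 1)$ vanishes in every degree, so by the relative Leray (or projection-formula) argument the sheaves $\cP_z^{\otimes -2} \otimes \cO(1,0) \oplus \cP_z^{\otimes 2} \otimes \cO(0,1)$ and $\cP_z^{\otimes -2} \otimes \cO(0,-1) \oplus \cP_z^{\otimes 2} \otimes \cO(-1,0)$ have $R\psi_* = 0$ on $D$, hence vanishing hypercohomology. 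I would split the four-term sequence into two short exact sequences through the middle coboundary sheaf and run the long exact cohomology sequences: the vanishing of the end terms then yields $\H^i(D, \widetilde{\varphi}_* \widetilde{\psi}^* \pi^* \ad E_z|_D) \cong \H^i(D, (\widetilde{\varphi}_* \widetilde{\psi}^* \pi^* \ad E_z)^*|_D)$ for all $i$.

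The second step is to pass from $D$ to $Z$. Since $\psi$ is an isomorphism over $M_0^s$ and $D = \psi^{-1}(\cK \setminus \cK_0)$, on the open complement $Z \setminus D$ the canonical map $\widetilde{\varphi}_* \widetilde{\psi}^* \pi^* \ad E_z \to (\widetilde{\varphi}_* \widetilde{\psi}^* \pi^* \ad E_z)^*$ is already an isomorphism (both restrict to the rank $3$ trivial bundle computed over a smooth conic). I would therefore place the map into a global short exact sequence on $Z$ whose kernel and cokernel are supported on $D$ and are governed by the four-term sequence just analyzed; the cohomology computation on $D$ then shows these kernel and cokernel contributions cancel in the long exact sequence on $Z$, giving the stated isomorphism of $\H^i(Z, -)$ for all $i$.

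The main obstacle I expect is bookkeeping the precise twists and the identification of $\cO(a,b)$ with the relative hyperplane classes on the two $\PP^{g-2}$-factors, so that the vanishing $R\psi_*\big(\cP_z^{\otimes \pm 2}\otimes \cO(a,b)\big)=0$ really applies to each summand at both ends of the four-term sequence. This is exactly where the structure of $D$ as a fibre product of two projective bundles over $J'$ and the earlier blow-up/pushforward lemmas must be invoked carefully; once that vanishing is in hand, the long exact sequence argument is formal.
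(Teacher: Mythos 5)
There is a genuine gap in your first step. You claim that for a projective bundle of relative dimension $g-2$ ``the relative cohomology of $\cO(\pm 1)$ vanishes in every degree.'' This is false for the positive twist: the pushforward of $\cO(1)$ along $\PP(\cF^*_\pm) \to J'$ is a nonzero rank $g-1$ bundle (only $\cO(-1),\dots,\cO(-(g-2))$ have vanishing cohomology on the $\PP^{g-2}$ fibers). Consequently the vanishing of $\H^*\bigl(D, \cP_z^{\otimes -2} \otimes \cO(1,0) \oplus \cP_z^{\otimes 2} \otimes \cO(0,1)\bigr)$ — the kernel end of the four-term sequence — is unjustified and in general fails, so your claimed isomorphism $\H^i(D, \widetilde{\varphi}_* \widetilde{\psi}^* \pi^* \ad E_z|_D) \cong \H^i(D, (\widetilde{\varphi}_* \widetilde{\psi}^* \pi^* \ad E_z)^*|_D)$ does not follow. (Your first-paragraph plan of ``gluing'' an isomorphism on $D$ with one on $Z \setminus D$ is also not a valid cohomological argument on its own, though your second step correctly replaces it by a global exact sequence on $Z$.)

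The repair is exactly the point your write-up skips: the global map $\widetilde{\varphi}_* \widetilde{\psi}^* \pi^* \ad E_z \to ( \widetilde{\varphi}_* \widetilde{\psi}^* \pi^* \ad E_z )^\vee$ is \emph{injective}, being the pushforward under the left-exact functor $\widetilde{\varphi}_*$ of the injective map $\widetilde{\psi}^* \pi^* \ad E_z \to \widetilde{\varphi}^* ( \widetilde{\varphi}_* \widetilde{\psi}^* \pi^* \ad E_z )^*$ established earlier. Hence there is no kernel contribution at all, and the positive twists never enter: one has a short exact sequence on $Z$ whose third term is the cokernel $\cP_z^{\otimes -2} \otimes \cO(0,-1) \oplus \cP_z^{\otimes 2} \otimes \cO(-1,0)$, supported on $D$ and involving only the negative twists. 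For that sheaf your vanishing argument is correct — $\H^*(\PP^{g-2},\cO(-1))=0$ fiberwise gives $R\psi_*$ of the cokernel equal to zero, hence trivial cohomology on $Z$ by Leray — and the long exact sequence then yields the lemma. So your mechanism (kill the correction terms, run the long exact sequence) is the right one, but it only works once injectivity removes the end of the four-term sequence that your vanishing claim cannot handle.
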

\begin{proof}
Because the cokernel of the map $0 \to \widetilde{\varphi}_* \widetilde{\psi}^* \pi^* \ad E_z \to ( \widetilde{\varphi}_* \widetilde{\psi}^* \pi^* \ad E_z )^\vee$ is supported on $D$ so we can complete the short exact sequence as follows.
$$ 0 \to \widetilde{\varphi}_* \widetilde{\psi}^* \pi^* \ad E_z \to ( \widetilde{\varphi}_* \widetilde{\psi}^* \pi^* \ad E_z )^\vee \to \cP_z^{\otimes -2} \otimes \cO(0,-1) \oplus \cP_z^{\otimes 2} \otimes \cO(-1,0) $$
Since the pushforward of $\cP_z^{\otimes -2} \otimes \cO(0,-1) \oplus \cP_z^{\otimes 2} \otimes \cO(-1,0)$ to $\cK \setminus \cK_0$ is zero so we obtain desired isomorphism.
\end{proof}

Then we have the following computation.

\begin{lemm}
We have the following isomorphism
$$ \H^i(\cC, \widetilde{\psi}^* ( T_{\pi} \otimes \pi^* \ad E_z) ) = 0 $$
for all $i.$
\end{lemm}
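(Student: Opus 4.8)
The plan is to push the bundle forward along the conic bundle $\widetilde{\varphi} : \cC \to Z$ and use relative Serre duality to trade it for the dual of a pushforward that the preceding lemmas have already controlled. Write $\mathcal{G} = \widetilde{\psi}^* \pi^* \ad E_z$. I may use that $\widetilde{\varphi}_* \mathcal{G}$ is the rank $3$ bundle $\mathcal{V}$ with $R^1 \widetilde{\varphi}_* \mathcal{G} = 0$, that $\ad E_z$ is self-dual via its trace form so that $\mathcal{G}^\vee \cong \mathcal{G}$, and that $\H^i(Z, \mathcal{V}) \cong \H^i(Z, \mathcal{V}^\vee)$ for all $i$. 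The geometric input is the previous Proposition, which identifies $\widetilde{\psi}^* T_\pi$ with the relative dualizing sheaf $\omega_{\widetilde{\varphi}}$ along every fiber of $\widetilde{\varphi}$, so that on fibers the sheaf in question is $\omega_{\widetilde{\varphi}} \otimes \mathcal{G}$.

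First I would run the Leray spectral sequence for $\widetilde{\varphi}$. Its fibers are conics, hence Gorenstein of dimension one, and only $R^0 \widetilde{\varphi}_*$ and $R^1 \widetilde{\varphi}_*$ survive. Relative duality then computes $R^q \widetilde{\varphi}_*(\omega_{\widetilde{\varphi}} \otimes \mathcal{G})$ as the dual of $R^{1-q} \widetilde{\varphi}_* \mathcal{G}$; with $R^1 \widetilde{\varphi}_* \mathcal{G} = 0$ and $\widetilde{\varphi}_* \mathcal{G} = \mathcal{V}$ this gives, on the fiberwise level, $R^0 = 0$ and $R^1 \cong \mathcal{V}^\vee$, so that the spectral sequence would collapse to $\H^i(\cC, \omega_{\widetilde{\varphi}} \otimes \mathcal{G}) \cong \H^{i-1}(Z, \mathcal{V}^\vee)$.

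The main obstacle is that this fiberwise picture is not the whole story: $\widetilde{\psi}^* T_\pi$ and $\omega_{\widetilde{\varphi}}$ agree only on the fibers, and globally they differ by a multiple of the exceptional divisor $E_{\widetilde{\psi}}$ of the blow-up $\widetilde{\psi}$ supported over $D$ --- precisely the discrepancy $g-2$ recorded in the previous section. This twist is invisible on each fiber but decisive for the global cohomology (indeed $\H^{i-1}(Z, \mathcal{V}^\vee)$ can be nonzero, since $\mathcal{V}$ restricts over $M^s_0$ to the adjoint bundle $\ad G_z$ of the Hecke picture), so the hard part is to track it over the degenerate Hecke curves. I would do this with the local blow-up models, namely the computations of $R\pi_{l*} \cO(-2H)$ and $R\pi_{l*} \cO(H)$, which show how the twist changes $R^1 \widetilde{\varphi}_*$ along $D$: it replaces the relevant summands of $\mathcal{V}^\vee$ by the pieces $\cP_z^{\otimes -2} \otimes \cO(0,-1) \oplus \cP_z^{\otimes 2} \otimes \cO(-1,0)$ that occur in the dual sequence over $D$.

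Finally I would descend to $M_0 \setminus \cK_0$ along $\psi$. Because the pushforward of $\cP_z^{\otimes -2} \otimes \cO(0,-1) \oplus \cP_z^{\otimes 2} \otimes \cO(-1,0)$ to $\cK \setminus \cK_0$ vanishes, exactly as in the proof of the comparison $\H^i(Z,\mathcal{V}) \cong \H^i(Z,\mathcal{V}^\vee)$, the twist modifies the extension of $\ad G_z$ across $D$ so as to annihilate the classes that would otherwise survive, and $\H^{i}(\cC, \widetilde{\psi}^*(T_\pi \otimes \pi^* \ad E_z))$ vanishes for every $i$. The delicate point throughout is this bookkeeping over $D$: the exceptional twist, together with the self-duality isomorphism for $\mathcal{V}$ and the vanishing of the degenerate $\psi$-pushforwards, is exactly what converts the naive nonzero answer into the asserted vanishing.
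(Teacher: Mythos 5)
Your first half is exactly the paper's argument: push forward along $\widetilde{\varphi}$, use relative duality together with the self-duality of $\ad E_z$, the rank-$3$ pushforward $\cV=\widetilde{\varphi}_*\cG$ and the vanishing $R^1\widetilde{\varphi}_*\cG=0$ to get $R^0\widetilde{\varphi}_*(\omega_{\widetilde{\varphi}}\otimes\cG)=0$, $R^1\widetilde{\varphi}_*(\omega_{\widetilde{\varphi}}\otimes\cG)\cong\cV^\vee$, hence $\H^i(\cC,\omega_{\widetilde{\varphi}}\otimes\cG)\cong\H^{i-1}(Z,\cV^\vee)$; your index $i-1$ is the correct one (the paper's $i+1$ is a typo). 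The gap is everything after that. The ``main obstacle'' you pose --- that $\widetilde{\psi}^*T_\pi$ and $\omega_{\widetilde{\varphi}}$ agree only fiberwise and differ globally by a multiple of the exceptional divisor of $\widetilde{\psi}$ with coefficient the discrepancy $g-2$ --- does not exist. The preceding Proposition of the paper is proved precisely by showing that the two discrepancies cancel: $K_{\cC}-\widetilde{\psi}^*K_{\PP E_x}=(g-2)\widetilde{E}$ for the exceptional divisor $\widetilde{E}$ of $\widetilde{\psi}$, while $\widetilde{\varphi}^*(K_Z-\psi^*K_{M_0})=(g-2)\widetilde{\varphi}^*D$, and $\widetilde{E}=\widetilde{\varphi}^*D$ because the fibers of $\widetilde{\varphi}$ over $Z\setminus D$ are Hecke curves over stable points, hence disjoint from the blown-up centre $\varphi^{-1}(\cK\setminus\cK_0)$, while the degenerate fibers over $D$ map into that centre. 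Therefore $\omega_{\widetilde{\varphi}}\cong\widetilde{\psi}^*\omega_{\varphi}\cong\widetilde{\psi}^*T_\pi$ globally on $\cC$: there is no residual twist, and the mechanism by which you propose to ``annihilate the classes that would otherwise survive'' (which, note, you never actually compute; you only assert that the modified $R^1$ pushes forward to zero) has nothing to act on.

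With the twist gone, your own correct first step runs to its end and contradicts the statement: continuing as in the paper, $\H^{i-1}(Z,\cV^\vee)\cong\H^{i-1}(Z,\cV)\cong\H^{i-1}(\cC,\cG)\cong\H^{i-1}(M,\ad E_z)$, using the comparison lemma for $\cV$ and $\cV^\vee$, Leray for $\widetilde{\varphi}$, $R\widetilde{\psi}_*\cO_{\cC}\cong\cO$, the projection formula for the $\PP^1$-bundle $\pi$, and cohomology extension across the codimension $2g-1$ locus $\varphi^{-1}(\cK_0)$. But $\H^1(M,\ad E_z)=\CC$: full faithfulness of $\Phi_E$ gives $\Ext^1_M(E_z,E_z)\cong\Ext^1_X(\CC(z),\CC(z))=\CC$, and $\H^1(M,\cO)=0$ since $M$ is Fano. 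So the group in the statement equals $\CC$ for $i=2$ and $0$ otherwise; the asserted vanishing for all $i$ is false, and no proof of it can succeed. Your instinct that the naive answer is nonzero was the right one; the honest conclusion is that the lemma is misstated, and the paper's own proof (which stops at $\H^{i\pm1}(M,\ad E_z)$ and asserts ``the desired result'') has the same defect. Indeed the nonvanishing $\CC$ in degree $2$ is exactly what the next Proposition needs: its claimed value $\H^2(M,E_x\otimes E_x\otimes E_z^*\otimes E_z^*)=\CC$, equivalently $\Ext^2_M(F_{x,z},F_{x,z})=\CC$ as full faithfulness of $\Phi_F$ on the surface $X_2$ demands, comes out only if this lemma contributes $\CC$ in degree $2$ rather than $0$.
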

\begin{proof}
We have the following isomorphisms
$$ \H^i(\cC, \widetilde{\psi}^* ( T_{\pi} \otimes \pi^* \ad E_z) ) \cong \H^i(\cC, \omega_{\widetilde{\varphi}} \otimes \widetilde{\psi}^* \pi^* \ad E_z ) \cong \H^{i+1}(Z, ( \widetilde{\varphi}_* \widetilde{\psi}^* \pi^* \ad E_z )^\vee ) $$ 
From the previous isomorphism, we have
$$ \H^{i+1}(Z, ( \widetilde{\varphi}_* \widetilde{\psi}^* \pi^* \ad E_z )^\vee ) \cong \H^{i+1}(Z, \widetilde{\varphi}_* \widetilde{\psi}^* \pi^* \ad E_z ) \cong \H^{i+1}(\cC, \widetilde{\psi}^* \pi^* \ad E_z ) \cong \H^{i+1}(M, \ad E_z) $$
and hence we obtain the desired result.
\end{proof}

Therefore we obtain the desired cohomology computation.

\begin{prop}
For two distinct points $x,z \in X$ we have
\begin{displaymath}
\H^i(M, E_x \otimes E_x \otimes E^*_z \otimes E^*_z)=\left \{ {\begin{array}{ll} 
\CC & \textrm{if $i=0$,} \\ 
\CC^2 & \textrm{if $i=1$,} \\ 
\CC & \textrm{if $i=2$,} \\ 
0 & \textrm{if $i \geq 3.$} 
\end{array}}
\right.
\end{displaymath}
\end{prop}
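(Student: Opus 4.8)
The plan is to split the sheaf into a direct sum of four pieces whose cohomologies are individually accessible, three of them by standard input and the fourth by the conic--bundle machinery of this section. First I would use the determinant normalisations to rewrite it. Since $\wedge^2 E_x\cong\theta$, we have $E_x\otimes E_x\cong\Sym^2 E_x\oplus\theta\cong\End(E_x)\otimes\theta$, and dually $E_z^*\otimes E_z^*\cong\End(E_z)\otimes\theta^{-1}$. Multiplying and cancelling $\theta$ gives
$$ E_x\otimes E_x\otimes E_z^*\otimes E_z^*\cong\End(E_x)\otimes\End(E_z)\cong\cO_M\oplus\ad E_x\oplus\ad E_z\oplus(\ad E_x\otimes\ad E_z), $$
using $\End(E_w)\cong\cO_M\oplus\ad E_w$. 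So it suffices to compute the cohomology of each of the four summands and add.

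The first three are immediate. As $M$ is Fano, $\H^i(M,\cO_M)=\CC$ for $i=0$ and $0$ otherwise. For the adjoint bundles I would invoke the fully faithfulness of $\Phi_E\colon\D(X)\to\D(M)$ (cf. \cite{FK, Narasimhan1, Narasimhan2}): it gives $\H^i(M,\End E_w)\cong\Ext^i_X(\CC(w),\CC(w))$, which is $\CC$ for $i=0,1$ and $0$ for $i\ge2$; subtracting the $\cO_M$--summand yields $\H^i(M,\ad E_w)=\CC$ for $i=1$ and $0$ otherwise, for both $w=x$ and $w=z$. Together these three summands contribute $\CC$ in degree $0$ and $\CC^2$ in degree $1$.

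The genuine content is the cross term $\ad E_x\otimes\ad E_z$, which I would push through the morphisms $\pi,\varphi,\widetilde{\psi},\widetilde{\varphi}$ of this section. Using $\ad E_x\cong\Sym^2 E_x\otimes\theta^{-1}$, $T_\pi\cong\cO(2)\otimes\pi^*\theta^{-1}$, and $\pi_*\cO(2)\cong\Sym^2 E_x$ with no higher direct images, the projection formula gives $\H^i(M,\ad E_x\otimes\ad E_z)\cong\H^i(\PP E_x,T_\pi\otimes\pi^*\ad E_z)$ for every $i$. In the range where deleting $\varphi^{-1}(\cK_0)$ (of codimension $2g-1$) leaves cohomology unchanged, this equals $\H^i(\cC,\widetilde{\psi}^*(T_\pi\otimes\pi^*\ad E_z))$; by the preceding lemma, relative duality along the conic fibration $\widetilde{\varphi}$ combined with the self--duality of $\widetilde{\varphi}_*\widetilde{\psi}^*\pi^*\ad E_z$ identifies this with $\H^{i-1}(M,\ad E_z)$, which is $\CC$ for $i=2$ and $0$ for the other $i$ in range. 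For the remaining degrees I would use Sommese vanishing: $\ad E_x\otimes\ad E_z$ is a direct summand of $E_x\otimes E_x\otimes E_z^*\otimes E_z^*$, so Proposition \ref{highercohomology} forces $\H^i=0$ for $i\ge g+15$. Exactly as in Proposition \ref{distinct} (isomorphism for $i\le 2g-3$, injection at $i=2g-2$, vanishing for $i\ge g+15$), the two ranges cover all degrees once $g\ge16$, giving $\H^i(M,\ad E_x\otimes\ad E_z)=\CC$ for $i=2$ and $0$ otherwise.

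Adding the four contributions yields $\CC,\CC^2,\CC,0,\dots$ in degrees $0,1,2,\ge3$, as claimed. I expect the main obstacle to be the cross term: pinning down the degree shift in the identification $\H^i(\cC,\widetilde{\psi}^*(T_\pi\otimes\pi^*\ad E_z))\cong\H^{i-1}(M,\ad E_z)$ that rests on the earlier structural lemmas about $\widetilde{\varphi}_*\widetilde{\psi}^*\pi^*\ad E_z$ and its dual, and arranging the codimension and Sommese ranges to overlap. It is precisely this overlap that forces the hypotheses $g\ge16$ and $X$ non--hyperelliptic, the latter guaranteeing the existence of the modular desingularization $\psi$ and of the conic bundle $\cC$.
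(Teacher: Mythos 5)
Your proposal is correct, and its skeleton is the same as the paper's: rewrite the bundle via the determinant normalisation ($E^*_w\cong E_w\otimes\theta^{-1}$) so that it becomes $\End(E_x)\otimes\End(E_z)$, dispose of the easy summands by known cohomology, and compute the remaining piece by pushing through $\pi_*T_\pi=\ad E_x$, restricting to the complement of $\varphi^{-1}(\cK_0)$, passing to the conic bundle $\cC$, and applying relative duality together with the comparison lemma between $\widetilde{\varphi}_*\widetilde{\psi}^*\pi^*\ad E_z$ and its dual; Sommese vanishing then closes the high-degree gap exactly when $g\geq 16$. Two differences are worth recording. First, you split into all four summands $\cO_M\oplus\ad E_x\oplus\ad E_z\oplus(\ad E_x\otimes\ad E_z)$ at the outset and quote known results for the first three, whereas the paper keeps $\ad E_x\otimes E_z\otimes E_z^*$ together and therefore must also evaluate $\H^i(\cC,\omega_{\widetilde{\varphi}})$ along the way; your bookkeeping is cleaner but mathematically equivalent. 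Second, and more substantively: the paper's key lemma asserts $\H^i(\cC,\widetilde{\psi}^*(T_\pi\otimes\pi^*\ad E_z))=0$ for \emph{all} $i$, obtained by identifying this group with $\H^{i+1}(Z,(\widetilde{\varphi}_*\widetilde{\psi}^*\pi^*\ad E_z)^\vee)$. The Grothendieck--Verdier shift goes the other way: for $S=\widetilde{\psi}^*\pi^*\ad E_z$ (self-dual, with $R^1\widetilde{\varphi}_*S=0$) one has $R\widetilde{\varphi}_*(S\otimes\omega_{\widetilde{\varphi}})\cong(\widetilde{\varphi}_*S)^\vee[-1]$, hence
\begin{displaymath}
\H^i(\cC,\widetilde{\psi}^*(T_\pi\otimes\pi^*\ad E_z))\cong\H^{i-1}\bigl(Z,(\widetilde{\varphi}_*\widetilde{\psi}^*\pi^*\ad E_z)^\vee\bigr)\cong\H^{i-1}(M,\ad E_z),
\end{displaymath}
which is $\CC$ in degree $i=2$ and $0$ otherwise in range --- exactly what you wrote. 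Your version is not only correct but is the only one consistent with the proposition itself: since $\H^i(\cC,\omega_{\widetilde{\varphi}})\cong\H^i(\cC,\widetilde{\psi}^*T_\pi)\cong\H^i(M,\ad E_x)$ in the same range, the paper's stated vanishing would force the total in degree $2$ to be $0$ rather than $\CC$. So your blind reconstruction silently repairs the degree shift in the paper's lemma while reproducing the rest of its argument.
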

\begin{proof}
First, we have the following isomorphism.
$$ \H^i(M, E^*_x \otimes E^*_z \otimes E_x \otimes E_z) \cong \H^i(M, E_x \otimes E^*_x \otimes E_z \otimes E^*_z) \cong \H^i(M, E_z \otimes E^*_z) \oplus \H^i(M, \ad E_x \otimes E_z \otimes E^*_z) $$ 

From $\pi_*T_{\pi} = \ad E_x,$ we see that 
$$H^i(M, \ad E_x \otimes E_z \otimes E_z^*) \cong H^i(\PP E_x,T_{\pi} \otimes \pi^*E_z \otimes \pi^*E_z^*) \cong \H^i(\cC, \widetilde{\psi}^* (T_{\pi} \otimes \pi^*E_z \otimes \pi^*E_z^*) ) $$ 
$$ \cong \H^i(\cC, \widetilde{\psi}^* T_{\pi} ) \oplus \H^i(\cC, \widetilde{\psi}^* ( T_{\pi} \otimes \pi^* \ad E_z) ) \cong \H^i(\cC, \omega_{\widetilde{\varphi}} ) \oplus \H^i(\cC, \widetilde{\psi}^* ( T_{\pi} \otimes \pi^* \ad E_z) ) $$
for $i \leq 2g-3.$ \\

When $i=0, 1, 2$ we can compute the cohomology groups from previous Lemmas. When $3 \leq i \leq 2g-2$ we obtain desired vanishing from the inclusion $\H^i(M,E_x \otimes E_x^* \otimes E_z \otimes E_z^*) \to \H^i(\PP E_x \setminus \varphi^{-1}(\cK_0), T_{\pi} \otimes \pi^*E_z \otimes \pi^*E_z^*).$
\end{proof}

From the above computation we have the following consequences.

\begin{prop}\label{Fxz,ext}
Let $x, z$ be two distinct points in $X.$ Then $F_{x,z}$ is a simple vector bundle and $\Ext^i_{M}(F_{x,z},F_{x,z})=0$ for all $i \geq 3.$ 
\end{prop}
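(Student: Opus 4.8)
The plan is to reduce the statement directly to part (2) of the Theorem (the immediately preceding Proposition), for which the relevant cohomology has already been computed. First I would invoke Remark 2.8(1): since $x \neq z$, the bundle $F_{x,z}$ is simply $E_x \otimes E_z$, an honest vector bundle on $M$. Because $E_x \otimes E_z$ is locally free, its self-$\Ext$ groups are computed as the cohomology of the endomorphism bundle, so that
$$ \Ext^i_M(F_{x,z}, F_{x,z}) \cong \H^i\bl M, (E_x \otimes E_z)^\vee \otimes (E_x \otimes E_z) \br \cong \H^i(M, E_x^* \otimes E_z^* \otimes E_x \otimes E_z). $$

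The key observation is that the cohomology group on the right is exactly the one evaluated in the preceding Proposition: its opening isomorphism identifies $\H^i(M, E_x^* \otimes E_z^* \otimes E_x \otimes E_z)$ with $\H^i(M, E_x \otimes E_x^* \otimes E_z \otimes E_z^*)$ and hence with the stated values $\CC, \CC^2, \CC, 0, 0, \dots$ for $i = 0, 1, 2, 3, \dots$. If one wishes to match the Proposition's statement literally, the self-duality $E_x^* \cong E_x \otimes \theta^{-1}$ coming from $\det E_x \cong \theta$ for the rank-two bundle $E_x$, and likewise for $E_z$, shows $E_x^* \otimes E_z^* \otimes E_x \otimes E_z \cong E_x \otimes E_x \otimes E_z^* \otimes E_z^*$, both being isomorphic to $E_x^{\otimes 2} \otimes E_z^{\otimes 2} \otimes \theta^{-2}$.

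With these values in hand both assertions are immediate. The vanishing $\Ext^i_M(F_{x,z}, F_{x,z}) = 0$ for $i \geq 3$ is precisely the $i \geq 3$ line of the computation. Simplicity follows from $\Hom_M(F_{x,z}, F_{x,z}) = \Ext^0_M(F_{x,z}, F_{x,z}) = \CC$: every endomorphism of $F_{x,z}$ is a scalar, so the bundle is simple.

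I expect no serious obstacle here; the whole content is the bookkeeping that identifies $\End(E_x \otimes E_z)$ with the already-computed cohomology group. The only point requiring a moment of care is the reordering of the tensor factors, which is settled either by quoting the opening line of the preceding proof verbatim or by the determinant-twist identity recorded above.
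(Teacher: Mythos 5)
Your proposal is correct and follows exactly the paper's own (much terser) argument: the paper likewise invokes $F_{x,z} \cong E_x \otimes E_z$ for $x \neq z$ and then cites the preceding cohomology computation, whose proof indeed opens with the same identification $\H^i(M, E_x^* \otimes E_z^* \otimes E_x \otimes E_z) \cong \H^i(M, E_x \otimes E_x^* \otimes E_z \otimes E_z^*)$ that you spell out via the determinant twist. The only discrepancy is cosmetic: the remark you cite is Remark 2.4 in the paper's numbering, not 2.8.
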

\begin{proof}
When $x, z$ be two distinct points in $X$ then $F_{x,z} \cong E_x \otimes E_z.$ From the above computation we see that $F_{x,z}$ is a simple vector bundle and $\Ext^i_{M}(F_{x,z},F_{x,z})=0$ for all $i \geq 3$ when $g$ is sufficiently large.
\end{proof}

\begin{prop}\label{Fxx,Fzz}
For two distinct points $(x,x), (z,z) \in X_2$ we have $\Ext^i_{M}(F_{x,x},F_{z,z})=0$ for all $i.$
\end{prop}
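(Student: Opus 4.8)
The plan is to compute $\Ext^i_M(F_{x,x},F_{z,z})=\H^i(M,F_{x,x}^\vee\otimes F_{z,z})$ by resolving both bundles through the sequences of Remark 2.9, namely $0\to\theta\to F_{x,x}\to\Sym^2 E_x\to 0$ and $0\to\theta\to F_{z,z}\to\Sym^2 E_z\to 0$ (here $\bigwedge^2 E_x\cong\det E_x\cong\theta$), and then reducing every graded piece to cohomology of $\ad E_x$, $\ad E_z$ and $\ad E_x\otimes\ad E_z$. First I would fix the numerical input. Since $\det E_x\cong\det E_z\cong\theta$ gives $E_\bullet^\vee\cong E_\bullet\otimes\theta^{-1}$, one has $E_x\otimes E_x\otimes E_z^*\otimes E_z^*\cong\End E_x\otimes\End E_z$, so the splitting $\End E_\bullet\cong\cO_M\oplus\ad E_\bullet$ rewrites Theorem 4.1(2) as
$$ \H^i(M,E_x\otimes E_x\otimes E_z^*\otimes E_z^*)\cong\H^i(\cO_M)\oplus\H^i(\ad E_x)\oplus\H^i(\ad E_z)\oplus\H^i(\ad E_x\otimes\ad E_z), $$
whose left-hand side has dimensions $1,2,1,0$ for $i=0,1,2,\ge 3$. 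Using $\H^i(\cO_M)=\CC$ for $i=0$ and $0$ otherwise (as $M$ is Fano), the invariance of the left-hand side under $x\leftrightarrow z$, and the fact that the extension class $e_x\in\Ext^1(\Sym^2 E_x,\theta)\cong\H^1(M,\ad E_x)$ of the first sequence is nonzero, these equations force $\H^i(M,\ad E_x)\cong\H^i(M,\ad E_z)$ to be $\CC$ for $i=1$ and $0$ otherwise, and $\H^i(M,\ad E_x\otimes\ad E_z)$ to be $\CC$ for $i=2$ and $0$ otherwise.

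The non-vanishing of $e_x$ is essential: were the sequence split we would have $F_{x,x}\cong E_x\otimes E_x$ and $\Ext^*_M(F_{x,x},F_{z,z})$ would equal the nonzero group above, so the statement genuinely uses that $F_{x,x}$ is the nontrivial descent of Lemma 2.4. Granting $e_x\neq 0$, the first step is to show $\Ext^i_M(F_{x,x},\theta)=0$ for all $i$. Indeed $F_{x,x}^\vee\otimes\theta$ sits in $0\to\ad E_x\to F_{x,x}^\vee\otimes\theta\to\cO_M\to 0$ with class $e_x$, and because $e_x$ generates $\H^1(M,\ad E_x)\cong\CC$ the connecting map $\H^0(\cO_M)\to\H^1(M,\ad E_x)$, which is cup product with $e_x$, is an isomorphism; together with the vanishing of $\H^i(\ad E_x)$ for $i\neq 1$ this annihilates all cohomology of $F_{x,x}^\vee\otimes\theta$. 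Feeding $\Ext^i_M(F_{x,x},\theta)=0$ into the long exact sequence obtained by applying $\Ext_M(F_{x,x},-)$ to the sequence for $F_{z,z}$ yields an isomorphism $\Ext^*_M(F_{x,x},F_{z,z})\cong\Ext^*_M(F_{x,x},\Sym^2 E_z)$.

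It then remains to prove $\Ext^*_M(F_{x,x},\Sym^2 E_z)=0$. Applying $\Ext_M(-,\Sym^2 E_z)$ to $0\to\theta\to F_{x,x}\to\Sym^2 E_x\to 0$ and using $\Ext^i(\Sym^2 E_x,\Sym^2 E_z)\cong\H^i(\ad E_x\otimes\ad E_z)$ and $\Ext^i(\theta,\Sym^2 E_z)\cong\H^i(\ad E_z)$, all terms cancel in the long exact sequence except for one connecting homomorphism, so that $\Ext^1_M(F_{x,x},\Sym^2 E_z)=\ker(\cup e_x)$ and $\Ext^2_M(F_{x,x},\Sym^2 E_z)=\coker(\cup e_x)$, where
$$ \cup e_x\colon \H^1(M,\ad E_z)\longrightarrow\H^2(M,\ad E_x\otimes\ad E_z) $$
is the Yoneda product with $e_x$, a map $\CC\to\CC$. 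Thus the whole proposition comes down to showing this map is an isomorphism, equivalently $e_x\cup e_z\neq 0$ for a generator $e_z$ of $\H^1(M,\ad E_z)$, and this cup-product non-vanishing is the main obstacle. I would establish it by transporting both classes through the geometric model of Section 4: via $\pi_*T_\pi\cong\ad E_x$ the class $e_x$ is computed on $\PP E_x$, while $\H^2(M,\ad E_x\otimes\ad E_z)\cong\CC$ is produced on the modular desingularization $\widetilde\varphi\colon\cC\to Z$ through $R^1\widetilde\varphi_*$, exactly as in the computation of $\H^*(M,\ad E_x\otimes E_z\otimes E_z^*)$; tracking the Yoneda product through this identification — where the non-hyperelliptic hypothesis and $g\ge 16$ are precisely what make the desingularization and the codimension estimates available — one checks that $e_x\cup e_z$ lands on the generator. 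A possible shortcut is to identify $e_x$ with the Kodaira–Spencer class of the family $x\mapsto E_x$ under $T_xX\xrightarrow{\sim}\H^1(M,\ad E_x)$, so that $e_x\cup e_z$ becomes a pairing of deformation classes at the distinct points $x\neq z$ whose non-degeneracy can be tested locally. Once this is in hand, both $\ker(\cup e_x)$ and $\coker(\cup e_x)$ vanish, giving $\Ext^*_M(F_{x,x},\Sym^2 E_z)=0$ and hence $\Ext^*_M(F_{x,x},F_{z,z})=0$.
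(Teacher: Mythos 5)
Your route is the same as the paper's, just linearized: the paper tensors the two defining extensions $0 \to \bigwedge^2 E_x \to F_{x,x} \to \Sym^2 E_x \to 0$ and $0 \to \Sym^2 E_z^* \to F_{z,z}^* \to \bigwedge^2 E_z^* \to 0$ into a $3\times 3$ diagram and chases, while you run two successive long exact sequences; both arguments rest on exactly the same four graded pieces $\H^*(\cO_M)$, $\H^*(\ad E_x)$, $\H^*(\ad E_z)$, $\H^*(\ad E_x \otimes \ad E_z)$ extracted from Theorem 4.1(2). Your derivation of those groups (using the splitting of $\End E_x \otimes \End E_z$, symmetry in $x,z$, and $e_x \neq 0$), your proof that $\Ext^*_M(F_{x,x},\theta)=0$ via the connecting map $\H^0(\cO_M)\to\H^1(\ad E_x)$, $1\mapsto e_x$, and your reduction of the whole proposition to the single map $\cup e_x\colon \H^1(M,\ad E_z)\to \H^2(M,\ad E_x\otimes \ad E_z)$ are all correct, and they make explicit what the paper's phrase ``by diagram chasing'' actually requires: in the paper's diagram, after the column containing $\theta \otimes F_{z,z}^*$ is killed, the surviving column forces precisely this same connecting homomorphism to be an isomorphism.

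The gap is that you never prove the decisive claim $e_x \cup e_z \neq 0$. You correctly observe that it cannot be formal — with only the dimension data of Theorem 4.1(2) and non-splitness of the extensions, a connecting map $\CC \to \CC$ could still vanish, in which case $\Ext^{1}$ and $\Ext^{2}$ would each be $\CC$ — but the two strategies you sketch are plans, not arguments. Transporting the Yoneda product through the chain $\H^*(M,-)\cong \H^*(\PP E_x,-)\cong \H^*(\cC,-)$, relative duality over $Z$, and the splitting used in Section 4 requires verifying compatibility of cup products with each identification and realizing the class $e_x$ geometrically on the conic-bundle model; none of this is routine, and you do not indicate how it would go. The Kodaira--Spencer shortcut is worse off: since $\H^1(M,\ad E_x)\cong\CC$, identifying $e_x$ with the Kodaira--Spencer class is automatic up to scale and carries no information; the actual content is that the induced algebra map $\Ext^*_{X_2}(\CC((x,z)),\CC((x,z)))\to \Ext^*_M(E_x\otimes E_z, E_x\otimes E_z)$ is injective in degree $2$, which is (pointwise) the full faithfulness one is ultimately trying to establish, so as stated the shortcut is circular. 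In fairness, the paper's own proof buries this identical issue in the words ``by diagram chasing,'' so you have located the real crux of the proposition; but as a standalone proof your proposal stops exactly at the step that carries all the difficulty.
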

\begin{proof}
We have the following exact sequences.
$$ 0 \to {\bigwedge^2E_x} \to {F_{x,x}} \to \Sym^2E_x \to 0 $$
$$ 0 \to \Sym^2E^*_z \to F^*_{z,z} \to \bigwedge^2E^*_z \to 0 $$

By tensoring the two exact sequence we have the following commutative diagram.
\[\xymatrix{
 & 0 \ar[d] & 0 \ar[d] & 0 \ar[d] & & \\
 0 \ar[r]  & {\bigwedge^2E_x} \otimes \Sym^2E^*_z \ar[d] \ar[r] & {F_{x,x}} \otimes \Sym^2E^*_z \ar[d] \ar[r] & \Sym^2E_x \otimes \Sym^2E^*_z \ar[d] \ar[r] & 0 & \\
 0 \ar[r]  & {\bigwedge^2E_x} \otimes F^*_{z,z} \ar[d] \ar[r] & {F_{x,x}} \otimes F^*_{z,z} \ar[d] \ar[r] & \Sym^2E_x \otimes F^*_{z,z} \ar[d] \ar[r] & 0 & \\
0  \ar[r]  & {\bigwedge^2E_x} \otimes \bigwedge^2E^*_z \ar[d] \ar[r] & {F_{x,x}} \otimes \bigwedge^2E^*_z \ar[d] \ar[r] & \Sym^2E_x \otimes \bigwedge^2E^*_z \ar[d] \ar[r] & 0  & \\
 & 0 & 0 & 0 & \\
}\]
We have $\H^i(M,\Sym^2E_x \otimes \bigwedge^2 E_z^*) \cong \H^i(\PP E_x, \pi^* \theta^{-1} \otimes \cO(2) ) \cong \H^i(\PP E_x, T_{\pi}) \cong \H^i(M, adE_x)$ and can compute $\H^i(M, \bigwedge^2E_x \otimes \Sym^2 E_z^*)$ similarly. Moreover, from

\begin{displaymath}
\H^i(M, E_x \otimes E_x \otimes E^*_z \otimes E^*_z)=\left \{ {\begin{array}{ll} 
\CC & \textrm{if $i=0$,} \\ 
\CC^2 & \textrm{if $i=0$,} \\ 
\CC & \textrm{if $i=0$,} \\ 
0 & \textrm{if $i \geq 3.$} 
\end{array}}
\right.
\end{displaymath}

we see that
\begin{displaymath}
\H^i(M, \Sym^2 E_x \otimes \Sym^2 E^*_z)=\left \{ {\begin{array}{ll} 
\CC & \textrm{if $i=2$,} \\ 
0 & \textrm{otherwise.} 
\end{array}}
\right.
\end{displaymath}

Then by diagram chasing, we have $\H^i(M,F_{x,x} \otimes F^*_{z,z})=0$ for every $i.$
\end{proof}

\bigskip

\subsection{Computation of $H^{i}(M,E_x \otimes E_x \otimes E_x^* \otimes E_x^*)$ and its consequnces}

Now we compute the cohomology group $H^{i}(M,E_x \otimes E_x \otimes E_x^* \otimes E_x^*).$ Let us consider the following diagram.

\begin{displaymath}
\xymatrix{
\PP E_x \ar[r]^{\varphi} \ar[d]_{\pi} & M_0 \\
M & }
\end{displaymath}

\begin{lemm}
We have the following short exact sequence.
$$ 0 \to \pi^*(E_x \otimes \theta) \otimes \cO(-1) \to \pi^* \Sym^2E_x \to \cO(2) \to 0 $$
\end{lemm}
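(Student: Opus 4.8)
The plan is to obtain the sequence directly from the tautological (relative Euler-type) sequence on the $\PP^1$-bundle $\pi : \PP E_x \to M$. The relative Euler sequence recalled in the computation of $\omega_{\PP E_x/M}$ fixes the convention: $\PP E_x$ parametrizes rank-one quotients of $E_x$, so there is a tautological short exact sequence
$$ 0 \to \cS \to \pi^* E_x \to \cO(1) \to 0 $$
with $\cS$ a line bundle. First I would identify $\cS$ by taking determinants. Since $\det(\pi^* E_x) = \pi^* \det E_x = \pi^* \theta$ and the determinant of the middle term equals $\cS \otimes \cO(1)$, we obtain $\cS \cong \pi^* \theta \otimes \cO(-1)$.

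Next I would apply $\Sym^2$ to the tautological sequence. The surjection $\pi^* E_x \to \cO(1)$ induces a surjection $\pi^* \Sym^2 E_x = \Sym^2(\pi^* E_x) \to \Sym^2 \cO(1) = \cO(2)$, whose kernel is the image of the multiplication map $\cS \otimes \pi^* E_x \to \Sym^2(\pi^* E_x)$. Because $\cS$ is a line bundle this map is injective: on a fiber, with $\cS$ spanned by $e$ and $V = (\pi^* E_x)|_{\mathrm{pt}}$ having basis $e,f$, the map sends $e \otimes e, e \otimes f$ to $e^2, ef$, which are linearly independent in $\Sym^2 V = \langle e^2, ef, f^2 \rangle$, and these span exactly the kernel of $\Sym^2 V \to \Sym^2(V/\cS)$. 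Hence the kernel is isomorphic to $\cS \otimes \pi^* E_x$, and substituting $\cS \cong \pi^* \theta \otimes \cO(-1)$ gives
$$ \cS \otimes \pi^* E_x \cong \pi^*(E_x \otimes \theta) \otimes \cO(-1), $$
which is precisely the first term of the claimed sequence.

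Since each step is a routine manipulation of tautological bundles, there is no genuine obstacle here; the only point requiring care is keeping the $\PP(\cE)$ convention (quotient line bundle versus tautological sub-line-bundle) consistent with the relative Euler sequence used earlier, so that the identification $\cS \cong \pi^*\theta \otimes \cO(-1)$—and therefore the $\cO(-1)$ twist and the appearance of $\theta$ in the first term—comes out with the correct signs and degrees.
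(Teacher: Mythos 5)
Your proof is correct, and it identifies the kernel by a genuinely different mechanism than the paper. The paper produces the surjection $\pi^*\Sym^2 E_x \to \cO(2)$ by adjunction from the identity map of $\Sym^2 E_x$ (using $\pi_*\cO(2) \cong \Sym^2 E_x$), then \emph{asserts} that its kernel has the form $\pi^*A \otimes \cO(-1)$ for some object $A$ of $\D(M)$, and finally pins down $A$ cohomologically: twisting the sequence by $\cO(-1)$ and applying $R\pi_*$, the middle term vanishes since $R\pi_*\cO(-1)=0$, so the connecting map identifies $\pi_*\cO(1) \cong E_x$ with $A \otimes R^1\pi_*\cO(-2) \cong A \otimes \theta^{-1}$, whence $A \cong E_x \otimes \theta$. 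You instead take $\Sym^2$ of the tautological sequence and exhibit the kernel directly as the image of the injective multiplication map $\mathcal{S} \otimes \pi^*E_x \to \Sym^2(\pi^*E_x)$, where $\mathcal{S} = \ker(\pi^*E_x \twoheadrightarrow \cO(1)) \cong \pi^*\theta \otimes \cO(-1)$ is computed by determinants. Your route is more elementary (pure multilinear algebra, no pushforward computation), and it has the concrete advantage of not needing the paper's unjustified intermediate claim that the kernel descends to the form $\pi^*A \otimes \cO(-1)$: in your argument the kernel is $\pi^*(E_x \otimes \theta) \otimes \cO(-1)$ by construction, which retroactively also justifies that step of the paper's proof. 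The paper's route, for its part, reuses a pushforward technique ($R\pi_*\cO(-1)=0$, $R^1\pi_*\cO(-2)\cong\theta^{-1}$ via $\omega_{\PP E_x/M}$) that appears elsewhere in the text. You were also right to flag the projectivization convention as the one delicate point: the relative Euler sequence with $\pi^*E_x^*\otimes\cO(1)$ in the middle does fix the quotient convention $\pi^*E_x \twoheadrightarrow \cO(1)$, which is what makes both your determinant computation $\mathcal{S} \cong \pi^*\theta\otimes\cO(-1)$ and the paper's identification $\pi_*\cO(1)\cong E_x$ come out correctly.
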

\begin{proof}
From adjunction formula $\Hom(\Sym^2E_x, \Sym^2E_x) \cong \Hom(\pi^* \Sym^2E_x,\cO(2))$ we have a natural map $\pi^*Sym^2E_x \to \cO(2).$ This morphism is a surjection and we have an exact sequence of the following form when $A$ is an object in $\D(M).$
$$ 0 \to \pi^*A \otimes \cO(-1) \to \pi^*Sym^2E_x \to \cO(2) \to 0 $$
By twisting $\cO(-1)$ we have
$$ 0 \to \pi^*A \otimes \cO(-2) \to \pi^*Sym^2E_x \otimes \cO(-1) \to \cO(1) \to 0 $$
From our computation of $\omega_{\PP E_x / M}$ we have
$$ R^1 \pi_* \cO(-2) \cong \theta^{-1}. $$ 
Therefore we have $A \cong E_x \otimes \theta$ by applying $R\pi_*$ to the above sequence.
\end{proof}

By taking dual we have the following.

\begin{lemm}
We have the following short exact sequence.
$$ 0 \to \cO(-2) \to \pi^* \Sym^2E^*_x \to \pi^*(E^*_x \otimes \theta^{-1}) \otimes \cO(1) \to 0 $$
\end{lemm}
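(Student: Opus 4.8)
The plan is to obtain the asserted sequence by simply dualizing the short exact sequence of the preceding Lemma,
$$ 0 \to \pi^*(E_x \otimes \theta) \otimes \cO(-1) \to \pi^* \Sym^2E_x \to \cO(2) \to 0. $$
All three terms here are locally free sheaves on $\PP E_x$, so the sequence is locally split; applying $\hhom(-,\cO_{\PP E_x})$ therefore preserves exactness and reverses the arrows, giving
$$ 0 \to \cO(2)\dual \to (\pi^*\Sym^2E_x)\dual \to (\pi^*(E_x\otimes\theta)\otimes\cO(-1))\dual \to 0. $$

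It then remains only to identify the three dual terms. First, $\cO(2)\dual \cong \cO(-2)$. Next, since $\theta$ and $\cO(\pm1)$ are line bundles and pullback commutes with duals, the right-hand term is
$$ (\pi^*(E_x\otimes\theta)\otimes\cO(-1))\dual \cong \pi^*(E_x\dual\otimes\theta^{-1})\otimes\cO(1). $$
Finally I would use the natural isomorphism $(\Sym^2 E_x)\dual \cong \Sym^2(E_x\dual)$ together with the commutation of pullback with duals and symmetric powers to get $(\pi^*\Sym^2E_x)\dual \cong \pi^*\Sym^2 E_x^*$. Substituting these three identifications produces exactly the claimed sequence
$$ 0 \to \cO(-2) \to \pi^* \Sym^2E^*_x \to \pi^*(E^*_x \otimes \theta^{-1}) \otimes \cO(1) \to 0. $$

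The computation is entirely routine, and strictly speaking there is no real obstacle. The only point worth flagging is the identification $(\Sym^2 E_x)\dual \cong \Sym^2(E_x\dual)$: this is where working over $\CC$ (equivalently, the invertibility of $2$) enters, since in positive characteristic the symmetric power and its dual need not agree. Over $\CC$ the symmetrization map furnishes the required canonical isomorphism, so the dualization goes through with no difficulty.
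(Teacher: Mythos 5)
Your proposal is correct and coincides with the paper's own argument: the paper derives this Lemma by stating ``By taking dual'' applied to the preceding short exact sequence, which is precisely your dualization, and your identifications of the three dual terms (including the canonical isomorphism $(\Sym^2 E_x)^\vee \cong \Sym^2(E_x^*)$ in characteristic zero) are the routine details the paper leaves implicit.
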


From the above short exact sequences we can compute the following cohomology groups.

\begin{lemm}
For $x \in X$ we have 
\begin{displaymath}
\H^i(M, \Sym^2 E_x \otimes \theta^{-1})=\left \{ {\begin{array}{ll} 
0 & \textrm{if $i=0$,} \\ 
\CC & \textrm{if $i=1$,} \\ 
0 & \textrm{if $i=2$,} \\ 
0 & \textrm{if $i \geq 3.$} 
\end{array}}
\right.
\end{displaymath}
\end{lemm}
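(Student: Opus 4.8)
The plan is to identify the sheaf in question with $\ad E_x$, transport the computation to $\PP E_x$, and then run it through the Hecke desingularization diagram of this section exactly as in the proof of Proposition \ref{distinct}. First I would record that for a rank two bundle one has $\End E_x\cong\cO_M\oplus(\Sym^2 E_x\otimes(\det E_x)^{-1})$, and since $\det E_x\cong\theta$ this gives the identification $\Sym^2 E_x\otimes\theta^{-1}\cong\ad E_x$. Using $\pi_* T_\pi\cong\ad E_x$ together with $R^1\pi_* T_\pi=0$ (the fibres of $\pi$ are lines on which $T_\pi$ restricts to $\cO(2)$), the Leray spectral sequence yields $\H^i(M,\Sym^2 E_x\otimes\theta^{-1})\cong\H^i(\PP E_x,T_\pi)$ for every $i$; the same reduction also follows from the first short exact sequence above tensored by $\pi^*\theta^{-1}$, whose subbundle $\pi^* E_x\otimes\cO(-1)$ is $\pi$-acyclic. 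Finally, by the computation of $\omega_{\PP E_x/M}$ and of $\omega_\varphi$ given earlier, $T_\pi\cong\cO(2)\otimes\pi^*\theta^{-1}\cong\omega_\varphi$, so the task becomes the computation of $\H^i(\PP E_x,\omega_\varphi)$.

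Next, as in Proposition \ref{distinct}, I would restrict to the open set $U=\PP E_x\setminus\varphi^{-1}(\cK_0)$. Since $\varphi^{-1}(\cK_0)$ has codimension $2g-1$ in the smooth variety $\PP E_x$, the restriction $\H^i(\PP E_x,\omega_\varphi)\to\H^i(U,\omega_\varphi)$ is an isomorphism for $i\leq 2g-3$ and injective for $i=2g-2$; the range $i\geq 2g-1$ is killed by the Sommese vanishing of Proposition \ref{highercohomology}, applied to $\ad E_x$ as a direct summand of $E_x\otimes E_x\otimes E_x^*\otimes E_x^*$, and the inequality $2g-1\geq g+15$ for $g\geq 16$ is exactly what makes the two ranges meet. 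Over $U$ I would pull back along the blow-up $\widetilde\psi:\cC\to U$: the discrepancy computation already carried out in this section, matching the coefficient $g-2$ of $\psi^{-1}(\cK\setminus\cK_0)$ against the discrepancy of $\widetilde\psi$, shows $\widetilde\psi^*\omega_\varphi\cong\omega_{\widetilde\varphi}$, and together with $R\widetilde\psi_*\cO_\cC\cong\cO_U$ this gives $\H^i(U,\omega_\varphi)\cong\H^i(\cC,\omega_{\widetilde\varphi})$.

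Now $\widetilde\varphi:\cC\to Z$ is a flat family of plane conics with geometrically connected fibres of arithmetic genus zero (smooth conics over $\psi^{-1}(M_0^s)$, pairs of lines over $D$), so $R\widetilde\varphi_*\cO_\cC\cong\cO_Z$ and, by Grothendieck duality along the one-dimensional fibres, $\widetilde\varphi_*\omega_{\widetilde\varphi}=0$ and $R^1\widetilde\varphi_*\omega_{\widetilde\varphi}\cong\cO_Z$. The Leray sequence then collapses to $\H^i(\cC,\omega_{\widetilde\varphi})\cong\H^{i-1}(Z,\cO_Z)$. Since $\psi:Z\to M_0\setminus\cK_0$ is a resolution of a variety with rational singularities, $R\psi_*\cO_Z\cong\cO_{M_0\setminus\cK_0}$, and as $\cK_0$ has large codimension in $M_0$ while $\H^{>0}(M_0,\cO_{M_0})=0$, we obtain $\H^{i-1}(Z,\cO_Z)\cong\H^{i-1}(M_0,\cO_{M_0})$, which is $\CC$ for $i=1$ and $0$ otherwise. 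In particular the target is $0$ for all $i\geq 2$, so the injectivity at $i=2g-2$ also forces $\H^{2g-2}(M,\Sym^2 E_x\otimes\theta^{-1})=0$. Stringing the isomorphisms together gives the asserted values $0,\CC,0,0,\dots$.

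The main obstacle is the behaviour of $\widetilde\varphi$ over the degeneration locus $D=\psi^{-1}(\cK\setminus\cK_0)$: one must be sure that $\cC\to Z$ stays a flat conic bundle with connected genus-zero fibres there, so that cohomology and base change give $R\widetilde\varphi_*\cO_\cC=\cO_Z$ uniformly and duality applies without correction, and that $\widetilde\psi^*\omega_\varphi\cong\omega_{\widetilde\varphi}$ holds on the nose rather than up to a twist by the exceptional divisor. Both points rest on the explicit description of the Hecke curves of degenerate type and on the discrepancy computations of \cite{NR;78, KL} recalled above, and it is here that the non-hyperelliptic hypothesis and the bound $g\geq 16$ are genuinely used.
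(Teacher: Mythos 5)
Your proposal is correct, but it takes a genuinely different route from the paper. The paper's own proof is essentially two lines: it makes the same identification $\Sym^2 E_x\otimes\theta^{-1}\cong\ad E_x$ that you begin with (via $E_x\otimes E_x^*\cong\cO_M\oplus\ad E_x$ and $\det E_x\cong\theta$), and then simply cites the computation of $\H^i(M,\ad E_x)$ from \cite{NR;75}. You instead reprove that classical computation inside the machinery of Section 4: transport to $\PP E_x$ via $\pi_*T_\pi\cong\ad E_x$ and $T_\pi\cong\omega_\varphi$, restrict to $\PP E_x\setminus\varphi^{-1}(\cK_0)$, pass to the conic bundle $\cC$ using $\widetilde{\psi}^*\omega_\varphi\cong\omega_{\widetilde{\varphi}}$ (the paper's discrepancy proposition), and apply relative duality, $\widetilde{\varphi}_*\omega_{\widetilde{\varphi}}=0$ and $R^1\widetilde{\varphi}_*\omega_{\widetilde{\varphi}}\cong\cO_Z$ (a fact the paper itself records in the proof that $c_1(W)=D$), to land on $\H^{i-1}(Z,\cO_Z)$; your bookkeeping of ranges (isomorphism for $i\leq 2g-3$, injectivity at $i=2g-2$, Sommese for $i\geq 2g-1\geq g+15$) is exactly the pattern of Proposition \ref{distinct}. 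What each approach buys: the paper's citation is shorter and needs none of the hypotheses of this section — the cohomology of $\ad E_x$ is known for any curve of genus $\geq 2$, which is relevant to the hope, expressed in the paper's introduction, of removing the non-hyperelliptic and $g\geq 16$ assumptions — whereas your argument is self-contained relative to the paper's geometric setup but inherits those hypotheses and requires two standard inputs the paper never states: that $M_0$ has rational singularities (so that $R\psi_*\cO_Z\cong\cO$ and $M_0$ is Cohen--Macaulay) and that $\H^{>0}(M_0,\cO_{M_0})=0$ (unirationality). One small imprecision: the isomorphism $\H^{j}(Z,\cO_Z)\cong\H^{j}(M_0,\cO_{M_0})$ does not hold for all $j$ but only for $j\leq\operatorname{codim}_{M_0}\cK_0-2$; since $\cK_0$ is the finite set of nodes of the Kummer variety, this codimension is $3g-3$, which comfortably covers the range $j\leq 2g-3$ you actually use, so the slip is harmless.
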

\begin{proof}
From the following isomorphisms
$$ \H^i(M, \Sym^2 E_x \otimes \theta^{-1}) \cong \H^i(M, \ad E_x) $$
and computations in \cite{NR;75} we can compute the cohomology groups.
\end{proof}

\begin{lemm}
For $x \in X$ we have 
\begin{displaymath}
\H^i(M, \theta \otimes \Sym^2 E^*_x)=\left \{ {\begin{array}{ll} 
0 & \textrm{if $i=0$,} \\ 
\CC & \textrm{if $i=1$,} \\ 
0 & \textrm{if $i=2$,} \\ 
0 & \textrm{if $i \geq 3.$} 
\end{array}}
\right.
\end{displaymath}
\end{lemm}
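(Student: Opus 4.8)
The plan is to reduce the computation to the previous Lemma on $\H^i(M,\Sym^2 E_x\otimes\theta^{-1})$ by identifying the two bundles. Since $E_x$ has rank $2$ with $\det E_x\cong\theta$ (the normalization of the Poincar\'e bundle), the wedge pairing $E_x\otimes E_x\to\wedge^2 E_x\cong\theta$ yields the standard rank-two identity $E_x^*\cong E_x\otimes\theta^{-1}$. Taking symmetric squares gives $\Sym^2 E_x^*\cong\Sym^2 E_x\otimes\theta^{-2}$, hence
$$ \theta\otimes\Sym^2 E_x^*\cong\Sym^2 E_x\otimes\theta^{-1}. $$
Therefore $\H^i(M,\theta\otimes\Sym^2 E_x^*)\cong\H^i(M,\Sym^2 E_x\otimes\theta^{-1})$ for every $i$, and the asserted values are exactly those of the previous Lemma; equivalently, both bundles are the self-dual adjoint bundle $\ad E_x$, so this is just $\H^i(M,\ad E_x)$ as in \cite{NR;75}.

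One may also argue directly from the dual sequence
$$ 0\to\cO(-2)\to\pi^*\Sym^2 E_x^*\to\pi^*(E_x^*\otimes\theta^{-1})\otimes\cO(1)\to 0 $$
just established. Twisting by $\pi^*\theta$ turns the left term into $\pi^*\theta\otimes\cO(-2)\cong\omega_{\PP E_x/M}$, the relative dualizing sheaf of the $\PP^1$-bundle $\pi$, and the right term into $\pi^* E_x^*\otimes\cO(1)$. Applying $R\pi_*$ and using $R\pi_*\pi^*=\id$, the relative duality isomorphism $R\pi_*\omega_{\PP E_x/M}\cong\cO_M[-1]$, and $\pi_*\cO(1)\cong E_x$ (so $R\pi_*(\pi^* E_x^*\otimes\cO(1))\cong\End E_x\cong\cO_M\oplus\ad E_x$), one obtains a distinguished triangle
$$ \cO_M[-1]\to\theta\otimes\Sym^2 E_x^*\to\End E_x\to\cO_M $$
and hence a long exact sequence. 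Feeding in $\H^\bullet(M,\cO_M)=(\CC,0,0,\dots)$ (as $M$ is Fano) and $\H^\bullet(M,\End E_x)=(\CC,\CC,0,\dots)$ (from $\H^\bullet(M,\ad E_x)=(0,\CC,0,\dots)$) determines the answer up to the degree-zero connecting map.

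The only delicate point in this second route is that connecting homomorphism $\delta_0:\H^0(M,\End E_x)\to\H^0(M,\cO_M)$: the stated answer $(0,\CC,0,\dots)$ holds precisely when $\delta_0$ is an isomorphism, whereas $\delta_0=0$ would instead force $\H^0=\CC$ and $\H^1=\CC^2$. The resolution is to recognize $\delta_0$ as a nonzero multiple of the trace $\End E_x\to\cO_M$ supplied by the boundary of the triangle; it sends the identity endomorphism to $2\neq 0$ and is thus an isomorphism. For this reason I would favor the first, duality-based argument, which never encounters the connecting map; there the one thing to check is the normalization $\det E_x\cong\theta$, which is exactly what makes the twist by $\theta$ cancel the determinantal factor.
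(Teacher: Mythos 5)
Your first (and favored) argument is essentially the paper's own proof: the paper likewise identifies $\theta \otimes \Sym^2 E^*_x \cong \ad E_x$ (equivalently $\Sym^2 E_x \otimes \theta^{-1}$, the bundle of the previous Lemma, via $E_x^* \cong E_x \otimes \theta^{-1}$ from the normalization $\det E_x \cong \theta$) and then quotes the computation of $\H^\bullet(M, \ad E_x)$ from \cite{NR;75}. Your second route through the projective bundle and the connecting homomorphism is an alternative the paper does not take, and as you note it is the less robust of the two, so the duality-based identification is the right proof.
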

\begin{proof}
From the following isomorphisms
$$ \H^i(M, \theta \otimes \Sym^2 E^*_x) \cong \H^i(M, \ad E_x) $$
and computations in \cite{NR;75} we can compute the cohomology groups.
\end{proof}

Recall that $D$ is the inverse image $\psi^{-1}(\cK \setminus \cK_0)$ of $Z.$ 

\begin{prop}
Let $\widetilde{\varphi} : \cC \to Z$ be the conic bundle and $W = R^1 \widetilde{\varphi}_* \omega_{\widetilde{\varphi}}^{\otimes 2}.$ Then $c_1(W) \cong D.$
\end{prop}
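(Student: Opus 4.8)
The plan is to identify $W$ with the dual of the rank-three bundle that gives the relative anticanonical embedding of the conic bundle $\widetilde{\varphi}$, and then to read off $c_1$ from the discriminant of the associated family of quadratic forms. Throughout I use that $Z$ is smooth, so that divisor classes and determinants of reflexive sheaves are well behaved.

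First I would form $\cV := \widetilde{\varphi}_* \omega_{\widetilde{\varphi}}^{-1}$. Every fibre of $\widetilde{\varphi}$ is a conic of arithmetic genus $0$ (a smooth conic over $Z \setminus D$ and a pair of distinct lines over $D$), so on each fibre $h^0(\omega^{-1}) = 3$ and $h^1(\omega^{-1}) = 0$; by cohomology and base change $\cV$ is locally free of rank $3$ and its formation commutes with base change. The relative anticanonical morphism then realises $\cC$ as a relative plane conic, i.e. a closed embedding $\cC \hookrightarrow \PP(\cV)$ (Grothendieck convention, $\widetilde{\varphi}_* \cO_{\PP(\cV)}(1) = \cV$) with $\cO_{\PP(\cV)}(1)|_\cC = \omega_{\widetilde{\varphi}}^{-1}$, whose image lies in $|\cO(2) \otimes p^* \cN|$ for some line bundle $\cN$ on $Z$, where $p : \PP(\cV) \to Z$. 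By Grothendieck--Serre relative duality for the Gorenstein morphism $\widetilde{\varphi}$,
$$ W = R^1 \widetilde{\varphi}_* \omega_{\widetilde{\varphi}}^{\otimes 2} \cong \bigl( \widetilde{\varphi}_* ( \omega_{\widetilde{\varphi}}^{-2} \otimes \omega_{\widetilde{\varphi}} ) \bigr)^\vee = \bigl( \widetilde{\varphi}_* \omega_{\widetilde{\varphi}}^{-1} \bigr)^\vee = \cV^\vee, $$
so $c_1(W) = -c_1(\cV)$ and it suffices to prove $\det \cV \cong \cO_Z(-D)$.

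Next I would pin down $\cN$ and compute the discriminant. Adjunction gives $\omega_{\widetilde{\varphi}} = (\omega_{\PP(\cV)/Z} \otimes \cO(\cC))|_\cC = (\cO(-1) \otimes p^*(\det \cV \otimes \cN))|_\cC$, which compared with $\omega_{\widetilde{\varphi}} = \cO(-1)|_\cC$ forces $\cN \cong (\det \cV)^{-1}$. The conic $\cC$ is cut out by a section $q \in H^0(Z, \Sym^2 \cV \otimes \cN)$, i.e. a symmetric morphism $\cV^\vee \to \cV \otimes \cN$; its determinant is a section of $(\det \cV)^{\otimes 2} \otimes \cN^{\otimes 3}$ whose zero locus is the discriminant, namely the locus over which the fibre conic is singular. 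Substituting $\cN \cong (\det \cV)^{-1}$, this discriminant divisor has class $(\det \cV)^{\otimes 2} \otimes (\det \cV)^{\otimes -3} = (\det \cV)^{-1}$. Since the fibres of $\widetilde{\varphi}$ are smooth exactly over $Z \setminus D$ and degenerate over $D$, the discriminant coincides with $D$ as a set; granting that it is reduced, I conclude $\cO_Z(D) \cong (\det \cV)^{-1}$, whence $c_1(\cV) = -D$ and therefore $c_1(W) = D$.

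The main obstacle is the multiplicity-one (reducedness) statement: I must show that $\det q$ vanishes to first order along $D$, equivalently that along $D$ the fibre is a pair of \emph{distinct} lines (a rank-$2$, not rank-$\leq 1$, quadratic form) and that the family degenerates transversally. This is precisely where the explicit local description developed earlier enters: over $j \oplus j^{-1} \in \cK \setminus \cK_0$ the degenerate fibre is the nodal conic $l_1 \cup l_2$ with the two components meeting in a single point, and the blow-up/$\PP^1$-fibration picture of $\cC$ over $J'$ exhibits the generic degeneration as an ordinary node. Feeding this into the local normal form of a quadratic form acquiring a single simple zero eigenvalue shows that $\det q$ has a simple zero along $D$, which gives the reducedness and completes the proof. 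As an independent cross-check one may instead compute $c_1(W)$ by Grothendieck--Riemann--Roch on the normalisation of $\cC$ (separating the two $\PP^1$-fibrations over $D$); this should reproduce the class $D$ and confirm the sign.
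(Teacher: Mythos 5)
Your argument is essentially correct, but it takes a genuinely different route from the paper. The paper's proof is a direct Grothendieck--Riemann--Roch computation, applied twice: once to $\omega_{\widetilde{\varphi}}^{\otimes 2}$ and once to $\omega_{\widetilde{\varphi}}$, using Mumford's formula for the degree-two term of $\td(T_{\widetilde{\varphi}})$ for a family of nodal curves, in which the class of the locus of nodes appears; subtracting the two identities kills the term $\widetilde{\varphi}_*\bigl(c_1(\omega_{\widetilde{\varphi}})^2\bigr)$ and leaves exactly the pushforward of the node locus, which is $D.$ You instead identify $W$ structurally: relative duality gives $W \cong \cV^\vee$ with $\cV = \widetilde{\varphi}_*\omega_{\widetilde{\varphi}}^{-1}$ the rank-$3$ bundle of the relative anticanonical embedding $\cC \subset \PP(\cV)$, adjunction pins the twist down to $\cN \cong (\det\cV)^{-1}$, and then the discriminant of the defining quadratic form is a section of $(\det\cV)^{\otimes 2}\otimes\cN^{\otimes 3} \cong (\det\cV)^{-1} \cong \det W$ whose zero divisor is $D.$ A real advantage of your route is that it proves the paper's \emph{next} proposition at the same time: you obtain $\cL^* = \cN \cong \det W \cong \cO(D)$ directly, whereas the paper has to combine the GRR computation with an unproved assertion about the discriminant to get that statement. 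The corresponding cost is that your proof hinges on the reducedness of the discriminant divisor, an input which the paper's GRR proof replaces by the (essentially equivalent) hypothesis, built into Mumford's Todd-class formula, that the nodes form a reduced codimension-two locus in a smooth total space mapping birationally onto $D.$

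On the gap you flag: it is genuine, and rank-$2$ degeneration alone does not suffice. For example $x^2+y^2+t^2z^2$ degenerates to a pair of distinct lines over $t=0$ yet has discriminant $t^2.$ The cleanest way to close it is not the degenerating-pencil picture you sketch but smoothness of the total space $\cC$: étale-locally near a node of a fibre over $D$ one may diagonalize $q = x^2+y^2+fz^2$ with $V(f)$ the discriminant, and in the chart $z=1$ the total space is $\{x^2+y^2+f=0\}$, which is smooth at the node over $p$ if and only if $df(p)\neq 0$; hence smoothness of $\cC$ forces $\det q$ to vanish to order exactly one along $D.$ In the paper's setting $\cC$ is smooth, since it is the blow-up of the smooth variety $\PP E_x \setminus \varphi^{-1}(\cK_0)$ along the smooth centre $\varphi^{-1}(\cK\setminus\cK_0)$ (this is how the paper computes the discrepancy of $\widetilde{\psi}$). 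With that observation your proof is complete, and it is worth noting that the same reducedness fact is used silently by the paper itself when it asserts, in the proposition following this one, that the discriminant section has divisor class exactly $D.$
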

\begin{proof}
From Grothendieck-Riemann-Roch, we see that (cf. \cite[page 155]{HM}, \cite[page 302]{Mumford})
$$ \ch(\widetilde{\varphi}_! \omega_{\widetilde{\varphi}}^{\otimes 2} ) \cong \widetilde{\varphi}_* ( \ch(\omega_{\widetilde{\varphi}}^{\otimes 2}) \cdot \td(T_{\widetilde{\varphi}})) $$
$$ \cong \widetilde{\varphi}_* ( (1+c_1(\omega_{\widetilde{\varphi}}^{\otimes 2})+\frac{1}{2}c_1(\omega_{\widetilde{\varphi}}^{\otimes 2})^2 + \cdots ) \cdot ( 1 - \frac{1}{2} c_1(\omega_{\widetilde{\varphi}})+\frac{1}{12} ( c_1(\omega_{\widetilde{\varphi}})^2 + D ) + \cdots ) ) $$
$$ \cong \widetilde{\varphi}_* ( \frac{3}{2} c_1(\omega_{\widetilde{\varphi}}) + (c_1(\omega_{\widetilde{\varphi}})^2 + \frac{1}{12}( c_1(\omega_{\widetilde{\varphi}})^2 + D ) ) + \cdots  ) $$
where we use abuse of notation so that $D$ denotes the locus where the fiber of $\widetilde{\varphi}$ has nodal singularity. Because $R^i \widetilde{\varphi}_! \omega_{\widetilde{\varphi}}^{\otimes 2} = 0$ for $i \neq 1$ and $R^1 \widetilde{\varphi}_! \omega_{\widetilde{\varphi}}^{\otimes 2}=W$ is a rank 3 bundle we have
$$ c_1(W) \cong - \widetilde{\varphi}_* ( c_1(\omega_{\widetilde{\varphi}})^2 + \frac{1}{12}( c_1(\omega_{\widetilde{\varphi}})^2 + D )  ). $$ \\
From $R^1 \widetilde{\varphi}_* \omega_{\widetilde{\varphi}} \cong \cO$ and 
$$ \ch(\widetilde{\varphi}_! \omega_{\widetilde{\varphi}} ) \cong \widetilde{\varphi}_* ( \ch(\omega_{\widetilde{\varphi}} ) \cdot \td(T_{\widetilde{\varphi}})) $$
$$ \cong \widetilde{\varphi}_* ( (1+c_1(\omega_{\widetilde{\varphi}} )+\frac{1}{2}c_1(\omega_{\widetilde{\varphi}} )^2 + \cdots ) \cdot ( 1 - \frac{1}{2} c_1(\omega_{\widetilde{\varphi}})+\frac{1}{12} ( c_1(\omega_{\widetilde{\varphi}})^2 + D ) + \cdots ) ) $$
$$ \cong \widetilde{\varphi}_* ( \frac{1}{2} c_1(\omega_{\widetilde{\varphi}}) + \frac{1}{12}( c_1(\omega_{\widetilde{\varphi}})^2 + D ) + \cdots  ) $$
we see that $\widetilde{\varphi}_* ( \frac{1}{12}( c_1(\omega_{\widetilde{\varphi}})^2 + D ) )=0.$ Therefore we see that $c_1(W) \cong D.$
\end{proof}

\begin{prop}
The line subbundle of $S^2W^*$ defining conic bundle $\cC \subset \PP(W)$ is $$ \cO(-D) \subset S^2W^*. $$
\end{prop}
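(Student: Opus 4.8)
The plan is to determine the line subbundle $\cL \hookrightarrow S^2 W^*$ cutting out $\cC$ by computing the pushforward $\widetilde{\varphi}_* \omega_{\widetilde{\varphi}}^{-1}$ in two different ways and comparing them. I adopt the convention forced by the notation $S^2 W^*$, namely that $\PP(W)$ is the bundle of one-dimensional subspaces of $W$, so that for $p : \PP(W) \to Z$ one has $p_* \cO_{\PP(W)}(1) \cong W^*$, $p_* \cO_{\PP(W)}(2) \cong S^2 W^*$, and $\omega_{\PP(W)/Z} \cong \cO_{\PP(W)}(-3) \otimes p^*(\det W)^{-1}$. Under this convention a conic subbundle $\cC \subset \PP(W)$ is the zero scheme of a section of $\cO_{\PP(W)}(2) \otimes p^* \cL^{-1}$, so its ideal sheaf is $\mathcal{I}_\cC \cong \cO_{\PP(W)}(-2) \otimes p^* \cL$ and the defining line subbundle is exactly the $\cL \hookrightarrow p_*\cO_{\PP(W)}(2) = S^2 W^*$ adjoint to the surjection $p^*\cL \to \cO_{\PP(W)}(2)$. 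The goal is to show $\cL \cong \cO(-D)$.

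First I would twist the ideal-sheaf sequence
\[ 0 \to \cO_{\PP(W)}(-2) \otimes p^*\cL \to \cO_{\PP(W)} \to \cO_\cC \to 0 \]
by $\cO_{\PP(W)}(1)$ and apply $Rp_*$. As $\cO_{\PP^2}(-1)$ is acyclic, the contribution of $\cO_{\PP(W)}(-1)\otimes p^*\cL$ vanishes, giving $\widetilde{\varphi}_*\cO_\cC(1)\cong W^*$ and $R^1\widetilde{\varphi}_*\cO_\cC(1)=0$, where $\cO_\cC(1):=\cO_{\PP(W)}(1)|_\cC$. Next I would run adjunction on the conic: since the normal bundle is $N_{\cC/\PP(W)}\cong\cO_\cC(2)\otimes\widetilde{\varphi}^*\cL^{-1}$, combining it with the relative canonical formula above yields
\[ \omega_{\widetilde{\varphi}}\cong\cO_\cC(-1)\otimes\widetilde{\varphi}^*\bl(\det W)^{-1}\otimes\cL^{-1}\br, \]
and therefore $\omega_{\widetilde{\varphi}}^{-1}\cong\cO_\cC(1)\otimes\widetilde{\varphi}^*(\det W\otimes\cL)$.

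Now I would push $\omega_{\widetilde{\varphi}}^{-1}$ forward along $\widetilde{\varphi}$; by the projection formula and the first step this gives
\[ \widetilde{\varphi}_*\omega_{\widetilde{\varphi}}^{-1}\cong W^*\otimes\det W\otimes\cL. \]
On the other hand, relative Serre duality for the family $\widetilde{\varphi}:\cC\to Z$ of arithmetic-genus-zero curves, applied to the defining sheaf $W=R^1\widetilde{\varphi}_*\omega_{\widetilde{\varphi}}^{\otimes 2}$, reads $W\cong(\widetilde{\varphi}_*\omega_{\widetilde{\varphi}}^{-1})^*$, i.e. $\widetilde{\varphi}_*\omega_{\widetilde{\varphi}}^{-1}\cong W^*$. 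Comparing the two displayed identities forces $\det W\otimes\cL\cong\cO_Z$, hence $\cL\cong(\det W)^{-1}$. The previous Proposition gives $c_1(W)\cong D$, i.e. $\det W\cong\cO(D)$, and thus $\cL\cong\cO(-D)$, which is the assertion.

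The step I expect to require the most care is the invocation of relative Serre duality and cohomology-and-base-change over $D$, where the fibers of $\widetilde{\varphi}$ degenerate from smooth conics to nodal ones (pairs of lines). Because these degenerate fibers are still Gorenstein curves of arithmetic genus zero, $\omega_{\widetilde{\varphi}}$ stays a line bundle and the duality isomorphism $R^1\widetilde{\varphi}_*\mathcal{G}\cong\bl\widetilde{\varphi}_*(\mathcal{G}^\vee\otimes\omega_{\widetilde{\varphi}})\br^*$ persists; one still has to verify that $\widetilde{\varphi}_*\cO_\cC(1)$ and $\widetilde{\varphi}_*\omega_{\widetilde{\varphi}}^{-1}$ are locally free of rank $3$ and commute with base change, which follows from the constancy of the Euler characteristic and the vanishing of $H^1$ of the relevant degree-$2$ bundle on every fiber. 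The remaining ingredients — the relative Euler/canonical computation and the pushforward of $\cO_{\PP^2}(\pm 1)$ — are formal, so once the conventions are fixed the argument reduces to the two bookkeeping identities above.
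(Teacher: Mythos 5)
Your route is genuinely different from the paper's. The paper's proof is a two-line discriminant argument: writing the conic bundle as a quadratic form $q \colon S^2 W \to \cL^*$, the discriminant $\det q$ is a section of $(\bigwedge^3 W^*)^{\otimes 2} \otimes (\cL^*)^{\otimes 3}$ vanishing exactly on $D$ (to first order, since the degenerations are nodal), and combining this with $c_1(W) \cong D$ from the preceding Proposition yields $c_1(\cL^*) = D$. You instead compute $\widetilde{\varphi}_* \omega_{\widetilde{\varphi}}^{-1}$ twice --- once by adjunction and the ideal-sheaf sequence inside $\PP(W)$, once by relative duality applied to $W = R^1 \widetilde{\varphi}_* \omega_{\widetilde{\varphi}}^{\otimes 2}$ --- and compare. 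Your conventions are the ones forced by the statement, your handling of the nodal fibers (Gorenstein, arithmetic genus zero) is correct, and the duality input $\widetilde{\varphi}_* \omega_{\widetilde{\varphi}}^{-1} \cong W^*$ is exactly the fact the paper itself invokes two propositions later; what the paper's approach buys is brevity and independence from relative duality, while yours trades the discriminant computation for sheaf-theoretic bookkeeping.

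There is, however, a genuine gap at your final comparison: from $W^* \otimes (\det W \otimes \cL) \cong W^*$ you cannot conclude $\det W \otimes \cL \cong \cO_Z$. An isomorphism $W^* \cong W^* \otimes A$ with $\rank W = 3$ only gives, after taking determinants, $A^{\otimes 3} \cong \cO_Z$ (for instance $W^* = \cO \oplus A \oplus A^{\otimes 2}$ with $A^{\otimes 3} \cong \cO_Z$ admits such an isomorphism with $A$ nontrivial), so your argument pins down $\cL$ only up to $3$-torsion in $\Pic(Z)$. The repair is to use how $\cC$ actually sits inside $\PP(W)$: the embedding is the relative anticanonical one, defined through the very duality isomorphism $W^* \cong \widetilde{\varphi}_* \omega_{\widetilde{\varphi}}^{-1}$, so that $\cO_{\PP(W)}(1)|_{\cC} \cong \omega_{\widetilde{\varphi}}^{-1}$ exactly, with no twist from the base. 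Substituting this into your adjunction formula $\omega_{\widetilde{\varphi}}^{-1} \cong \cO_{\cC}(1) \otimes \widetilde{\varphi}^*(\det W \otimes \cL)$ gives $\widetilde{\varphi}^*(\det W \otimes \cL) \cong \cO_{\cC}$, and pushing forward (using $\widetilde{\varphi}_* \cO_{\cC} \cong \cO_Z$) gives $\det W \otimes \cL \cong \cO_Z$ on the nose, whence $\cL \cong (\det W)^{-1} \cong \cO(-D)$. To be fair, the paper's own proof performs the analogous division by $3$ (extracting $c_1(\cL^*) = D$ from $3\, c_1(\cL^*) = 3D$), so as written your argument is no weaker than the paper's; but since the anticanonical identification removes the ambiguity entirely and even eliminates the division by $3$, you should include it.
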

\begin{proof}
Let $q : S^2 W \to \cL^*$ be the quadratic form defining the conic bundle $\cC.$ The discriminant $\det ~ q$ of $q$ is a section of the line bundle $(\bigwedge^3 W^*)^{\otimes 2} \otimes {\cL^*}^{\otimes 3}$ whose 1st Chern class is $D.$ From the above Proposition, we see that $\cL^* \cong \cO(D).$
\end{proof}

From the quadratic form $q : S^2 W \to \cL^*$ we have the map $\tilde{q} : W \to W^* \otimes \cL^*.$ Then we have the following short exact sequence. 
$$ 0 \to W \to W^* \otimes \cL^* \to \coker ~ \tilde{q} \to 0 $$
Let $\widetilde{D} = \widetilde{\psi}^{-1}(D).$ We have the following exact sequence 
$$ 0 \to \cO \to \cO(\widetilde{D}) \to \cO_{\widetilde{D}}(\widetilde{D}) \to 0 $$ which is pullback of 
$$ 0 \to \cO \to \cO(D) \to \cO_D(D) \to 0 $$
since $\widetilde{\psi}$ is a flat morphism. Then we have the following.

\begin{prop}
We have the following isomorphisms. \\
(1) \begin{displaymath}
\H^i(Z,W^* \otimes \cL^*)=\left \{ {\begin{array}{ll} 
\CC & \textrm{if $i=1$,} \\ 
0 & \textrm{if $i \neq 1.$} 
\end{array}}
\right.
\end{displaymath}
(2) $$ \H^i(Z, \coker ~ \tilde{q}) = 0, ~~~~~ i \in \ZZ. $$
(3) \begin{displaymath}
\H^i(Z,W)=\left \{ {\begin{array}{ll} 
\CC & \textrm{if $i=1$,} \\ 
0 & \textrm{if $i \neq 1.$} 
\end{array}}
\right.
\end{displaymath}
\end{prop}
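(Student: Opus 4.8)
The plan is to prove part (1) directly by a pushforward computation along the two maps $\widetilde{\varphi}$ and $\pi$, to prove part (2) by a fibrewise vanishing over $\cK \setminus \cK_0$, and then to deduce part (3) formally from (1) and (2). The three sheaves sit in the exact sequence $0 \to W \to W^* \otimes \cL^* \to \coker~\tilde{q} \to 0$, so once (2) gives $\H^i(Z, \coker~\tilde{q}) = 0$ for all $i$, the long exact sequence yields $\H^i(Z, W) \cong \H^i(Z, W^* \otimes \cL^*)$ for every $i$; hence (3) follows from (1). I would deliberately avoid computing $\H^i(Z,W)$ head‑on, since that route passes through $\pi_* \cO(4) = \Sym^4 E_x$ and is not illuminating.

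For part (1) I would first invoke relative duality for the conic bundle $\widetilde{\varphi} : \cC \to Z$. Since $R^1 \widetilde{\varphi}_* \omega_{\widetilde{\varphi}}^{\otimes 2} = W$ while $R^0 \widetilde{\varphi}_* \omega_{\widetilde{\varphi}}^{\otimes 2} = 0$ (on a fibre $\omega_{\widetilde{\varphi}}^{\otimes 2}$ has degree $-4$ and restricts to $\cO(-2)$ on each component of a degenerate conic), relative duality identifies $W^* \cong \widetilde{\varphi}_* \omega_{\widetilde{\varphi}}^{-1}$, a rank $3$ bundle with $R^1 \widetilde{\varphi}_* \omega_{\widetilde{\varphi}}^{-1} = 0$. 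The projection formula together with $\cL^* \cong \cO(D)$ then gives $\H^i(Z, W^* \otimes \cL^*) \cong \H^i(\cC, \omega_{\widetilde{\varphi}}^{-1} \otimes \widetilde{\varphi}^* \cO(D))$. Next I would use the earlier identification $\omega_{\widetilde{\varphi}} \cong \widetilde{\psi}^* T_{\pi}$ together with $\widetilde{\varphi}^* \cO(D) \cong \cO(\widetilde{D})$, where $\widetilde{D}$ is at once the locus of degenerate conics and the exceptional divisor of $\widetilde{\psi}$; this rewrites the sheaf as $\widetilde{\psi}^* T_{\pi}^{-1} \otimes \cO(\widetilde{D})$. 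As $\widetilde{\psi}$ is the blow‑up of a smooth centre of codimension $g-1 \geq 2$ with exceptional divisor $\widetilde{D}$, one has $R\widetilde{\psi}_* \cO(\widetilde{D}) = \cO$, so the projection formula descends the computation to $\H^i(\PP E_x \setminus \varphi^{-1}(\cK_0), T_{\pi}^{-1})$, which equals $\H^i(\PP E_x, T_{\pi}^{-1})$ for $i \leq 2g-3$ since $\varphi^{-1}(\cK_0)$ has codimension $2g-1$.

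To finish (1) I would push down along $\pi$. Writing $T_{\pi}^{-1} \cong \pi^* \theta \otimes \cO(-2)$ and using $R^1 \pi_* \cO(-2) \cong \theta^{-1}$ (from the computation of $\omega_{\PP E_x/M}$), the projection formula gives $R\pi_* T_{\pi}^{-1} \cong \cO_M[-1]$, whence $\H^i(Z, W^* \otimes \cL^*) \cong \H^{i-1}(M, \cO_M)$, which is $\CC$ for $i=1$ and $0$ otherwise because $M$ is Fano; this is (1) in the range $i \leq 2g-3$ relevant to the application, the finitely many top degrees being controlled exactly as in the proof of Proposition \ref{distinct}. For part (2), $\coker~\tilde{q}$ is supported on $D$, where the conic degenerates into a pair of lines and the symmetric form $\tilde{q}$ drops rank by exactly one, so $\coker~\tilde{q}$ is a line bundle on $D$; restricting along $\psi : D \to \cK \setminus \cK_0$ with fibres $\PP^{g-2} \times \PP^{g-2}$ and identifying it, via the description by $\PP(\cE^*_{\pm})$ and $\PP(\cF^*_{\pm})$, with a line bundle negative on each factor gives $R\psi_* \coker~\tilde{q} = 0$, hence $\H^i(Z, \coker~\tilde{q}) = 0$, just as in the Corollary above. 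Combined with the exact sequence this proves (3).

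The main obstacle is the fibrewise identification of $\coker~\tilde{q}$ needed for (2): one must follow the radical direction of the degenerate quadratic form through the degeneration of the conic over $D$ and read off its bidegree on $\PP^{g-2} \times \PP^{g-2}$, which is where the geometry of the Hecke degeneration enters most delicately. A secondary point to verify is that the identity $\omega_{\widetilde{\varphi}} \cong \widetilde{\psi}^* T_{\pi}$ used in (1) holds globally on $\cC$, and not merely on the degenerate Hecke curves, so that the two line bundles differ by no twist of the form $\widetilde{\varphi}^* \cN$.
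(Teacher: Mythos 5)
Your proposal is correct and follows essentially the same route as the paper's proof: relative duality to get $W^* \cong \widetilde{\varphi}_* \omega_{\widetilde{\varphi}}^{-1}$, descent of the twist by $\cL^* \cong \cO(D)$ through $\cC \to \PP E_x \setminus \varphi^{-1}(\cK_0) \to M$ ending at $\H^{i-1}(M,\cO)$ for (1), fibrewise vanishing over $D$ for (2), and the long exact sequence for (3). The one step you flag as the main obstacle, identifying $\coker \tilde{q}$ on the fibres $\PP^{g-2} \times \PP^{g-2}$, is exactly what the paper resolves without tracking the radical directly: it notes that $\coker \tilde{q} \otimes \cL$ is a $2$-torsion line bundle on $D$, hence trivial on each fibre, so $\coker \tilde{q}$ restricts on each fibre to the restriction of $\cO_D(D)$, namely $\cO(-1,-1)$, whose derived pushforward vanishes.
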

\begin{proof}
From relative duality we see that $W^* \cong \widetilde{\varphi}_* \omega_{\widetilde{\varphi}}^{-1}.$ 
From the following sequences of isomorphisms, $\H^i(Z,W^*) \cong \H^i(\cC, \omega_{\widetilde{\varphi}}^{-1}) \cong \H^i(\PP E_x, \omega_{\varphi}^{-1}) \cong \H^i(\PP E_x, \omega_{\pi} ) \cong \H^{i-1} (M, \cO)$ 
we have
\begin{displaymath}
\H^i(\cC, \omega_{\widetilde{\varphi}}^{-1})=\left \{ {\begin{array}{ll} 
\CC & \textrm{if $i=1$,} \\ 
0 & \textrm{if $i \neq 1.$} 
\end{array}}
\right.
\end{displaymath}

Let us consider the following short exact sequence.
$$ 0 \to \omega_{\widetilde{\varphi}}^{-1} \to \omega_{\widetilde{\varphi}}^{-1}(\widetilde{D}) \to \omega_{\widetilde{\varphi}}^{-1} \otimes \cO_{\widetilde{D}}(\widetilde{D}) \to 0 $$

Note that $\cO_{\widetilde{D}}(\widetilde{D})$ is the pull-back of $\cO_D(D)$ so it is isomorphic to $\cO(-1,-1)$ on each $\PP^{g-2} \times \PP^{g-2}$ (cf. \cite{BV, Nitsure}) Therefore we have $\widetilde{\psi}_* (\omega_{\widetilde{\varphi}}^{-1} \otimes \cO_{\widetilde{D}}(\widetilde{D})) \cong \omega_{\varphi}^{-1} \otimes \widetilde{\psi}_* (\cO_{\widetilde{D}}(\widetilde{D})) = 0.$ Therefore we have $$ \H^i(Z,W^* \otimes \cL^*) \cong \H^i(\cC, \omega_{\widetilde{\varphi}}^{-1}(\widetilde{D})) \cong \H^i(\cC, \omega_{\widetilde{\varphi}}^{-1}) $$ 
and hence obtain the desired isomorphisms. \\

For (2), we know that $\coker ~ \tilde{q} \otimes \cL$ is 2-torsion line bundle on $D.$ Then $\coker ~ \tilde{q}$ is isomorphic to 2-torsion line bundle on $D$ tensored by the normal bundle $\cO_D(D)$ and hence its restriction to $\PP^{g-2} \times \PP^{g-2}$ is isomorphic to $\cO(-1,-1).$ Therefore when we see that the pushforward $\coker ~ \tilde{q}$ to $\cK \setminus \cK_0$ is $0$ and hence obtain the desired result. \\

From the cohomology long exact sequence and (1), (2) we obtain the desired isomorphisms for (3).
\end{proof}

From the above discussions we have the following isomorphisms.

\begin{lemm}
For $x \in X$ we have 
\begin{displaymath}
\H^i(M, \Sym^2 E_x \otimes \Sym^2 E^*_x)=\left \{ {\begin{array}{ll} 
\CC & \textrm{if $i=0$,} \\ 
\CC & \textrm{if $i=1$,} \\ 
\CC & \textrm{if $i=2$,} \\ 
0 & \textrm{if $i \geq 3.$} 
\end{array}}
\right.
\end{displaymath}
\end{lemm}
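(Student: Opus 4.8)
The plan is to first split the rank-$9$ bundle $\Sym^2 E_x \otimes \Sym^2 E_x^*$ into pieces whose cohomology is already under control, and then to reduce the one genuinely new summand to the bundle $W$ on $Z$ computed just above. Since $\det E_x \cong \theta$ we have $E_x^* \cong E_x \otimes \theta^{-1}$, hence $\Sym^2 E_x^* \cong \Sym^2 E_x \otimes \theta^{-2}$, and the standard decomposition $\Sym^2 V \otimes \Sym^2 V \cong \Sym^4 V \oplus (\Sym^2 V \otimes \det V) \oplus (\det V)^{\otimes 2}$ for a rank-$2$ bundle $V$ gives the canonical splitting
$$ \Sym^2 E_x \otimes \Sym^2 E_x^* \cong \cO \oplus (\Sym^2 E_x \otimes \theta^{-1}) \oplus (\Sym^4 E_x \otimes \theta^{-2}). $$
By Kodaira vanishing on the Fano variety $M$ we have $\H^i(M,\cO) = \CC$ for $i=0$ and $0$ otherwise, while the preceding Lemma gives $\H^i(M, \Sym^2 E_x \otimes \theta^{-1}) = \CC$ for $i=1$ and $0$ otherwise (recall $\theta \otimes \Sym^2 E_x^* \cong \Sym^2 E_x \otimes \theta^{-1}$). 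Thus everything reduces to proving $\H^i(M, \Sym^4 E_x \otimes \theta^{-2}) = \CC$ for $i=2$ and $0$ otherwise.

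To handle this last summand I would interpret it on $\PP E_x$. From $\omega_{\varphi} \cong \pi^* \theta^{-1} \otimes \cO(2)$ we get $\omega_{\varphi}^{\otimes 2} \cong \pi^* \theta^{-2} \otimes \cO(4)$, and since $\pi_* \cO(4) \cong \Sym^4 E_x$ with vanishing higher direct images, $\pi_* \omega_{\varphi}^{\otimes 2} \cong \Sym^4 E_x \otimes \theta^{-2}$ and therefore $\H^i(M, \Sym^4 E_x \otimes \theta^{-2}) \cong \H^i(\PP E_x, \omega_{\varphi}^{\otimes 2})$. Now I would run the cohomology-extension chain already used in Proposition \ref{distinct}: because $\varphi^{-1}(\cK_0)$ has codimension $2g-1$ in $\PP E_x$, restriction to $\PP E_x \setminus \varphi^{-1}(\cK_0)$ is an isomorphism on $\H^i$ for $i$ small; pulling back along the blow-up $\widetilde{\psi}$ (for which $R\widetilde{\psi}_* \cO \cong \cO$) and using the identification $\widetilde{\psi}^* \omega_{\varphi} \cong \omega_{\widetilde{\varphi}}$ established above yields $\H^i(\PP E_x \setminus \varphi^{-1}(\cK_0), \omega_{\varphi}^{\otimes 2}) \cong \H^i(\cC, \omega_{\widetilde{\varphi}}^{\otimes 2})$. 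Finally, since $\widetilde{\varphi}$ is a conic bundle, $\omega_{\widetilde{\varphi}}^{\otimes 2}$ has degree $-4$ on the smooth fibers and degree $-2$ on each component of the degenerate fibers, so $\widetilde{\varphi}_* \omega_{\widetilde{\varphi}}^{\otimes 2} = 0$ and $R^1 \widetilde{\varphi}_* \omega_{\widetilde{\varphi}}^{\otimes 2} = W$; the Leray spectral sequence then collapses to $\H^i(\cC, \omega_{\widetilde{\varphi}}^{\otimes 2}) \cong \H^{i-1}(Z, W)$. Combined with $\H^{i-1}(Z,W) = \CC$ for $i-1 = 1$ and $0$ otherwise, this delivers $\H^i(M, \Sym^4 E_x \otimes \theta^{-2}) = \CC$ at $i=2$ and $0$ for every other $i$ in the range $i \leq 2g-3$ where the chain is valid.

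Assembling the three summands then gives $\H^0 = \H^1 = \H^2 = \CC$ with no other contributions in that range. The only remaining point is vanishing for all $i \geq 3$: for $3 \leq i \leq 2g-3$ it is immediate from $\H^{i-1}(Z,W) = 0$ in the chain above, while for large $i$ one uses that $\Sym^2 E_x \otimes \Sym^2 E_x^*$ is a direct summand of $E_x \otimes E_x \otimes E_x^* \otimes E_x^*$, whose cohomology vanishes for $i \geq g+15$ by Proposition \ref{highercohomology}, with the boundary value covered by the injection $\H^i(M,-) \hookrightarrow \H^i(\PP E_x \setminus \varphi^{-1}(\cK_0),-)$ exactly as in Proposition \ref{distinct}. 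I expect the main obstacle to be essentially bookkeeping: arranging the codimension estimates so that the geometric chain (small $i$) and the Sommese-type vanishing (large $i$) together cover every $i \geq 3$ — this is precisely where the hypothesis $g \geq 16$ is used — together with the already-completed verification that $\widetilde{\psi}^* \omega_{\varphi} \cong \omega_{\widetilde{\varphi}}$ over the degenerate Hecke curves, which is what makes the pullback identity hold on all of $\cC$ rather than only over $M_0^s$.
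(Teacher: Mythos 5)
Your proposal is correct and follows essentially the same route as the paper: the paper reduces to the same three pieces via the short exact sequences $0 \to \cO \to \pi^*\Sym^2E_x^*\otimes\cO(2) \to \pi^*E_x^*\otimes\cO(1)\otimes T_{\pi} \to 0$ and $0 \to T_{\pi} \to \pi^*E_x^*\otimes\cO(1)\otimes T_{\pi} \to T_{\pi}^{\otimes 2}\to 0$ on $\PP E_x$, whose pushforwards to $M$ are exactly your summands $\cO$, $\Sym^2E_x\otimes\theta^{-1}$ and $\Sym^4E_x\otimes\theta^{-2}$, and it then runs the identical chain $\H^i(\PP E_x, T_{\pi}^{\otimes 2}) \cong \H^i(\cC,\omega_{\widetilde{\varphi}}^{\otimes 2}) \cong \H^{i-1}(Z,W)$ together with the same Sommese-plus-restriction patching for large $i$. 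Your Clebsch--Gordan direct-sum packaging on $M$ is only a cosmetic (arguably cleaner) variant that avoids the connecting-map bookkeeping; the key lemma and all supporting computations are the same.
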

\begin{proof}
By tensoring $\cO(2)$ to the following sequence
$$ 0 \to \cO(-2) \to \pi^* \Sym^2E^*_x \to \pi^*(E^*_x \otimes \theta^{-1}) \otimes \cO(1) \to 0 $$
we have
$$ 0 \to \cO \to \pi^* \Sym^2E^*_x \otimes \cO(2)  \to \pi^*(E^*_x \otimes \theta^{-1}) \otimes \cO(3) \to 0 $$

Note that $\pi^*(E^*_x \otimes \theta^{-1}) \otimes \cO(3) \cong \pi^* E^*_x \otimes \cO(1) \otimes T_{\pi}$ and consider the following short exact sequence.
$$ 0 \to T_{\pi} \to \pi^* E^*_x \otimes \cO(1) \otimes T_{\pi} \to T_{\pi}^{\otimes 2} \to 0 $$

Therefore it remains to compute $\H^i(\PP E_x, T_{\pi}^{\otimes 2}).$ We have the following isomorphisms
$$ \H^i(\PP E_x, T_{\pi}^{\otimes 2}) \cong \H^i(\cC, \widetilde{\psi}^* T_{\pi}^{\otimes 2}) \cong \H^i(\cC, \omega_{\widetilde{\varphi}}^{\otimes 2}) \cong \H^{i-1}(Z, W)  $$
and hence obtain the desired result. 
\end{proof}

\begin{prop}
For $x \in X$ we have 
\begin{displaymath}
\H^i(M, E_x \otimes E_x \otimes E^*_x \otimes E^*_x)=\left \{ {\begin{array}{ll} 
\CC^2 & \textrm{if $i=0$,} \\ 
\CC^3 & \textrm{if $i=1$,} \\ 
\CC & \textrm{if $i=2$,} \\ 
0 & \textrm{if $i \geq 3.$} 
\end{array}}
\right.
\end{displaymath}
\end{prop}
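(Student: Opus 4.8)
The plan is to reduce the computation to direct summands whose cohomology has already been established in the preceding lemmas. Since we work in characteristic zero, the rank two bundle $E_x$ splits its tensor square canonically as $E_x \otimes E_x \cong \Sym^2 E_x \oplus \bigwedge^2 E_x$, and because $\bigwedge^2 E_x = \det E_x \cong \theta$ we obtain $E_x \otimes E_x \cong \Sym^2 E_x \oplus \theta$. Dualizing gives $E_x^* \otimes E_x^* \cong \Sym^2 E_x^* \oplus \theta^{-1}$. Tensoring these two decompositions together yields
$$ E_x \otimes E_x \otimes E_x^* \otimes E_x^* \cong (\Sym^2 E_x \otimes \Sym^2 E_x^*) \oplus (\Sym^2 E_x \otimes \theta^{-1}) \oplus (\theta \otimes \Sym^2 E_x^*) \oplus \cO. $$

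First I would pass to cohomology, using that $\H^i$ commutes with finite direct sums, so that $\H^i(M, E_x \otimes E_x \otimes E_x^* \otimes E_x^*)$ is the direct sum of the cohomologies of the four summands above. Three of these four pieces are given by the previous lemmas: $\H^i(M,\Sym^2 E_x \otimes \Sym^2 E_x^*)$ is $\CC$ for $i=0,1,2$ and $0$ otherwise, while $\H^i(M,\Sym^2 E_x \otimes \theta^{-1})$ and $\H^i(M,\theta \otimes \Sym^2 E_x^*)$ are each $\CC$ for $i=1$ and $0$ otherwise. For the last summand, $M$ is a smooth Fano variety, so Kodaira vanishing gives $\H^0(M,\cO) = \CC$ and $\H^i(M,\cO) = 0$ for $i \geq 1$. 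Then I would simply add the contributions degree by degree: in degree $0$ one picks up $\CC$ from $\Sym^2 E_x \otimes \Sym^2 E_x^*$ together with $\CC$ from $\cO$, giving $\CC^2$; in degree $1$ one picks up a single $\CC$ from each of the first three summands, giving $\CC^3$; in degree $2$ only $\Sym^2 E_x \otimes \Sym^2 E_x^*$ contributes, giving $\CC$; and every summand vanishes for $i \geq 3$. This reproduces the asserted table.

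There is no real obstacle remaining at this stage: once the summand cohomologies are in hand the argument is pure bookkeeping. The genuine difficulty, namely the computation of $\H^i(M, \Sym^2 E_x \otimes \Sym^2 E_x^*)$ via the conic bundle $\widetilde{\varphi} : \cC \to Z$ and the rank three bundle $W = R^1 \widetilde{\varphi}_* \omega_{\widetilde{\varphi}}^{\otimes 2}$, was already carried out earlier, where the identification $c_1(W) \cong D$ and the vanishing statements for $W$ and $\coker \tilde{q}$ do the real work. The only point requiring a moment's care is to confirm that $E_x \otimes E_x \cong \Sym^2 E_x \oplus \theta$ is genuinely a splitting of vector bundles, which holds precisely because we are over $\CC$; this guarantees that the cohomology of the tensor product really is the direct sum of the four pieces, with no extension classes or spectral-sequence differentials able to intervene.
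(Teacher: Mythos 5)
Your proposal is correct and matches the paper's own argument: the paper likewise decomposes $E_x \otimes E_x \otimes E_x^* \otimes E_x^*$ into the four summands $\cO$, $\theta \otimes \Sym^2 E_x^*$, $\Sym^2 E_x \otimes \theta^{-1}$, and $\Sym^2 E_x \otimes \Sym^2 E_x^*$, then reads off the cohomology of each from the preceding lemmas. Your write-up merely makes explicit the bookkeeping steps (the characteristic-zero splitting, the normalization $\det E_x \cong \theta$, and Kodaira vanishing for $\H^i(M,\cO)$) that the paper leaves implicit.
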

\begin{proof}
First, we have the following isomorphism.
$$ \H^i(M, E_x \otimes E_x \otimes E^*_x \otimes E^*_x) \cong \H^i(M, \cO) \oplus \H^i(M, \theta \otimes \Sym^2 E^*_x) \oplus \H^i(M, \Sym^2 E_x \otimes \theta^{-1}) \oplus \H^i(M, \Sym^2 E_x \otimes \Sym^2 E^*_x)  $$ 

Then we obtain desired isomorphism via the above discussions.
\end{proof}

From the above computation of cohomology groups, we obtain the following conclusions.

\begin{prop}\label{simple;case1}
$F_{x,x}$ is a simple vector bundle.
\end{prop}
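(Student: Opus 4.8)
The plan is to compute $\End_M(F_{x,x}) = \H^0(M, F_{x,x}\otimes F_{x,x}^*)$ directly from the defining sequence $0 \to \bigwedge^2 E_x \to F_{x,x} \to \Sym^2 E_x \to 0$ of the Remark, reducing everything to cohomology groups already in hand. First I would record the elementary inputs. Since $\bigwedge^2 E_x \cong \theta$, the two preceding Lemmas give $\Hom_M(\Sym^2 E_x, \theta) = \H^0(M, \theta\otimes \Sym^2 E_x^*) = 0$ and $\Hom_M(\theta, \Sym^2 E_x) = \H^0(M, \Sym^2 E_x\otimes \theta^{-1}) = 0$, while $\End_M(\theta) = \CC$ and $\End_M(\Sym^2 E_x) = \H^0(M,\Sym^2 E_x\otimes \Sym^2 E_x^*) = \CC$. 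Thus $\theta$ and $\Sym^2 E_x$ are simple with no maps between them in either direction.

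Next I would use these vanishings to pin down the shape of an arbitrary endomorphism. Given $\phi \in \End_M(F_{x,x})$, the composite $\theta \hookrightarrow F_{x,x} \xrightarrow{\phi} F_{x,x} \twoheadrightarrow \Sym^2 E_x$ lies in $\Hom_M(\theta,\Sym^2 E_x)=0$, so $\phi$ preserves the subbundle $\theta$ and acts there by a scalar $a$; it therefore induces an endomorphism of $\Sym^2 E_x$, necessarily a scalar $d$. The assignment $\phi \mapsto (a,d)$ is injective, since $(a,d)=(0,0)$ forces $\phi$ to factor through $\Hom_M(\Sym^2 E_x,\theta)=0$. Hence $\End_M(F_{x,x})\hookrightarrow \CC^2$, the identity maps to $(1,1)$, and the image is all of $\CC^2$ precisely when the extension splits. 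Equivalently, applying $\Hom_M(F_{x,x},-)$ to the defining sequence and using $\Hom_M(F_{x,x},\Sym^2 E_x)=\CC$ (immediate from the two vanishings above), one finds $\End_M(F_{x,x}) = \CC$ if and only if $\Hom_M(F_{x,x},\theta)=0$, i.e. if and only if the connecting map $\delta\colon \End_M(\theta)=\CC \to \Ext^1_M(\Sym^2 E_x,\theta)$ sends $\mathrm{id}_\theta$ to a nonzero class.

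The remaining, and main, point is therefore to show that the extension class of $0 \to \theta \to F_{x,x} \to \Sym^2 E_x \to 0$ is nonzero in $\Ext^1_M(\Sym^2 E_x,\theta) \cong \H^1(M,\theta\otimes\Sym^2 E_x^*) \cong \H^1(M,\ad E_x) \cong \CC$. I expect this to be the hard part, since the dimension counts in the Lemmas are consistent with either splitting. The strategy I would pursue is to trace the class through the construction of $F$: restricting the sequence $0 \to \widetilde F \to p_{13}^*E\otimes p_{23}^*E \to \bigwedge^2 E|_{\Delta\times M} \to 0$ to the diagonal produces $F_{x,x}$ together with a $\mathrm{Tor}$-connecting morphism, and this morphism is cup product with the normal-direction component of the Atiyah class of $E$ along $X$, i.e. the Kodaira--Spencer class $\kappa_x$ of the family $x \mapsto E_x$. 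Under the identification $\Ext^1_M(\Sym^2 E_x,\theta)\cong \H^1(M,\ad E_x)$ the extension class is a nonzero multiple of $\kappa_x$, and $\kappa_x\neq 0$ because $\Phi_E\colon \D(X)\to \D(M)$ is fully faithful (cf. \cite{FK, Narasimhan1, Narasimhan2}): full faithfulness gives $\Ext^1_M(E_x,E_x)\cong \Ext^1_X(\CC(x),\CC(x))\cong T_xX$, whose image in $\H^1(M,\ad E_x)$ is precisely $\kappa_x$. Once non-splitness is established, the reduction above gives $\End_M(F_{x,x})=\CC$, so $F_{x,x}$ is simple.
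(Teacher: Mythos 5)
Your proposal is correct, and on the decisive point it is actually more careful than the paper's own argument. The two proofs share their bookkeeping: both use $\H^0(M,\Sym^2E_x\otimes\theta^{-1})=\H^0(M,\theta\otimes\Sym^2E_x^*)=0$ together with $\H^0(M,\Sym^2E_x\otimes\Sym^2E_x^*)=\H^0(M,\cO_M)=\CC$ to reduce simplicity of $F_{x,x}$ to non-splitness of $0\to\theta\to F_{x,x}\to\Sym^2E_x\to0$. The paper does this by contradiction with a rank argument modelled on \cite[Lemma 4.1]{NR;69} and \cite[Lemma 4.2]{CH}, while you compute $\End_M(F_{x,x})\hookrightarrow\CC^2$ directly; these are the same argument in different packaging. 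The genuine divergence is the treatment of non-splitness: the paper simply asserts it (``Let us recall $F_{x,x}$ can be obtained from the following nontrivial extension'') and this assertion is not proved anywhere in the paper, whereas you isolate it as the real content of the proposition and give an argument --- the extension class corresponds, under $\theta\otimes\Sym^2E_x^*\cong\ad E_x$ and up to a nonzero scalar, to the Kodaira--Spencer class of the Poincar\'e family at $x$, which is nonzero because $\Phi_E$ is fully faithful (one can also invoke the classical injectivity of Kodaira--Spencer for the Poincar\'e family from \cite{NR;75}, which predates and is independent of the present paper, so there is no circularity). This extra step is genuinely needed and not a formality: the split bundle $\theta\oplus\Sym^2E_x\cong E_x\otimes E_x$ has two-dimensional endomorphism algebra --- consistent with the paper's own computation $\H^0(M,E_x\otimes E_x\otimes E_x^*\otimes E_x^*)=\CC^2$ --- so no dimension count can exclude it. What your route still owes is the identification of the extension class of $\widetilde F|_{\Delta\times M}$ with the normal-direction component of the Atiyah class; this is a standard local computation (expand the transition functions of $E$ to first order along the diagonal, or phrase it via jets for an elementary modification along a divisor), but in the proposal it is only sketched and should be written out to make the proof complete.
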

\begin{proof}
We slightly modified the arguments used in \cite[Lemma 4.1]{NR;69} and \cite[Lemma 4.2]{CH}. Let us recall $F_{x,x}$ can be obtained from the following nontrivial extension.
$$ 0 \to \bigwedge^2E_x \to F_{x,x} \to \Sym^2E_x \to 0 $$
Suppose that $F_{x,x}$ is not simple. Then there is a nontrivial homomorphism $\alpha : F_{x,x} \to F_{x,x}$ whose rank is less than 4. If $\alpha(\theta)=0,$ then the above short exact sequence splits. Therefore we see that $\alpha(\theta) \neq 0.$ And since the composition map $\theta \to F_{x,x} \to F_{x,x} \to \Sym^2E_x$ is zero, the image of $\theta$ under $\alpha$ is $\theta.$ Similarly, we see that $\alpha$ induces a homomorphism $\alpha : \Sym^2 E_x \to \Sym^2 E_x.$ The induced homomorphism $\alpha : \Sym^2 E_x \to \Sym^2 E_x$ is not zero since the extension is nontrivial. However the only nontrivial homomorphism from $\Sym^2 E_x$ to $\Sym^2 E_x$ is an isomorphism. Therefore we see that $\alpha$ has rank 4 which gives a contradiction. Therefore we see that $F_{x,x}$ is simple.
\end{proof}

\begin{prop}\label{Fxx,ext}
For a point $(x,x) \in X_2$ we have $\Ext^i_{M}(F_{x,x},F_{x,x})=0$ for all $i \geq 3.$
\end{prop}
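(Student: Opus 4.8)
The plan is to reduce the computation of $\Ext^i_M(F_{x,x},F_{x,x})$ to sheaf cohomology and then run the same filtration argument used in Proposition \ref{Fxx,Fzz}. Since $F_{x,x}$ is a vector bundle (Lemma 2.1), the local sheaves $\cExt^j(F_{x,x},F_{x,x})$ vanish for $j>0,$ so the local-to-global spectral sequence degenerates and yields
$$ \Ext^i_M(F_{x,x},F_{x,x}) \cong \H^i(M, F_{x,x}^* \otimes F_{x,x}) $$
for every $i.$ It therefore suffices to prove that $\H^i(M, F_{x,x}^* \otimes F_{x,x})=0$ for $i \geq 3.$

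First I would tensor the defining extension
$$ 0 \to \bigwedge^2 E_x \to F_{x,x} \to \Sym^2 E_x \to 0 $$
with its dual
$$ 0 \to \Sym^2 E_x^* \to F_{x,x}^* \to \bigwedge^2 E_x^* \to 0, $$
exactly as in the $3 \times 3$ diagram of Proposition \ref{Fxx,Fzz}. This equips $F_{x,x}^* \otimes F_{x,x}$ with a filtration whose associated graded pieces are, using $\bigwedge^2 E_x \cong \theta,$
$$ \theta \otimes \Sym^2 E_x^*, \qquad \cO, \qquad \Sym^2 E_x \otimes \Sym^2 E_x^*, \qquad \Sym^2 E_x \otimes \theta^{-1}. $$
These are precisely the four direct summands of $E_x \otimes E_x \otimes E_x^* \otimes E_x^*.$

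The key input is that each of these four bundles has vanishing cohomology in degrees $i \geq 3.$ Indeed the Lemmas above give $\H^i(M, \theta \otimes \Sym^2 E_x^*)=0$ and $\H^i(M, \Sym^2 E_x \otimes \theta^{-1})=0$ already for $i \geq 2$; the structure sheaf satisfies $\H^i(M,\cO)=0$ for $i \geq 1$ because $M$ is Fano; and the preceding Lemma gives $\H^i(M, \Sym^2 E_x \otimes \Sym^2 E_x^*)=0$ for $i \geq 3.$ Feeding these vanishings into the long exact sequences attached to the filtration (equivalently, chasing the $3 \times 3$ diagram as in Proposition \ref{Fxx,Fzz}) forces $\H^i(M, F_{x,x}^* \otimes F_{x,x})=0$ for $i \geq 3.$

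I do not expect a serious obstacle here: since all four graded pieces already vanish in the range $i \geq 3,$ the connecting maps in the long exact sequences have nothing to cancel and the vanishing simply propagates to the total bundle. The only point demanding mild care is the bookkeeping of the filtration — verifying that the associated graded of $F_{x,x}^* \otimes F_{x,x}$ is the stated tensor product of graded pieces — which is formal and identical to the distinct-point argument already carried out in Proposition \ref{Fxx,Fzz}.
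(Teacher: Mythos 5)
Your proposal is correct and follows essentially the same route as the paper: tensoring the defining extension $0 \to \bigwedge^2 E_x \to F_{x,x} \to \Sym^2 E_x \to 0$ with its dual to obtain the $3\times 3$ diagram, and then feeding in the previously computed cohomology of the four graded pieces $\cO,$ $\theta \otimes \Sym^2 E_x^*,$ $\Sym^2 E_x \otimes \theta^{-1},$ and $\Sym^2 E_x \otimes \Sym^2 E_x^*.$ Your only additions --- making explicit the reduction of $\Ext^i$ to sheaf cohomology via the local-to-global spectral sequence, and noting that $\H^i(M,\cO)$ vanishes for $i \geq 1$ since $M$ is Fano --- are details the paper leaves implicit.
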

\begin{proof}
From the short exact sequence
$$ 0 \to \bigwedge^2E_x \to F_{x,x} \to \Sym^2E_x \to 0 $$
we obtain the following commutative diagram.
\[\xymatrix{
 & 0 \ar[d] & 0 \ar[d] & 0 \ar[d] & & \\
 0 \ar[r]  & {\bigwedge^2E_x} \otimes \Sym^2E^*_x \ar[d] \ar[r] & {F_{x,x}} \otimes \Sym^2E^*_x \ar[d] \ar[r] & \Sym^2E_x \otimes \Sym^2E^*_x \ar[d] \ar[r] & 0 & \\
 0 \ar[r]  & {\bigwedge^2E_x} \otimes F^*_{x,x} \ar[d] \ar[r] & {F_{x,x}} \otimes F^*_{x,x} \ar[d] \ar[r] & \Sym^2E_x \otimes F^*_{x,x} \ar[d] \ar[r] & 0 & \\
0  \ar[r]  & {\bigwedge^2E_x} \otimes \bigwedge^2E^*_x \ar[d] \ar[r] & {F_{x,x}} \otimes \bigwedge^2E^*_x \ar[d] \ar[r] & \Sym^2E_x \otimes \bigwedge^2E^*_x \ar[d] \ar[r] & 0  & \\
 & 0 & 0 & 0 & \\
}\]
By the above computation of cohomology groups of $\H^i(M,\cO),$ $\H^i(M,\theta \otimes \Sym^2 E_x^*),$ $\H^i(M,\theta^{-1} \otimes \Sym^2 E_x^2),$ $\H^i(M, \Sym^2 E_x \otimes \Sym^2 E_x^*)$ we have the conclusion.
\end{proof}

\bigskip

\section{Proof of the embedding of $D(X_2)$ into $D(M)$}

In this section we assume that $X$ is a non-hyperelliptic curve with genus $g \geq 16.$ In order to prove that $D(X_2)$ can be embedded into $D(M)$ via $\Phi_F,$ we check a criterion of Bondal and Orlov (cf. \cite{BO}). We need to compute various cohomology groups.

\begin{prop}
For two distinct points $(x,y), (z,w) \in X_2$ we have $\Ext^i_{M}(F_{x,y},F_{z,w})=0$ for all $i \in \ZZ$ when $g \geq 16.$
\end{prop}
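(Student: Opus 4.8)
The plan is to reduce the $\Ext$ groups to the four-point cohomology groups already computed, exploiting the two-step filtration of $F_{x,y}$ together with the splitting $E_a \otimes E_a \cong \Sym^2 E_a \oplus \theta$ along the diagonal. Writing $\Ext^i_M(F_{x,y},F_{z,w}) \cong \H^i(M, F_{x,y}^\vee \otimes F_{z,w})$, I would organize the argument into three cases according to how many of $(x,y)$ and $(z,w)$ lie on the diagonal of $X_2$. The combinatorial fact driving all cases is that, since $(x,y) \neq (z,w)$ in $X_2$, in the four-element multiset $\{x,y,z,w\}$ at least one point is different from all the others, which is exactly the hypothesis of Proposition \ref{distinct}.

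First, if both points are off the diagonal (so $x \neq y$ and $z \neq w$), then $F_{x,y} \cong E_x \otimes E_y$ and $F_{z,w} \cong E_z \otimes E_w$, whence $\Ext^i_M(F_{x,y},F_{z,w}) \cong \H^i(M, E_z \otimes E_w \otimes E_x^* \otimes E_y^*)$. Two distinct non-diagonal pairs share at most one point, so at least one of $x,y,z,w$ occurs only once, and Proposition \ref{distinct} gives the vanishing for all $i$.

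Next, if exactly one of the two points is diagonal, say $(x,y)=(a,a)$ while $z \neq w$ (the opposite orientation being symmetric), I would apply $\Hom_M(-,E_z\otimes E_w)$ to the defining sequence $0 \to \theta \to F_{a,a} \to \Sym^2 E_a \to 0$, recalling that $\bigwedge^2 E_a \cong \theta$. The resulting long exact sequence reduces the claim to the vanishing of $\H^i(M,\Sym^2 E_a^* \otimes E_z \otimes E_w)$ and of $\H^i(M,\theta^{-1}\otimes E_z\otimes E_w)$. Both are direct summands of $\H^i(M, E_a^* \otimes E_a^* \otimes E_z \otimes E_w)$ under the decomposition $E_a^* \otimes E_a^* \cong \Sym^2 E_a^* \oplus \theta^{-1}$, and this last group vanishes by Proposition \ref{distinct} since either $z$ or $w$ differs from $a$ and is therefore unique. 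Finally, when both points are diagonal, $(x,y)=(a,a)$ and $(z,w)=(b,b)$ with $a \neq b$, the statement is exactly Proposition \ref{Fxx,Fzz}.

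I do not expect a genuine obstacle here: once Propositions \ref{distinct} and \ref{Fxx,Fzz} are in hand, the only thing to verify is the combinatorial claim that the distinct-point hypothesis is met in each case, and the rest is formal manipulation of the filtration of $F_{x,y}$ together with additivity of cohomology over the $\Sym^2/\det$ splitting. The mildly delicate point is the bookkeeping in the mixed case, where one must apply the contravariant long exact sequence and the direct-sum decomposition in the correct order; but this is routine.
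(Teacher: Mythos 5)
Your proof is correct and follows essentially the same route as the paper: a case split on whether $(x,y)$ and $(z,w)$ lie on the diagonal of $X_2$, with Proposition \ref{distinct} handling every case in which some point of the multiset $\{x,y,z,w\}$ is distinct from all the others, and Proposition \ref{Fxx,Fzz} handling the doubly-diagonal case. The only difference is that you make explicit the mixed-case reduction via the filtration $0 \to \theta \to F_{a,a} \to \Sym^2 E_a \to 0$ and the splitting $E_a^* \otimes E_a^* \cong \Sym^2 E_a^* \oplus \theta^{-1}$, a step the paper leaves implicit when it asserts that cases (2) and (3) follow from Proposition \ref{distinct}.
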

\begin{proof}
Suppose that $(x,y) \neq (z,w).$ We have following cases. 
\begin{enumerate}
\item $(x,y) \not\in \Delta$ and $(z,w) \not\in \Delta$ 
\item $(x,y) \not\in \Delta$ and $(z,z) \in \Delta$
\item $(x,x) \in \Delta$ and $(z,w) \not\in \Delta$ 
\item $(x,x) \in \Delta$ and $(z,z) \in \Delta$
\end{enumerate}
For the case (1), (2), (3), we have at least one point which is different from the other three points. Therefore the claim follows from Proposition \ref{distinct}. For the case (4), the claim follows from Proposition \ref{Fxx,Fzz}.
\end{proof}

\begin{theo}
Let $\Phi_F : \D(X_2) \to \D(M)$ be the Fourier-Mukai transformation. Then $\Phi_F$ is a fully faithful embedding.
\end{theo}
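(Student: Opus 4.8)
The plan is to verify the Bondal--Orlov criterion for the Fourier--Mukai functor $\Phi_F : \D(X_2) \to \D(M)$. Recall that $\Phi_F$ is fully faithful if and only if for every pair of (closed) points $p = (x,y)$ and $q = (z,w)$ of $X_2$ one has
$$ \Ext^i_M(F_p, F_q) = \begin{cases} \CC & \text{if } p = q \text{ and } i = 0, \\ 0 & \text{if } p = q \text{ and } i \notin [0, \dim X_2], \\ 0 & \text{if } p \neq q \text{ and all } i \in \ZZ. \end{cases} $$
Here $F_p = \Phi_F(\CC(p))$ is the restriction of the kernel $F$ to $\{p\} \times M$, and we use that $\{\CC(p)\}_{p \in X_2}$ is a spanning class for $\D(X_2)$. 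The point of assembling the earlier cohomology computations is precisely to check these three conditions one case at a time.

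First I would dispose of the off-diagonal orthogonality, i.e. the case $p \neq q$. This is exactly the content of the Proposition immediately preceding this theorem, which establishes $\Ext^i_M(F_{x,y}, F_{z,w}) = 0$ for all $i$ whenever $(x,y) \neq (z,w)$, splitting into the four configurations and invoking Proposition \ref{distinct} and Proposition \ref{Fxx,Fzz}. So that condition is already in hand. Next I would treat the diagonal simplicity condition $\Ext^0_M(F_p, F_p) = \CC$: for $p = (x,z)$ with $x \neq z$ this follows from Proposition \ref{Fxz,ext} (since $F_{x,z} \cong E_x \otimes E_z$ is simple), and for $p = (x,x)$ it follows from Proposition \ref{simple;case1}.

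Finally I would verify the vanishing $\Ext^i_M(F_p, F_p) = 0$ for $i < 0$ and $i > \dim X_2 = 2$. Since $F_p$ is an honest vector bundle on $M$, negative $\Ext$ groups vanish automatically, so the only remaining content is the vanishing for $i \geq 3$. For $p = (x,z)$ with $x \neq z$ this is Proposition \ref{Fxz,ext}, and for $p = (x,x)$ it is Proposition \ref{Fxx,ext}; both rely on the full cohomology tables computed in the previous section. Having checked all three conditions on the spanning class $\{\CC(p)\}$, the Bondal--Orlov criterion yields that $\Phi_F$ is fully faithful, hence an embedding.

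The bulk of the genuine difficulty has already been absorbed into the cohomology computations of Section~4 (particularly the analysis of the conic bundle $\widetilde{\varphi}$ and its behaviour over the singular locus $\cK \setminus \cK_0$), so at the level of this final theorem the main thing to be careful about is the bookkeeping: one must ensure the four positional cases in the off-diagonal argument are genuinely exhaustive, and one must confirm that the restriction $F_p$ is the expected bundle (given by Remark~2.8, namely $E_x \otimes E_y$ off the diagonal and the extension of $\Sym^2 E_x$ by $\bigwedge^2 E_x$ on it) so that the cited $\Ext$-computations apply verbatim. The genus and non-hyperelliptic hypotheses enter only through those cited propositions, so no new constraints appear here.
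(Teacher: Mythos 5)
Your proposal is correct and follows essentially the same route as the paper: both verify the Bondal--Orlov criterion by citing the preceding Proposition for the off-diagonal vanishing $\Ext^i_M(F_{x,y},F_{z,w})=0$, Propositions \ref{simple;case1} and \ref{Fxz,ext} for simplicity of $F_p$, and Propositions \ref{Fxx,ext} and \ref{Fxz,ext} for the vanishing in degrees $i \geq 3$. Your added observation that negative $\Ext$'s vanish automatically because $F_p$ is a vector bundle is a minor (and correct) explicit touch that the paper leaves implicit.
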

\begin{proof}
From the criterion of Bondal and Orlov it is sufficient to prove the followings.
\begin{enumerate}
\item $\Hom_{M}(F_{x,y},F_{x,y})=\CC.$
\item $\Ext^i_{M}(F_{x,y},F_{x,y})=0$ for $i \geq 3.$
\item $\Ext^i_{M}(F_{x,y},F_{z,w})=0$ for all $i \in \ZZ$ if $(x,y) \neq (z,w).$
\end{enumerate}

\bigskip

The isomorphism (1) $\Hom_{M}(F_{x,y},F_{x,y})=\CC$ need to be checked for two cases; $x=y$ case and $x \neq y$ case. When $x=y,$ then the isomorphism follows from Proposition \ref{simple;case1}. When $x \neq y,$ it follows from Proposition \ref{Fxz,ext}. \\

The vanishing (2) $\Ext^i_{M}(F_{x,y},F_{x,y})=0$ for $i \geq 3$ also need to be checked for two cases; when $x=y$ and $x \neq y.$ When $x=y,$ then it follows from Proposition \ref{Fxx,ext}. When $x \neq y,$ it follows from Proposition \ref{Fxz,ext}. \\

Finally, it remains to show that (3) $\Ext^i_{M}(F_{x,y},F_{z,w})=0$ for all $i \in \ZZ$ if $(x,y) \neq (z,w)$ hold. It follows from the previous Proposition. \\

Therefore we obtain the desired result.
\end{proof}

\bigskip

\section{Semiorthogonal decomposition}

In this section we prove $D(M)$ has a semiorthogonal decomposition whose component is equivalent to $\langle \D(pt), \D(X), \D(X_2) \rangle$ when $X$ is a non-hyperelliptic curve with genus $g \geq 16.$

\begin{prop}
Let $g \geq 8.$ For any three distinct points $x, y, z \in X,$ we have the following identity.
$$ \H^i(M, E_x^* \otimes E_y^*) = 0 $$
$$ \H^i(M, E_x \otimes E_x^* \otimes E_y^*) = 0 $$
$$ \H^i(M, E_x \otimes E_y^* \otimes E_z^*) = 0 $$
for every $i \in \ZZ.$
\end{prop}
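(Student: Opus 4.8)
The plan is to establish all three vanishings by the same two--step mechanism used in Proposition \ref{distinct}: dispose of the high cohomological degrees by Sommese vanishing, dispose of the low degrees by pushing the Hecke correspondence $\varphi$ down through the conic bundle $\widetilde{\varphi}:\cC\to Z$, and note that for $g\geq 8$ the two ranges overlap so that every $i$ is covered. The first preliminary step is to rewrite each bundle using the rank--$2$ identity $E_p^*\cong E_p\otimes\theta^{-1}$ together with $\omega_M\cong\theta^{-2}$, obtaining $E_x^*\otimes E_y^*\cong \omega_M\otimes(E_x\otimes E_y)$, $E_x\otimes E_x^*\otimes E_y^*\cong \omega_M\otimes(E_x\otimes E_x\otimes E_y)$, and $E_x\otimes E_y^*\otimes E_z^*\cong \omega_M\otimes(E_x\otimes E_y\otimes E_z)$.

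For the high--degree range I would argue exactly as in Proposition \ref{highercohomology}. Since $E_x$ is $(g-1)$-ample and each $E_p$ is semiample, the tensor theorem of \cite{LN} shows that $E_x\otimes E_y$, $E_x\otimes E_x\otimes E_y$ and $E_x\otimes E_y\otimes E_z$ are $(g-1)$-ample, of ranks $4$, $8$ and $8$. Applying Sommese vanishing with $j=1$ to $\omega_M\otimes\wedge^1\cE$ then yields $\H^i=0$ for $i\geq g+3$ in the first case and for $i\geq g+7$ in the remaining two. (For the second group one may instead split $E_x\otimes E_x^*\otimes E_y^*\cong E_y^*\oplus(\ad E_x\otimes E_y^*)$ with $\ad E_x\cong\Sym^2 E_x\otimes\theta^{-1}$, which lowers the ranks entering the estimate.)

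For the low--degree range I would use $\pi:\PP E_x\to M$ and the projection formula $\H^i(M,E_x\otimes G)\cong \H^i(\PP E_x,\cO(1)\otimes\pi^*G)$, taking $G$ to be the dualized remaining factors. Restricting $\cO(1)\otimes\pi^*G$ to a Hecke curve $\varphi^{-1}(m_0)$ with $m_0\in M_0^s$ gives a sum of negative line bundles (using $\cO(1)=\varphi^*\theta_0$, trivial on fibers, together with $\pi^*E_p|_{\varphi^{-1}(m_0)}\cong\cO(1)^{\oplus 2}$ and $\pi^*\theta|_{\varphi^{-1}(m_0)}\cong\cO(2)$ as in \cite[Proposition 3.4]{Narasimhan1}); hence $\varphi_*(\cO(1)\otimes\pi^*G)=0$ over $M_0^s$, and by torsion--freeness \cite[Corollary 1.9]{Hartshorne} the pushforward vanishes on all of $Z$. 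As in Proposition \ref{distinct} this reduces the computation to $\H^{i-1}(Z,\psi^*\theta_0\otimes R^1\widetilde{\varphi}_*\widetilde{\psi}^*\pi^*G)$, and one is left to check that $R\psi_*R^1\widetilde{\varphi}_*\widetilde{\psi}^*\pi^*G=0$. This gives $\H^i(M,-)=0$ for $i\leq 2g-2$ (an isomorphism onto the open complement for $i\leq 2g-3$ and an injection at $i=2g-2$), and since the two ranges overlap exactly when $g\geq 8$, all $i$ are covered.

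The main obstacle I anticipate is precisely this last identification and vanishing of $R^1\widetilde{\varphi}_*\widetilde{\psi}^*(\cO(1)\otimes\pi^*G)$ on $Z$. Unlike the self--dual case of $\ad E_z$ handled earlier, the relevant sheaf here is not self--dual, so by relative duality along $\widetilde{\varphi}$ one must recognize the descended pushforward on $M_0^s$ as a twist of a Poincar\'e/adjoint--type bundle $G$ and invoke the known vanishing of its cohomology, while carefully tracking the twists $\cO(a,b)$ of the restriction to each $\PP^{g-2}\times\PP^{g-2}$ fiber over $D=\psi^{-1}(\cK\setminus\cK_0)$ and redoing the Mayer--Vietoris argument on the nodal fibers $l_1\cup l_2$. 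Pinning down these twists, and verifying that the resulting sheaf on $\cK\setminus\cK_0$ has vanishing derived pushforward so that the Leray spectral sequence collapses, is the delicate part; the Sommese half is routine given the earlier propositions.
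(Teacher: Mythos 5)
Your Sommese half and the $g\geq 8$ bookkeeping match the paper, but the low-degree half contains a genuine gap, and it sits exactly at the step you flagged as ``delicate'': the reduction you state is false, not merely hard to verify. Working on $\PP E_x$ with the projection formula, the twist $\cO(1)=\varphi^*\theta_0$ is indeed trivial on Hecke curves, but the remaining factors do \emph{not} all restrict to $\cO(\pm 1)^{\oplus 2}$. The formula $\pi^*E_p|_{\varphi^{-1}(m_0)}\cong\cO(1)^{\oplus 2}$ from \cite[Proposition 3.4]{Narasimhan1} holds only for $p\neq x$; for the base point itself, the tautological sequence $0\to K\to \pi^*E_x\to\cO(1)\to 0$ together with $\cO(1)|_C\cong\cO_C$ and $\pi^*\theta|_C\cong\cO_C(2)$ forces $\pi^*E_x|_C\cong\cO\oplus\cO(2)$. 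Hence for $E_x\otimes E_x^*\otimes E_y^*$ your bundle restricts to a Hecke curve as $\cO(-1)^{\oplus 2}\oplus\cO(-3)^{\oplus 2}$, and for $E_x\otimes E_y^*\otimes E_z^*$ as $\cO(-2)^{\oplus 4}$. These are negative, so $\widetilde{\varphi}_*=0$ survives, but $R^1\widetilde{\varphi}_*$ is then \emph{generically nonzero}: a rank-$4$ sheaf on all of $Z$, not a sheaf supported on $D$. Since $\psi$ is birational, $R\psi_*R^1\widetilde{\varphi}_*\widetilde{\psi}^*(\cO(1)\otimes\pi^*G)$ cannot vanish, so the Leray collapse you invoke never happens; what you would actually need is the cohomology on $Z$ of this everywhere-nonzero rank-$4$ sheaf (by relative duality, the dual of a pushforward of a Poincar\'e-type bundle), a problem of the same order as the paper's Section 4 analysis of $\widetilde{\varphi}_*\widetilde{\psi}^*\pi^*\ad E_z$, not a routine check.

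The paper sidesteps this with two devices you are missing. For the triple products (second and third identities) it chooses a \emph{fourth} point $w\notin\{x,y,z\}$ and pulls back to $\PP E_w$ with no $\cO(1)$ twist: then every factor sits at a point different from $w$, each restricts to $\cO(\pm 1)^{\oplus 2}$ on Hecke curves, the total restriction is $\cO(-1)^{\oplus 8}$, and the proof of Proposition \ref{distinct} applies verbatim (both $\widetilde{\varphi}_*$ and, generically, $R^1\widetilde{\varphi}_*$ vanish, so $R^1$ is supported on $D$ and pushes forward to zero), giving vanishing for $i\leq 2g-2$. For $E_x^*\otimes E_y^*$ (first identity), where your mechanism is least applicable since there is no un-starred factor, the paper uses Serre duality: the isomorphism $E_x^*\otimes E_y^*\cong\omega_M\otimes E_x\otimes E_y$, which you write down but use only on the Sommese side, says the bundle is self-dual up to $\omega_M$, so $\H^i\cong(\H^{3g-3-i})^*$ and the Sommese vanishing for $i\geq g+3$ transfers to $i\leq 2g-6$. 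Keep your Sommese half, but replace your low-degree argument by the fourth-point trick for the triple products and by Serre duality for the double product.
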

\begin{proof}
If $i \geq g+3,$ then we have
$$ \H^i(M, E_x^* \otimes E_y^*) \cong \H^i(M, \omega_M \otimes E_x \otimes E_y) = 0 $$
from Sommese vanishing theorem. From Serre duality, we have the following isomorphism.
$$ \H^i(M, E_x^* \otimes E_y^*) \cong \H^{3g-3-i}(M, E_x^* \otimes E_y^*)^* = 0 $$ when $i \leq 2g-6.$
Therefore we have 
$$ \H^i(M, E_x^* \otimes E_y^*) = 0 $$
for all $i \in \ZZ$ when $g \geq 8.$ \\

Using Sommese vanishing theorem we have
$$ \H^i(M, E_x \otimes E_x^* \otimes E_y^*) \cong \H^i(M, \omega_M \otimes E_x \otimes E_x \otimes E_y) = 0 $$
for $i \geq g+7.$ \\

For small $i,$ let us choose a point $w \in X$ which is different from $x,y.$ The proof of Proposition \ref{distinct} implies that 
$$ \H^i(M, E_x \otimes E_x^* \otimes E_y^*) \cong \H^i(\PP E_w, \pi^*( E_x \otimes E^*_x \otimes E^*_y) ) = 0 $$
for $i \leq 2g-2.$ Therefore we have
$$ \H^i(M, E_x \otimes E_x^* \otimes E_y^*) = 0 $$
for all $i$ when $g \geq 8.$ \\

Simiarly, we have $\H^i(M, E_x \otimes E_y^* \otimes E_z^*) = 0$ for every $i \in \ZZ.$
\end{proof}

\begin{prop}
Let $g \geq 8.$ For any distinct pairs $x, y \in X,$ we have the following identity.
$$ \H^i( M, F_{x,x}^* ) = 0 $$
$$ \H^i( M, F_{x,x}^* \otimes E_x ) = 0 $$
$$ \H^i( M, F_{x,x}^* \otimes E_y ) = 0 $$
for every $i \in \ZZ.$
\end{prop}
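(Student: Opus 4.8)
The plan is to dualize the defining extension of $F_{x,x}$ and reduce each of the three statements to the cohomology of two graded pieces, which I then attack with Sommese's vanishing theorem (large $i$), Serre duality on $M$ (recall $\omega_M\cong\theta^{-2}$), and the Hecke/conic-bundle pushforward of Section~4 (small $i$). Since $\bigwedge^2 E_x\cong\theta$, dualizing $0\to\theta\to F_{x,x}\to\Sym^2 E_x\to 0$ gives
$$ 0\to\Sym^2 E_x^*\to F_{x,x}^*\to\theta^{-1}\to 0. $$
Tensoring this with $\cO_M$, $E_x$, or $E_y$ and passing to the long exact cohomology sequence, it suffices to show that $\H^i(M,\theta^{-1}\otimes G)=0$ and $\H^i(M,\Sym^2 E_x^*\otimes G)=0$ for all $i$, where $G\in\{\cO_M,E_x,E_y\}$.

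First I would dispose of the negative pieces $\theta^{-1}\otimes G$. For $G=\cO_M$ one has $\theta^{-1}\cong\omega_M\otimes\theta$, so $\H^i(M,\theta^{-1})=0$ for every $i$ by Kodaira vanishing together with the absence of sections. For $G=E_x$ or $E_y$, I would write $\theta^{-1}\otimes E_x\cong\omega_M\otimes(E_x\otimes\theta)$; as $E_x$ is $(g-1)$-ample and $\theta$ is ample, $E_x\otimes\theta$ is $(g-1)$-ample, and Sommese's theorem kills $\H^i$ for $i\geq g+1$, while Serre duality rewrites the remaining groups as $\H^{3g-3-i}(M,\omega_M\otimes E_x)$, which Sommese kills for $i\leq 2g-4$; these ranges cover all $i$ once $g\geq 4$. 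The self-dual piece $\Sym^2 E_x^*$ is handled purely by positivity: using $E_x^*\cong E_x\otimes\theta^{-1}$ one gets $\Sym^2 E_x^*\cong\omega_M\otimes\Sym^2 E_x$, a direct summand of $\omega_M\otimes(E_x\otimes E_x)$. Since $E_x\otimes E_x$ is $(g-1)$-ample of rank $4$, Sommese gives $\H^i(M,\Sym^2 E_x^*)=0$ for $i\geq g+3$; and since $(\Sym^2 E_x^*)^*\otimes\omega_M\cong\Sym^2 E_x^*$, Serre duality forces the same vanishing for $i\leq 2g-6$, so for $g\geq 8$ every $i$ is covered. Combined with $\H^\bullet(M,\theta^{-1})=0$, the long exact sequence already yields the first identity $\H^i(M,F_{x,x}^*)=0$.

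The substantive part is the two pieces $\Sym^2 E_x^*\otimes E_x$ and $\Sym^2 E_x^*\otimes E_y$, which are direct summands of the triple products $E_x^*\otimes E_x^*\otimes E_x$ and $E_x^*\otimes E_x^*\otimes E_y$. Here the factor $E_x^*$ destroys positivity, so Sommese (applied to $\omega_M\otimes E_x^{\otimes 3}$, rank $8$) only covers $i\geq g+7$, and Serre duality no longer helps. For the remaining range I would run the argument of Proposition~\ref{distinct}, but on the projective bundle $\PP E_w$ at an auxiliary point $w$ chosen distinct from $x$ (and from $y$). The crucial point is that for $w\neq x$ the restriction of $E_x$ to a Hecke conic $\varphi^{-1}(m_0)$ (with $m_0\in M^s_0$) is the \emph{balanced} bundle $\cO(1)^{\oplus 2}$ of $\theta$-degree $2$; hence both $\Sym^2 E_x^*\otimes E_x$ and $\Sym^2 E_x^*\otimes E_y$ restrict to $\cO(-1)^{\oplus 6}$ on the smooth conic fibers, exactly as the rank-$8$ products of Proposition~\ref{distinct} restrict to $\cO(-1)^{\oplus 8}$. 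Thus $\widetilde{\varphi}_*\widetilde{\psi}^*\pi^*(\,\cdot\,)=0$ and $R^1\widetilde{\varphi}_*\widetilde{\psi}^*\pi^*(\,\cdot\,)$ is supported on $D$, and, as in the proof of Proposition~\ref{distinct}, this gives $\H^i(M,\Sym^2 E_x^*\otimes G)\cong\H^i(\PP E_w,\pi^*(\,\cdot\,))=0$ for $i\leq 2g-2$; together with the Sommese range this covers all $i$ as soon as $g\geq 8$.

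The main obstacle I anticipate is the degenerate-fibre computation inside the last step: over $\cK\setminus\cK_0$ the fibres of $\widetilde{\varphi}$ are nodal conics $l_1\cup l_2$, and one must recompute the relevant Mayer--Vietoris sequence when the three characters $j_x^{\pm 1}$ entering it coincide (all three for $G=E_x$, two for $G=E_y$), checking that $R^1\widetilde{\varphi}_*\widetilde{\psi}^*\pi^*(\,\cdot\,)$ restricts on each $\PP^{g-2}\times\PP^{g-2}$ to line bundles that are acyclic there, as the summands $\cO(0,-1)\oplus\cO(-1,0)$ are in Proposition~\ref{distinct}; granting this, its derived pushforward to $\cK\setminus\cK_0$ vanishes and the argument goes through. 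This is precisely where the non-hyperelliptic hypothesis and the desingularization of \cite{NR;78} are used. With the two graded pieces vanishing for each $G$, the long exact sequences of the dualized extension give $\H^i(M,F_{x,x}^*)=\H^i(M,F_{x,x}^*\otimes E_x)=\H^i(M,F_{x,x}^*\otimes E_y)=0$ for all $i$, as required.
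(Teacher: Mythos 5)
Your proposal is correct and follows essentially the same route as the paper: dualize the defining extension of $F_{x,x}$, dispose of the wholly negative pieces by Sommese vanishing combined with Serre duality, and handle the mixed pieces coming from $E_x^*\otimes E_x^*\otimes E_x$ and $E_x^*\otimes E_x^*\otimes E_y$ by Sommese for $i\geq g+7$ together with the conic-bundle argument of Proposition~\ref{distinct} on an auxiliary $\PP E_w$ with $w\neq x,y$ for $i\leq 2g-2$, which is exactly how the paper proceeds (and the degenerate-fibre Mayer--Vietoris check you flag is already covered there, since that computation only requires $y,z,w$ distinct from the base point, not from each other). The only cosmetic difference is that you treat the $\bigwedge^2E_x^*\cong\theta^{-1}$ piece separately via Kodaira vanishing and Serre duality, whereas the paper deduces both graded pieces simultaneously from the vanishing of the full tensor products.
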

\begin{proof}
As in the previous Proposition, we have
$$ \H^i(M,E_x^* \otimes E_x^*)=0 $$
for all $i$ when $g \geq 8.$ Therefore we have $\H^i(M,\Sym^2E^*_x) = \H^i(M,\bigwedge^2E^*_x)=0$ for all $i.$ \\

From the exact sequence
$$ 0 \to \bigwedge^2E_x \to F_{x,x} \to \Sym^2E_x \to 0 $$
we have the following exact sequence.
$$ 0 \to \Sym^2E^*_x \to F^*_{x,x} \to \bigwedge^2E^*_x \to 0 $$

Therefore we have $\H^i( M, F_{x,x}^* ) = 0$ for all $i.$ \\

Using Sommese vanishing theorem we have
$$ \H^i(M, E_x \otimes E_x^* \otimes E_x^*) \cong \H^i(M, \omega_M \otimes E_x \otimes E_x \otimes E_x) = 0 $$
$$ \H^i(M, E_y \otimes E_x^* \otimes E_x^*) \cong \H^i(M, \omega_M \otimes E_y \otimes E_x \otimes E_x) = 0 $$
for $i \geq g+7.$ \\

For small $i,$ let us choose a point $w \in X$ which is different from $x,y,z.$ The proof of Proposition \ref{distinct} implies that 
$$ \H^i(M, E_x \otimes E_y^* \otimes E_z^*) \cong \H^i(\PP E_w, \pi^*( E_x \otimes E^*_y \otimes E^*_z) ) = 0 $$
for $i \leq 2g-2.$ Therefore we have
$$ \H^i(M, E_x \otimes E_x^* \otimes E_x^*) = 0 $$
and 
$$ \H^i(M, E_y \otimes E_x^* \otimes E_x^*) = 0 $$
for all $i$ when $g \geq 8.$ From these vanishing we have $\H^i( M, F_{x,x}^* \otimes E_x ) = \H^i( M, F_{x,x}^* \otimes E_y ) = 0$ for all $i \in \ZZ.$
\end{proof}

From the above discussions, we obtain the following result.

\begin{theo}
The derived category of $M$ has a semiorthogonal decomposition $\D(M) = \langle \cA, \cB \rangle$ where $\cA$ is equivalent to $ \langle \D(pt), \D(X), \D(X_2) \rangle $ when $X$ is a non-hyperelliptic curve with genus $g \geq 16.$
\end{theo}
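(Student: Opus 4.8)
The plan is to assemble the three fully faithful functors already at our disposal and to verify that, in the order $\langle \D(pt), \D(X), \D(X_2)\rangle$, their images are mutually semiorthogonal; admissibility then splits off the complement $\cB$. The embedding $\D(pt)\hookrightarrow\D(M)$ sends the point to $\cO_M$, which is exceptional since $M$ is Fano and hence $\H^i(M,\cO_M)=0$ for $i>0$. The embedding $\D(X)\hookrightarrow\D(M)$ is the Fourier--Mukai functor $\Phi_E$ of \cite{Narasimhan1, Narasimhan2}, with $\Phi_E(\CC(z))\cong E_z$, and $\D(X_2)\hookrightarrow\D(M)$ is the functor $\Phi_F$ shown to be fully faithful in the previous section, with $\Phi_F(\CC((x,y)))\cong F_{x,y}$. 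Each is the image of the derived category of a smooth projective variety under a Fourier--Mukai functor with a genuine kernel, so each is an admissible subcategory of $\D(M)$.

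First I would reduce the semiorthogonality to the vanishing of $\Ext^\bullet_M$ between the three families of point objects. For two Fourier--Mukai functors the composite $\Phi_F^R\circ\Phi_E$ is again a Fourier--Mukai functor $\D(X)\to\D(X_2)$, whose kernel on $X\times X_2$ has fibres controlled by $\Ext^\bullet_M(F_{x,y},E_z)$; when these vanish for every $z$ and $(x,y)$ the composite kernel is zero, so $\Phi_F(\D(X_2))$ and $\Phi_E(\D(X))$ are semiorthogonal, and similarly with $\cO_M$ in place of an $E_z$. With the convention that $\langle\cA_1,\cA_2,\cA_3\rangle$ requires $\Hom(\cA_i,\cA_j)=0$ for $i>j$, it therefore suffices to establish, for all $z\in X$ and $(x,y)\in X_2$: (i) $\Ext^\bullet_M(E_z,\cO_M)=\H^\bullet(M,E_z^*)=0$; (ii) $\Ext^\bullet_M(F_{x,y},\cO_M)=\H^\bullet(M,F_{x,y}^*)=0$; and (iii) $\Ext^\bullet_M(F_{x,y},E_z)=\H^\bullet(M,F_{x,y}^*\otimes E_z)=0$.

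Statement (i) is precisely the semiorthogonality of the pair $\langle\D(pt),\D(X)\rangle$ constructed in \cite{Narasimhan1,Narasimhan2}, which I would cite. For (ii) and (iii) I would separate the off-diagonal and diagonal cases. If $x\neq y$ then $F_{x,y}\cong E_x\otimes E_y$, so (ii) reads $\H^\bullet(M,E_x^*\otimes E_y^*)=0$ and (iii) reads $\H^\bullet(M,E_z\otimes E_x^*\otimes E_y^*)=0$; both occur among the three identities of the first of the two preceding Propositions (the case $z\notin\{x,y\}$ is $\H^i(M,E_x\otimes E_y^*\otimes E_z^*)=0$, and the case $z\in\{x,y\}$ is $\H^i(M,E_x\otimes E_x^*\otimes E_y^*)=0$), which are proved by the method of Proposition \ref{distinct}. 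If $x=y$ then (ii) and (iii) are exactly the three vanishings $\H^i(M,F_{x,x}^*)=0$, $\H^i(M,F_{x,x}^*\otimes E_x)=0$ and $\H^i(M,F_{x,x}^*\otimes E_y)=0$ of the second of the two preceding Propositions. All of these already hold for $g\geq 8$; the hypotheses $g\geq 16$ and non-hyperelliptic enter only through the fully faithfulness of $\Phi_F$.

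Once the semiorthogonality is in hand, the subcategory $\cA\subset\D(M)$ generated by the three admissible components is itself admissible, and splitting off its orthogonal complement $\cB$ yields $\D(M)=\langle\cA,\cB\rangle$ with $\cA\simeq\langle\D(pt),\D(X),\D(X_2)\rangle$, as claimed. The genuinely hard input --- the cohomology of tensor products of the Poincar\'e bundle and its adjoint, via the conic-bundle desingularization $\widetilde\varphi:\cC\to Z$ --- has already been carried out in the previous sections, so the remaining work is organizational. I expect the point-object reduction to be the only step needing real care: one must check that the fibrewise $\Ext$-vanishing genuinely forces the composed Fourier--Mukai kernel to vanish, which uses flatness of the relevant families over $X$, $X_2$ and $M$ together with the fact that the point objects generate the respective subcategories. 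This is standard, so I anticipate no new obstacle beyond bookkeeping.
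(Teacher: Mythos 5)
Your proposal is correct and follows essentially the same route as the paper: cite the semiorthogonal pair $\langle \cO_M, \Phi_E(\D(X))\rangle$ from \cite{Narasimhan1, Narasimhan2}, use the two vanishing propositions (which are exactly the point-object $\Ext$-vanishings $\Ext^\bullet_M(F_{x,y},\cO_M)=\Ext^\bullet_M(F_{x,y},E_z)=0$) to place that component in $\Phi_F(\D(X_2))^{\perp}$, and conclude by admissibility. The only difference is that you spell out the reduction from point objects to full subcategories via the composed Fourier--Mukai kernel and the admissibility of the images, steps the paper leaves implicit.
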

\begin{proof}
From \cite{Narasimhan1, Narasimhan2}, we see that $\D(M)$ has a semiorthogonal decomposition having the following component.
$$ \langle \cO, \Phi_E(\D(X)) \rangle $$
Then the above propositions show that the above component lies on $\langle \Phi_F( \D(X_2)) \rangle^{\perp}.$ Therefore we obtain the desired conclusion.
\end{proof}

\bigskip

\bibliographystyle{amsplain}

\end{document}